\def\Z{{\mathbb{Z}}}
\def\N{{\mathbb{N}}}
\def\rN{{\mathrm{N}}}
\def\Q{{\mathbb{Q}}} 
\def\R{{\mathbb{R}}} 
\def\B{{\bf{B}}}
\def\fa{{\mathfrak{a}}} 
\def\fb{{\mathfrak{b}}}
\def\m{{\mathfrak{m}}}
\def\vt{{\vec{t}}}
\def\J{{\mathcal{J}}} 
\def\sO{{\mathcal{O}}}
\def\sL{{\mathscr{L}}}
\def\D{{\Delta}}
\def\e{{\mathrm{e}}}
\newcommand{\cen}{c}
\def\Hom{{\mathrm{Hom}}} 
\def\SHom{{\mathscr{H} \kern -.5pt om}}
\def\Spec{{\mathrm{Spec\; }}}
\def\Div{{\mathrm{div}}}
\def\ord{{\mathrm{ord}}} 
\def\mult{{\mathrm{mult}}}
\def\Tr{{\mathrm{Tr}}}
\def\Ram{{\mathrm{Ram}}}
\def\Supp{{\mathrm{Supp}}}
\def\NNA{{\Gamma}}
\def\NNef{{\mathrm{NNef}}}
\def\Frac{{\mathrm{Frac}}}
\def\num{{\mathrm{num}}}
\theoremstyle{plain}
\newtheorem{thm}{Theorem}[section] 
\newtheorem{cor}[thm]{Corollary}
\newtheorem{prop}[thm]{Proposition}
\newtheorem{conj}[thm]{Conjecture}
\newtheorem*{mainthm}{Main Theorem}
\newtheorem{lem}[thm]{Lemma}
\theoremstyle{definition} 
\newtheorem{defn}[thm]{Definition}
\newtheorem{propdef}[thm]{Proposition-Definition} 
\newtheorem{eg}[thm]{Example} 
\theoremstyle{remark}
\newtheorem{rem}[thm]{Remark}
\newtheorem{setup}[thm]{Set up}
\newtheorem*{cl}{Claim}
\newtheorem*{acknowledgement}{Acknowledgments}
\title{Stability of test ideals of divisors with small multiplicity}
\author{Kenta Sato}
\address{Graduate School of Mathematical Science, the University of Tokyo, 3-8-1 Komaba, Meguro-ku, Tokyo 153-8914, Japan}
\email{ktsato@ms.u-tokyo.ac.jp}
\begin{document}
\tolerance = 9999

\maketitle
\markboth{KENTA SATO}{STABILITY OF TEST IDEALS OF DIVISORS WITH SMALL MULTIPLICITY}

\begin{abstract}
Let $(X, \Delta)$ be a log pair in characteristic $p>0$ and $P$ be a (not necessarily closed) point of $X$.
We show that there exists a constant $\delta>0$ such that 
the test ideal $\tau(X, \Delta)$, a characteristic $p$ analogue of a multiplier ideal, does not change at $P$ under the perturbation of $\Delta$ by any $\R$-divisor with multiplicity less than $\delta$.
As an application, we prove that if $D$ is an $\mathbb{R}$-Cartier $\mathbb{R}$-divisor on a strongly $F$-regular projective variety,
then the non-nef locus of $D$ coincides with the restricted base locus of $D$.
This is a generalization of a result of Musta\c{t}\v{a} to the singular case and can be viewed as a characteristic $p$ analogue of a result of Cacciola--Di Biagio.
\end{abstract}

\setcounter{tocdepth}{1}


\section{Introduction}
A scheme $X$ of positive characteristic is called \emph{$F$-finite} if the Frobenius morphism $F:X \to X$ is a finite morphism.
Let $X$ be an $F$-finite Noetherian normal scheme and $\D$ be an effective $\R$-Weil divisor on $X$.
The \emph{test ideal} $\tau(X, \D)$, which is defined in terms of the Frobenius morphism as in \cite{Tak}, satisfies several important properties similar to those of a multiplier ideal.
One such property is the stability of test ideals under small perturbations: 
for an effective $\R$-Cartier $\R$-divisor $D$ on $X$ and a (not necessarily closed) point $P \in \Supp(D)$, there exists a constant $c=c(X, \D, P ;D)>0$ such that
\begin{eqnarray*}
\tau(X, \D+ \epsilon D)_P  =  \tau(X, \D)_P  &  & (\textup{if }0 \le \epsilon < c) \\
\tau(X, \D+ \epsilon D)_P  \subsetneq  \tau(X, \D)_P  & & (\textup{if }c \le \epsilon).
\end{eqnarray*}
Such a constant $c$ is called the \emph{smallest $F$-jumping number} of $D$ with respect to $(X, \D, P)$ and encodes much information on the singularities of $D$.

As another invariant which measures the singularities of $D$, we consider the \emph{multiplicity} $\mult_P(D)$ of $D$ at $P$ (see Definition \ref{multiplicity}).
In characteristic $0$, it is known that the multiplier ideal $\J(V, B)$ is uniformly stable under the perturbation of $B$ by any $\R$-divisor with small multiplicity.
For example, if $x \in V$ is a point of a smooth complex variety $V$ and $B$ is an effective $\R$-divisor with $\mult_x(B)<1$,
then the multiplier ideal $\J(V,B)$ is trivial at $x$ (\cite[Proposition 9.5.13]{Laz}).
A more general statement was proved by Li (\cite[Second Proof of Proposition 2.3]{Li}).
Motivated by those results, we prove an analogous statement in positive characteristic using test ideals instead of multiplier ideals.

\begin{mainthm}[\textup{Corollary \ref{main'''}}]
	Let $X$ be an $F$-finite Noetherian normal scheme, $\D$ be an $\R$-Weil divisor on $X$, and $P$ be a point of $X$.
	Then there exists $\delta>0$ such that
        $\tau(X, \D+ E)_P=\tau(X, \D)_P$ 
        for every effective $\R$-Cartier $\R$-divisor $E$ on $X$ with $\mult_P(E)< \delta$.
\end{mainthm}

The proof is based on a description of test ideals in terms of the trace map for the Frobenius morphism and a perturbation trick (see Section 3 for details).

We use the main theorem to give a partial answer to a conjecture on the non-nef locus of an $\R$-Cartier $\R$-divisor on a projective variety.
Let $X$ be a normal projective variety over an arbitrary field and $D$ be an $\R$-Cartier $\R$-divisor on $X$.
The \emph{non-nef locus} $\NNef(D)$ is defined in terms of the numerical vanishing orders of $D$ along divisorial valuations (cf. Definition \ref{v_{num}} and \ref{NNef} below).
As the name suggests, $\NNef(D) = \emptyset$ if and only if $D$ is nef.
On the other hand, the \emph{restricted base locus} $\B_-(D)$ is defined as a lower approximation of the stable base locus $\B(D) \coloneqq \bigcap_{E \sim_{\Q} D, E \ge 0} \Supp(E)$ obtained by perturbations of $D$ by small ample divisors.
It has the same property as the non-nef locus, that is, $\B_-(D) = \emptyset$ if and only if $D$ is nef.
Although the definitions of $\B_-(D)$ and $\NNef(D)$ seem to be far different, the following was conjectured.

\begin{conj}[\textup{\cite[Conjecture 2.7]{BBP}}]\label{conj}
	Let $X$ be a normal projective variety over an algebraically closed field and $D$ be an $\R$-Cartier $\R$-divisor on $X$.
        Then we have
	\[\NNef(D)=\B_-(D).\]
\end{conj}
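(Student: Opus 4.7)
Since the statement is posed as a conjecture in full generality and the abstract announces its resolution in the strongly $F$-regular projective case in characteristic $p$, I aim at that case. The two inclusions are of quite different natures.

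The inclusion $\B_-(D) \supseteq \NNef(D)$ is essentially formal. If $v$ is a divisorial valuation centered at $P$ with $v_{\num}(D)>0$ then, by the limiting definition of $v_{\num}$, for every ample $A$ and sufficiently small $\epsilon>0$ every effective $\R$-divisor numerically equivalent to $D+\epsilon A$ has strictly positive $v$-value; such a divisor contains the center of $v$, so $P\in \B(D+\epsilon A)$ and hence $P\in \B_-(D)$. Strong $F$-regularity is not used here.

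The other inclusion is where the Main Theorem enters. Argue contrapositively: assume $P\notin \B_-(D)$, fix an ample $A$, and for each sufficiently divisible $m$ pick an effective $E_m \sim_{\Q} m(D+\epsilon A)$ with $P\notin \Supp(E_m)$. Because $X$ is strongly $F$-regular, $\tau(X)_P=\sO_{X,P}$; the Main Theorem applied to $(X,0)$ at $P$ produces $\delta>0$ with the property that adding any effective $\R$-Cartier divisor of multiplicity $<\delta$ at $P$ does not alter the test ideal at $P$. The plan is to combine the linearly-equivalent data $E_m$ with the numerical freedom from $\delta$ so as to deduce that the asymptotic test ideal $\tau(\|D+\epsilon A\|)$ equals $\sO_{X,P}$, and then translate this into the vanishing $v_{\num}(D+\epsilon A)=0$ for every divisorial valuation $v$ centered at $P$. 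Letting $\epsilon\to 0$ yields $v_{\num}(D)=0$ for every such $v$, so $P\notin \NNef(D)$.

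The main obstacle is precisely this bridge between linear and numerical equivalence classes in the singular $F$-regular setting: $\B_-(D)$ is controlled by linear systems, while $\NNef(D)$ is defined via numerical vanishing orders, and on a smooth variety Musta\c{t}\v{a}'s Skoda-type relation between asymptotic test ideals and divisorial valuations handles this bridge automatically. On a strongly $F$-regular but not regular variety, the analogous statement is not automatic, and the ``small multiplicity'' slack of the Main Theorem is exactly what is needed to absorb the error terms coming from a birational extraction of each divisorial valuation $v$ centered at $P$: extracting $v$ and pulling back changes the test ideal by a controlled perturbation whose multiplicity at $P$ can be made arbitrarily small, and the Main Theorem guarantees this does not shift the answer. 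A secondary technical issue is the Bertini-type step producing $E_m\in |m(D+\epsilon A)|$ with $P\notin \Supp(E_m)$ on a singular base; this should follow from standard arguments once $P\notin \B_-(D)$ is unpacked.
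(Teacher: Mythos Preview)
Your argument is logically reversed. After correctly noting that $\NNef(D)\subseteq\B_-(D)$ is formal, you set out to prove ``the other inclusion'' $\B_-(D)\subseteq\NNef(D)$, whose contrapositive is $P\notin\NNef(D)\Rightarrow P\notin\B_-(D)$. But you then \emph{assume} $P\notin\B_-(D)$ and work towards $P\notin\NNef(D)$: that is the easy direction again. In fact your chain ``$P\notin\B_-(D)\Rightarrow$ choose $E_m$ missing $P\Rightarrow\tau(\|D+\epsilon A\|)_P=\sO_{X,P}\Rightarrow v_{\num}=0$'' does not need the Main Theorem at all: once $P\notin\Supp(E_m)$ the test ideal is trivially $\sO_{X,P}$, and the passage from trivial asymptotic test ideal to $v(\fa_\bullet)=0$ is the elementary implication (via the containment $\tau\subseteq\J$ and a direct valuation estimate), not the hard one.

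The genuine content you are missing is the step that produces sections from local triviality of test ideals. The paper's route for the hard direction is: if $P\notin\NNef(D)$ then $\ord_P(\fa_\bullet(D))=0$, and the Main Theorem (in its graded-sequence form) forces $\tau(X,m\|D\|)_P=\sO_{X,P}$ for every $m$; this is where the ``small multiplicity'' slack is actually used. One then needs a \emph{uniform global generation} statement: there is a fixed ample $A$ such that $\tau(X,m\|D\|)\otimes\sO_X(mD+A)$ is globally generated for all $m$ (proved via the trace description of test ideals together with Fujita vanishing and Castelnuovo--Mumford regularity, following Musta\c{t}\v{a}). Combining the two gives global generation of $\sO_X(mD+A)$ at $P$ for all $m$, hence $P\notin\B_-(D)$. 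Your proposal never invokes this global generation input, and the ``birational extraction of $v$ with controlled perturbation'' paragraph does not substitute for it; without a mechanism to manufacture global sections, there is no way to pass from local test-ideal information back to base loci.
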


In characteristic zero, Conjecture \ref{conj} was proved by Nakayama \cite[V, Lemma 1.9 (1)]{Nak} if $X$ is smooth and recently by Cacciola--Di Biagio \cite[Corollary 4.9]{CdB} if $X$ has only klt singularities.
In positive characteristic, Musta\c{t}\v{a} \cite{Mus} proved the smooth case, 
but his proof heavily depends on the subadditivity property of test ideals, which can fail on singular varieties.
In this paper, assuming that $X$ has strongly $F$-regular singularities instead of klt singularities, 
we prove a characteristic $p$ analogue of the result of Cacciola--Di Biagio.

\begin{thm}[\textup{Corollary \ref{Nakayama}}]\label{Nak}
	Let $X$ be a normal projective variety over an $F$-finite field.
        If the non-strongly $F$-regular locus of $X$ has dimension at most zero, then Conjecture \ref{conj} holds.
        That is,
	\[\B_-(D)= \NNef(D)\]
        for every $\R$-Cartier $\R$-divisor $D$ on $X$.
\end{thm}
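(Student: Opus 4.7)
The plan is to prove the two inclusions $\NNef(D) \subseteq \B_-(D)$ and $\B_-(D) \subseteq \NNef(D)$ separately; the first is formal and holds on any normal projective variety, while the second is the substantive content and is where the Main Theorem will enter.

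For the easy inclusion $\NNef(D) \subseteq \B_-(D)$, I would argue by contrapositive directly from the definitions: if $P \notin \B_-(D)$, then for every ample $\R$-divisor $A$ and every sufficiently small $\epsilon>0$ there is an effective $\R$-divisor $D_\epsilon \sim_{\R} D+\epsilon A$ with $P \notin \Supp(D_\epsilon)$. Then $v(D_\epsilon)=0$ for every divisorial valuation $v$ whose center contains $P$, so $v_{\num}(\|D+\epsilon A\|)=0$ for all such $v$. Letting $\epsilon \to 0^+$ in the defining limit of $v_{\num}(\|D\|)$ yields $v_{\num}(\|D\|)=0$ for every divisorial valuation centered through $P$, hence $P \notin \NNef(D)$.

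For the harder inclusion $\B_-(D) \subseteq \NNef(D)$, I would take $P \notin \NNef(D)$ and, for every ample $A$, construct an effective $\R$-divisor $\R$-equivalent to $D+A$ (up to an arbitrarily small correction) whose support avoids $P$. The argument proceeds in three steps. \emph{(i) Test-ideal reformulation of $\NNef$.} For $D'$ big, define the asymptotic test ideal $\tau(X, \|D'\|)$ as the stable value of $\tau(X, \tfrac{1}{m}|mD'|)$, and extend to pseudo-effective classes by $\tau(X, \|D\|)=\lim_{\epsilon \to 0^+}\tau(X, \|D+\epsilon A\|)$; using Izumi-type comparisons between multiplicity and divisorial valuations, I would show that the cosupport of $\tau(X, \|D\|)$ is contained in $\NNef(D)$, so $\tau(X, \|D\|)_P = \sO_{X,P}$. \emph{(ii) Stability.} The Main Theorem applied at $P$ yields $\delta>0$ such that $\tau(X, \|D\|+E)_P = \sO_{X,P}$ for every effective $\R$-Cartier $\R$-divisor $E$ with $\mult_P(E)<\delta$; choosing $\epsilon>0$ small and an effective representative $A' \sim_{\R} \epsilon A$ with $\mult_P(A')<\delta$, this upgrades to $\tau(X, \|D+\epsilon A\|)_P = \sO_{X,P}$. \emph{(iii) Global generation.} Since the non-strongly $F$-regular locus of $X$ is zero-dimensional, a Fujita-type global generation statement for asymptotic test ideals should apply: for a sufficiently ample Cartier divisor $H$ and $m$ sufficiently divisible, $\tau(X, \|m(D+\epsilon A)\|)\otimes \sO_X(m(D+\epsilon A)+H)$ is globally generated at $P$. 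Triviality of the test ideal at $P$ then produces a section nonvanishing at $P$, i.e.\ an effective $\Q$-divisor $\R$-equivalent to $D+\epsilon A+\tfrac{1}{m}H$ avoiding $P$; sending $m \to \infty$ and $\epsilon \to 0^+$ gives $P \notin \B_-(D)$.

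The main obstacle is step (i), and especially the nontrivial implication that $\tau(X, \|D\|)_P$ proper forces some divisorial valuation centered through $P$ to have positive numerical vanishing order for $D$: one must extract such a valuation from purely test-ideal data, which is the characteristic-$p$ replacement for the Nadel-vanishing argument used by Cacciola--Di Biagio in the klt setting. A secondary hurdle is step (iii): establishing global generation for asymptotic test ideals under only the hypothesis that the non-strongly-$F$-regular locus is zero-dimensional. The finitely many bad closed points should be absorbable by a local-to-global $F$-adjoint argument near those points combined with Serre-type vanishing on the strongly $F$-regular open locus.
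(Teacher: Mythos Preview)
Your step (i) is false as stated, and this is exactly where the argument collapses at the non-strongly-$F$-regular points. For any $P\in Z(\tau(X))$ one has $\tau(X,m\|D\|)_P\subseteq\tau(X)_P\subsetneq\sO_{X,P}$ for every $m$ and every $D$; so the cosupport of $\tau(X,m\|D\|)$ always contains $Z(\tau(X))$, and in particular it is \emph{not} contained in $\NNef(D)$ (take $D$ ample, so $\NNef(D)=\emptyset$). Consequently your step (iii) cannot rescue the argument at such $P$: even with perfect global generation of $\tau(X,m\|D\|)\otimes\sO_X(mD+H)$ at $P$, this only produces sections of $\sO_X(mD+H)$ lying in a \emph{proper} ideal at $P$, which says nothing about $P\notin\B_-(D)$. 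The ``local-to-global $F$-adjoint'' patch you propose is aimed at the wrong target: the global generation statement (Proposition~\ref{grob}) already holds on all of $X$ with no $F$-regularity hypothesis; the zero-dimensionality of $Z(\tau(X))$ is not used there at all.

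The paper resolves the bad points by a completely different device. One first proves $\NNef(D)\setminus Z(\tau(X))=\B_-(D)\setminus Z(\tau(X))$ (this is your steps (i)+(iii) restricted to strongly $F$-regular points, and here the Main Theorem enters via Corollary~\ref{main''}/Proposition~\ref{asymp jump}; no separate ``extract a valuation'' step is needed, since $\tau(\,\cdot\,)_P\neq\tau(X,\D)_P$ forces $\ord_P(\fa_\bullet)>0$, and then \emph{every} $\m_P$-valuation is positive by the trivial inequality $\ord_P\le v$). For the remaining finitely many non-$F$-regular closed points, one uses that $\B_-(D)=\bigcup_n\B(D+A_n)$ and that each stable base locus $\B(D+A_n)$ has no isolated points (\cite[Proposition~1.1]{ELMNP2}). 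Thus any $P\in\B_-(D)$ lies on a positive-dimensional irreducible component of some $\B(D+A_n)$; its generic point is outside the zero-dimensional set $Z(\tau(X))$ and therefore lies in $\NNef(D)$ by the equality on the $F$-regular locus, whence $P\in\NNef(D)$ because $\NNef(D)$ is a union of closed centers and hence stable under specialization. This ``no isolated points'' trick is the missing idea in your proposal. (Incidentally, your step (ii) is superfluous: $\NNef(D)=\bigcup_n\NNef(D+A_n)$, so $P\notin\NNef(D)$ already gives $P\notin\NNef(D+A_n)$ for all $n$ without any perturbation argument.)
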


Theorem \ref{Nak} is obtained from the main theorem as follows.
For simplicity, we assume that $X$ is $\Q$-Gorenstein strongly $F$-regular and $D$ is a big $\Q$-Cartier $\Q$-divisor.
Suppose that a point $x \in X$ is not contained in the non-nef locus $\NNef(D)$.
It then follows from the main theorem that the asymptotic test ideal $\tau(X, m \cdot ||D||)$ is trivial at $x$ for all $m>0$.
On the other hand, there exists an ample divisor $A$ on $X$ such that $\tau(X, m \cdot ||D||) \otimes \sO_{X}(mD+A)$ is globally generated for all $m>0$ (see Proposition \ref{grob} for the precise statement).
Therefore, $\sO_{X}(mD+A)$ is globally generated at $x$, which implies $x \not\in \B_{-}(D)$.

\begin{small}
\begin{acknowledgement}
The author wishes to express his gratitude to his supervisor Professor Shunsuke Takagi for his encouragement, valuable advice and suggestions. 
He is grateful to Doctor Sho Ejiri for his encouragement.
He is also grateful to an anonymous referee for many useful suggestions and for pointing out many typos.
This work was supported by the Program for Leading Graduate Schools, MEXT, Japan.
\end{acknowledgement}
\end{small}

\section{Preliminaries}

In this section, we recall definitions and basic facts which we will need later.

\subsection{Trace maps}\label{Trace map}
In this subsection, we recall the trace maps for alterations. 
The reader is referred to \cite{BST} for details.

Let $X,Y$ be finite dimensional Noetherian normal connected schemes and $f:Y \to X$ be an alteration, that is, a generically finite proper dominant morphism.
Assume that $X$ has a canonical module $\omega_X$.
In this situation, $Y$ also has a canonical module $\omega_Y$.

\begin{rem}\label{f^!}
Although canonical modules on $Y$ are not unique, there is a canonical choice of $\omega_Y$ determined by $f$ and $\omega_X$ (\cite[p.7]{BST}).
\end{rem}

Let $\omega_Y$ be as in Remark \ref{f^!}.
There exists a \emph{trace map} $\Tr_f: f_*\omega_Y \to \omega_X$.
Trace maps are compatible with composition of alterations (\cite[Lemma 2.3]{BST}) and when $f$ is birational or finite, $\Tr_f$ is characterized by the example below.

\begin{eg}[\textup{\cite[p.4]{BST}}]\label{tr}
Let $f: Y\to X$ be an alteration of finite dimensional Noetherian normal connected schemes, 
$\omega_X$ be a canonical module on $X$, $K_X$ be a canonical divisor on $X$ such that $\omega_X \simeq \sO_{X}(K_X)$, and $\omega_Y$ be as in Remark \ref{f^!}.
\begin{enumerate}
        \item Assume that $f$ is birational. 
        There is a unique canonical divisor $K_Y$ on $Y$ which coincides with $K_X$ on the locus where $f$ is isomorphism.
	Then there is a commutative diagram:
        \[
        \begin{CD}
        f_*\omega_Y @> \sim >> f_*\sO_Y(K_Y) @. \subseteq f_*K(Y) \\
        @V \Tr_f VV @. @| .\\
        \omega_X @ > \sim>> \sO_X(K_X) @. \subseteq K(X) \\
        \end{CD}
        \]
        \item Assume that $f$ is finite and surjective.
        Then $\omega_Y=f^{-1}\SHom_X(f_*\sO_Y, \omega_X)$ and $\Tr_f$ is the 
evaluation at $1$ map.
        Furthermore, there exists an isomorphism $f_* \sO_Y \xrightarrow{\sim} \SHom_X(f_* \omega_Y, \omega_X)$ which sends $1$ to $\Tr_f$.
        \item Assume that $f$ is finite and generically separable.
        We can define Weil divisors $\Ram_f$ and $f^*(K_X)$ on $Y$ as in \cite[Definition 4.5]{ST1}. 
        Then $K_Y=f^*(K_X)+\Ram_f$ is a canonical divisor on $Y$ such that $\sO_Y(K_Y) \simeq \omega_Y$ and there exists a commutative diagram
        \[
	\begin{CD}
        f_*\omega_Y @> \sim >> f_*\sO_Y(K_Y) @. \subseteq f_*K(Y) \\
        @V \Tr_f VV @. @V \Tr_{K(Y)/K(X)}VV ,\\
        \omega_X @ > \sim >> \sO_X(K_X) @. \subseteq K(X) \\
        \end{CD}
        \]
        where $\Tr_{K(Y)/K(X)}$ is the trace map of the field extension.
\end{enumerate}
\end{eg}

A scheme $X$ of positive characteristic is called \emph{$F$-finite} if the Frobenius morphism $F:X \to X$ is a finite morphism.
A ring $R$ is called \emph{$F$-finite} if $\Spec R$ is $F$-finite.
An $F$-finite Noetherian affine scheme is excellent and finite dimensional (\cite{Kun}), and has a canonical module (\cite[Remark13.6]{Gab}).
If $X$ is F-finite and $F^!\omega_X \cong \omega_X$, then we can define $\Tr_{F^e_X}:F^e_*\omega_X \to \omega_X$ for every $e \ge 1$.
In this case, if we fix a canonical divisor $K_X$ on $X$, there exists a $K(X)$-homomorphism $\rho: F^e_*K(X) \to K(X)$ such that
\[
\begin{CD}
        F^e_*\omega_X @>\sim>> F^e_*\sO_X(K_X) @. \subseteq F^e_*K(X) \\
        @V \Tr_{F^e} VV @. @V \rho VV .\\
        \omega_X @ >\sim>> \sO_X(K_X) @. \subseteq K(X) \\
        \end{CD}
\]
We denote this $\rho$ by $\Tr_{F^e_X,K_X}$.
When $X=\Spec R$, we also denote this $\rho$ by $\Tr_{F^e_R, K_R}$.

\begin{rem}\hfill
\begin{enumerate}
	\item If $X$ is of finite type over the spectrum of a local ring, then the condition $F^!\omega_X \cong \omega_X$ holds (see \cite[Example 2.15]{BST}).
        \item Although $\Tr_{F^e_X, K_X}: F^e_*K(X) \to K(X)$ is not uniquely determined, it is unique up to multiplication by $H^0(X,F^e_*\sO_X^\times)$.
        When we use this map, we only need the information about the image of this map as in Proposition \ref{phi principal} below.
        Hence we ignore the multiplication by $H^0(X, F^e_*\sO_X^\times)$.
\end{enumerate}
\end{rem}

\begin{prop}[\textup{\cite[Corollary 5.5]{ST1}}]\label{Tr on fin}
	Suppose that $X$ and $Y$ are Noetherian normal connected $F$-finite schemes with $\omega_X \cong F^! \omega_X$ and $\omega_Y \cong F^! \omega_Y$.
        Let $f:Y \to X$ be a finite dominant generically separable morphism, and $K_X,K_Y$ be canonical modules as in \textup{Example \ref{tr}} $(3)$.
        Then for every $e>0$, we have the following commutative diagram:
	\[
	\begin{CD}
        F^e_*K(X) @>\subseteq>> F^e_*K(Y) \\
        @V \Tr_{F^e, K_X} VV  @VV \Tr_{F^e, K_Y} . V\\
        K(X) @ >\subseteq>> K(Y) \\
        \end{CD}
	\]
\end{prop}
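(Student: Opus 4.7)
The plan is to apply the general functoriality of trace maps under composition of finite morphisms (\cite[Lemma 2.3]{BST}) to the identity $F^e_X \circ f = f \circ F^e_Y$ of morphisms $Y \to X$, and then to deduce the desired commutativity on function fields using the separability of $L/K$, where $K = K(X)$ and $L = K(Y)$.

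First, the two factorizations of the equal morphism $F^e_X \circ f = f \circ F^e_Y$ yield, via compatibility of trace with composition, the sheaf-level identity
\begin{equation*}
\Tr_{F^e_X} \circ F^e_{X,*}(\Tr_f) \;=\; \Tr_f \circ f_*(\Tr_{F^e_Y})
\end{equation*}
of maps $f_* F^e_{Y,*}\omega_Y \to \omega_X$. Restricting to the generic points and identifying $\omega_X|_\eta \cong K$ via $K_X$ and $\omega_Y|_\eta \cong L$ via $K_Y = f^* K_X + \Ram_f$, Example~\ref{tr}(3) identifies the generic-point restriction of $\Tr_f$ with the separable field trace $\Tr_{L/K}: L \to K$. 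We thus obtain the identity of $K$-linear maps $F^e_* L \to K$:
\begin{equation*}
\Tr_{F^e, K_X} \circ \Tr_{L/K} \;=\; \Tr_{L/K} \circ \Tr_{F^e, K_Y}. \tag{$\ast$}
\end{equation*}

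To deduce the commutativity of the diagram, fix a $K$-basis $e_0 = 1, e_1, \ldots, e_{n-1}$ of $L$ with trace-dual basis $\{e_i^*\}$ satisfying $\Tr_{L/K}(e_i e_j^*) = \delta_{ij}$; this exists by the non-degeneracy of the trace form under separability. For $\alpha \in F^e_* K \subseteq F^e_* L$, the reconstruction formula for the dual basis gives
\begin{equation*}
\Tr_{F^e, K_Y}(\alpha) = \sum_i e_i \, \Tr_{L/K}\bigl(e_i^* \, \Tr_{F^e, K_Y}(\alpha)\bigr).
\end{equation*}
Apply in sequence the $L$-linearity of $\Tr_{F^e, K_Y}$ (in the twisted form $e_i^* \Tr_{F^e, K_Y}(\alpha) = \Tr_{F^e, K_Y}((e_i^*)^{p^e} \alpha)$), the identity $(\ast)$, the $K$-linearity of $\Tr_{L/K}$ applied to $(e_i^*)^{p^e}\alpha$ (permissible since $\alpha \in K$), and the Frobenius-equivariance $\Tr_{L/K}(\gamma^{p^e}) = \Tr_{L/K}(\gamma)^{p^e}$ of the separable trace (immediate from expressing $\Tr_{L/K}$ as a sum over embeddings into a separable closure). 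Each summand then collapses to $e_i \, \Tr_{F^e, K_X}(\alpha) \cdot \delta_{i0}$: only the $i = 0$ term survives, giving $\Tr_{F^e, K_X}(\alpha) \in K$, as required.

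The principal technical subtlety is in the last step. Naively specializing $(\ast)$ at $\beta = \alpha \in F^e_* K$ yields only $\Tr_{L/K}(\Tr_{F^e, K_Y}(\alpha)) = [L:K] \cdot \Tr_{F^e, K_X}(\alpha)$, which is uninformative whenever $p \mid [L:K]$, so one cannot simply ``divide by $[L:K]$'' to invert the trace. The remedy is to use the $L$-linearity of $\Tr_{F^e, K_Y}$ to absorb each $e_i^*$ into the source as $(e_i^*)^{p^e}$ before invoking $(\ast)$; the Frobenius-equivariance of the separable trace then forces all the off-diagonal summands to vanish, independently of the characteristic.
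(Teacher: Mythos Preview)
Your argument is correct. The paper does not supply its own proof of this proposition---it simply cites \cite[Corollary 5.5]{ST1}---so there is no in-paper argument to compare against; you have furnished a complete self-contained proof where the paper defers to the literature.

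Your route is sound: the sheaf-level identity from functoriality of trace under $F^e_X\circ f=f\circ F^e_Y$ gives $(\ast)$, and the dual-basis reconstruction combined with the Frobenius-equivariance $\Tr_{L/K}(\gamma^{p^e})=\Tr_{L/K}(\gamma)^{p^e}$ (valid since $L/K$ is separable, so the trace is a sum over $K$-embeddings and the freshman's dream applies) neatly sidesteps the obstruction that $p$ may divide $[L:K]$. One small caveat worth making explicit: since $\Tr_{F^e,K_X}$ and $\Tr_{F^e,K_Y}$ are each only determined up to a global unit (as the paper itself notes in the remark preceding this proposition), your identity $(\ast)$ holds a priori only up to such a unit, and hence so does the final conclusion. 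This is harmless---the unit can be absorbed into the choice of $\Tr_{F^e,K_X}$---and is exactly the convention the paper adopts, but it would be cleaner to say so rather than to assert $(\ast)$ as a literal equality.
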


\subsection{Test ideals}\label{test ideal}
In this subsection, we assume all rings and schemes are characteristic $p>0$ and $F$-finite.
We define test ideals and recall some properties of test ideals.

\begin{defn}
	A \emph{pair} $(X, \D)$ consists of a Noetherian normal connected scheme $X$ and an $\R$-Weil divisor $\D$ on $X$.
        A \emph{triple} $(X, \D, \fa^t)$ consists of a  pair $(X, \D)$ and a symbol $\fa^t$, where $\fa$ is a coherent ideal sheaf and $t \in \R_{\ge 0}$.
        When additionally $K_X+\D$ is a $\Q$-Cartier $\Q$-divisor, $(X,\D)$ (resp. $(X, \D, \fa^t)$) is called a log $\Q$-Gorenstein pair (resp. triple).
\end{defn}

\begin{defn}
	Suppose that $(X=\Spec R, \D, \fa^t)$ is a triple with $\D$ effective.
        An ideal $J \subseteq R$ is \emph{uniformly $(\D, \fa^t, F)$-compatible} if 
        $\phi(F^e_*(\fa^{\lceil t(p^e-1) \rceil}J))\subseteq J$ for every $e \ge 1$ and every $\phi \in \Hom_R(F^e_*R(\lceil (p^e-1)\D \rceil ),R)$.
\end{defn}

\begin{defn}
	Let $(X= \Spec R, \D, \fa^t)$ be a triple.
        Assume that $\D$ is effective and $\fa \neq (0)$.
        We define the \emph{test ideal} $\tau(R, \D, \fa^t)=\tau(X, \D, \fa^t)$ to be the unique minimal non-zero uniformly $F$-compatible ideal.
        This ideal always exists (see \cite{Sch1}).
        
        When $\fa=R$, we denote $\tau(R, \D, \fa^t)$ by $\tau(X, \D)$.
        
        When $\fa=(0)$, we define $\tau(X, \D, \fa^t) \coloneqq (0)$.
\end{defn}

We recall some properties of test ideals.

\begin{prop}[\textup{basic properties}]\label{test basic}
	Let $(X, \D, \fa^t)$ be a triple.
        Assume that $X= \Spec R$ is affine and $\D$ is effective.
        Then the following properties hold.
	\begin{enumerate}
	\item \textup{\cite[Proposition 3.1]{HT}} 
        For every multiplicative closed subset $T \subseteq R$, we have 
$\tau(R, \D, \fa^t)T^{-1}R=\tau(T^{-1}R, \D|_{T^{-1}R}, T^{-1}\fa^t)$.
        \item \textup{\cite[Proposition 3.2]{HT}} 
        If $(R, \m)$ is local and $\widehat{R}$ is the $\m$-adic completion, then $\tau(R, \D, \fa^t) \cdot \widehat{R} = \tau(\widehat{R}, \widehat{\D}, (\fa \cdot \widehat{R})^t)$,
         where $\widehat{\D}$ is the flat pullback of $\D$ by $\Spec \widehat{R} \to X$.
	\item \textup{\cite[p.9 Basic property]{Tak}}
        For every Cartier divisor $D$ on $X$, $\tau(X, \D+D, \fa^t)=\tau(X, \D, \fa^t) \cdot \sO_X(-D) $.
        \item \textup{\cite[Lemma 3.1]{Sch3}}
Assume that $\fa= f \cdot R$  for some $f \neq 0$.
        Then we have $\tau(X, \D, \fa^t)= \tau( X, \D + t\Div(f))$.
        \end{enumerate}
\end{prop}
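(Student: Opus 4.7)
The plan is to derive all four properties directly from the defining characterization of $\tau(R, \D, \fa^t)$ as the unique minimal nonzero uniformly $(\D, \fa^t, F)$-compatible ideal, by tracking how the family of $\Hom$-modules $\Hom_R(F^e_*R(\lceil(p^e-1)\D\rceil), R)$ and the ideals $\fa^{\lceil t(p^e-1)\rceil}$ transform under each of the four operations.

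For (1) and (2) I would follow essentially the same strategy. Because $R$ is $F$-finite, $F^e_*R(\lceil(p^e-1)\D\rceil)$ is a finitely presented $R$-module, so $\Hom_R(F^e_*R(\lceil(p^e-1)\D\rceil), R)$ commutes with localization at any multiplicatively closed set $T$, and, via faithful flatness, with $\m$-adic completion. This shows that an ideal $J \subseteq R$ is uniformly $(\D, \fa^t, F)$-compatible if and only if its extension to $T^{-1}R$ (respectively $\widehat{R}$) is compatible with the induced data. The delicate point is to descend from the collection of compatible ideals to the minimal nonzero one, since a priori minimality may fail to be preserved under pullback. I would handle this by invoking a common test element $c \in R$: one shows that $\tau(R, \D, \fa^t)$ equals the sum $\sum_{e \geq 1,\, \phi}\phi(F^e_*(\fa^{\lceil t(p^e-1)\rceil}\, c R))$ over all $\phi \in \Hom_R(F^e_*R(\lceil(p^e-1)\D\rceil), R)$, a description which visibly commutes with both localization and completion.

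For (3), I would combine the identity $\lceil(p^e-1)(\D + D)\rceil = \lceil(p^e-1)\D\rceil + (p^e-1)D$ with the projection formula and the isomorphism $\sO_X(p^e D) \cong F^{e*}\sO_X(D)$ to get a natural bijection between the relevant $\Hom$-modules, under which the $(\D + D, \fa^t, F)$-compatibility of an ideal $J'$ translates into the $(\D, \fa^t, F)$-compatibility of $J' \cdot \sO_X(D)$; taking minima on both sides yields the formula. For (4), if $\fa = fR$ is principal, then $\fa^{\lceil t(p^e-1)\rceil} = f^{\lceil t(p^e-1)\rceil} R$, and multiplication by $f^{\lceil t(p^e-1)\rceil}$ gives a natural identification of divisorial sheaves under which the compatibility condition for $(\D, (fR)^t, F)$ translates exactly into the compatibility condition for $(\D + t\Div(f))$ with trivial auxiliary ideal. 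In all four cases the technical crux is the preservation-of-minimality issue flagged in (1) and (2); this is the main obstacle and is why the test-element description from \cite{HT, Sch3} is indispensable rather than a mere convenience.
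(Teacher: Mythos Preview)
The paper does not prove this proposition at all: each of the four parts is simply attributed to a reference (\cite{HT}, \cite{Tak}, \cite{Sch3}) and no argument is supplied. Your sketch is therefore not competing with any proof in the paper, but rather reconstructing the arguments from the cited sources.

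That said, your outline is essentially the one used in those references. The test-element description you invoke for (1) and (2) is precisely the mechanism in \cite{HT}, and your treatment of (3) and (4) via the natural identifications of the relevant $\Hom$-modules is the standard one (and is what underlies \cite[Lemma 3.1]{Sch3}). One small caveat on (3): as stated in the paper, $\D+D$ need not be effective even if $\D$ is, so strictly speaking the test ideal $\tau(X,\D+D,\fa^t)$ is being \emph{defined} by this formula (as a fractional ideal) rather than computed; your argument still makes sense, but you should be aware that the minimal-nonzero-compatible-ideal characterization applies verbatim only in the effective case. Apart from that, your plan is sound and matches the literature the paper is citing.
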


Let $(X, \D, \fa^t)$ be a triple.
By Proposition \ref{test basic} (1) and (3), we can define a test ideal $\tau(X, \D, \fa^t)$.
This is a coherent fractional ideal.

\begin{defn}\label{st. F-reg}
Let $(X, \D)$ be a pair with $\D$ effective and $P$ be a point of $X$.
$(X, \D)$ is \emph{strongly $F$-regular} at $P$ if $\tau(X, \D)_P=\sO_{X, P}$.
$(X, \D)$ is called strongly $F$-regular if $(X, \D)$ is strongly $F$-regular at every point $P$ in $X$.
When $(X, 0)$ is strongly $F$-regular, we say simply that $X$ is strongly $F$-regular.
\end{defn}

\begin{prop}[\textup{\cite[Theorem 5.1]{Sch2} and \cite[Lemma 2.5]{Sch3}}]\label{non gor test}
	Let $(X =\Spec R, \D, \fa^t)$ be a triple with $\D$ effective.
        Then we have
        \[
        \tau(X, \D, \fa^t)=\sum_{\D'} \tau(X, \D', \fa^t),
        \]
        where $\D'$ runs through all $\Q$-Weil divisors on $X$ such that $\D' \ge \D$ and $(p^e-1)(K_X+\D')$ is an integral Cartier divisor for some $e>0$.
\end{prop}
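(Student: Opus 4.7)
The plan is to prove both inclusions. The easier direction $\sum_{\D'}\tau(X,\D',\fa^t)\subseteq\tau(X,\D,\fa^t)$ will follow from monotonicity of the relevant Hom sets: for $\D'\geq\D$, the inclusion $R(\lceil(p^e-1)\D\rceil)\subseteq R(\lceil(p^e-1)\D'\rceil)$ in $K(X)$ identifies (by restriction) $\Hom_R(F^e_*R(\lceil(p^e-1)\D'\rceil),R)$ with a submodule of $\Hom_R(F^e_*R(\lceil(p^e-1)\D\rceil),R)$. Hence every uniformly $(\D,\fa^t,F)$-compatible ideal is uniformly $(\D',\fa^t,F)$-compatible; applying this to the non-zero ideal $\tau(X,\D,\fa^t)$ and using the minimality of $\tau(X,\D',\fa^t)$ gives the claimed inclusion, which then sums.

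For the reverse inclusion, the strategy is to invoke the sum-of-traces presentation
\[
\tau(X,\D,\fa^t)=\sum_{e\geq 1}\sum_{\phi}\phi\bigl(F^e_*\bigl(c\cdot\fa^{\lceil t(p^e-1)\rceil}\bigr)\bigr),
\]
where $c\in R$ is a fixed test element and $\phi$ ranges over $\Hom_R(F^e_*R(\lceil(p^e-1)\D\rceil),R)$; this is essentially the content of \cite[Theorem 5.1]{Sch2} together with \cite[Lemma 2.5]{Sch3}. Given a pair $(e,\phi)$, reflexive duality represents $\phi$ by a non-zero $f\in K(X)$ with $\Div(f)\geq (p^e-1)K_X+\lceil(p^e-1)\D\rceil$. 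Setting $\D_\phi:=\tfrac{1}{p^e-1}(\Div(f)-(p^e-1)K_X)$ produces an effective $\Q$-Weil divisor with $\D_\phi\geq\D$, $(p^e-1)\D_\phi$ integral, and $(p^e-1)(K_X+\D_\phi)=\Div(f)$ principal, hence Cartier; thus $\D_\phi$ is admissible in the index set on the right-hand side. The same $f$ realises $\phi$ as an element of $\Hom_R(F^e_*R(\lceil(p^e-1)\D_\phi\rceil),R)$, so applying the sum-of-traces formula to the triple $(X,\D_\phi,\fa^t)$ gives $\phi(F^e_*(c\cdot\fa^{\lceil t(p^e-1)\rceil}))\subseteq\tau(X,\D_\phi,\fa^t)$. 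Summing over all $(e,\phi)$ then yields $\tau(X,\D,\fa^t)\subseteq\sum_{\D'}\tau(X,\D',\fa^t)$, as required.

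The hard part will be justifying the sum-of-traces presentation itself: identifying the minimal non-zero uniformly $F$-compatible ideal with the stated explicit sum requires the existence of a uniform test element $c\in R$ and is the main substance behind the cited theorems of Schwede. Once that input is granted, the rest of the argument is the elementary divisorial bookkeeping described above, and no further subtlety about comparing test ideals for different $\D'$ is needed because each summand of $\tau(X,\D,\fa^t)$ is already absorbed into the single $\tau(X,\D_\phi,\fa^t)$ tailored to its defining $\phi$.
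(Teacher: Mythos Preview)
The paper does not supply a proof of this proposition; it is stated with attribution to \cite[Theorem 5.1]{Sch2} and \cite[Lemma 2.5]{Sch3} and used as a black box. Your outline is a faithful reconstruction of the argument behind those references: Schwede's proof proceeds exactly by the sum-of-traces description of $\tau(X,\D,\fa^t)$ together with the bijection between nonzero $\phi\in\Hom_R(F^e_*R(\lceil(p^e-1)\D\rceil),R)$ and effective $\Q$-divisors $\D_\phi\ge\D$ with $(p^e-1)(K_X+\D_\phi)$ Cartier.

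One point deserves to be made explicit. For the step ``applying the sum-of-traces formula to $(X,\D_\phi,\fa^t)$ gives $\phi(F^e_*(c\cdot\fa^{\lceil t(p^e-1)\rceil}))\subseteq\tau(X,\D_\phi,\fa^t)$'' you need more than that $c$ is a test element for the original triple $(X,\D,\fa^t)$: you need $c\in\tau(X,\D_\phi,\fa^t)$ for \emph{every} $\D_\phi$ that arises (so that the uniform $(\D_\phi,\fa^t,F)$-compatibility of $\tau(X,\D_\phi,\fa^t)$ absorbs the image). This is presumably what you mean by ``uniform test element'' in your final paragraph, and the existence of such a $c$---a single element lying in $\tau(X,\D',\fa^t)$ for all admissible $\D'\ge\D$---is indeed the substantive content of Schwede's argument (it is obtained, for instance, by choosing $c$ so that $R_c$ is regular and $\D|_{R_c}$, $\fa|_{R_c}$ are trivial). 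With that input granted, your divisorial bookkeeping is correct and complete.
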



\begin{defn}\label{phi}
Let $(X, \D)$ be a pair with $\D$ effective, $\omega_X=\sO_X(K_X)$ be a canonical module on $X$ such that $F^!\omega_X=\omega_X$, and $e$ be a positive integer.
\begin{enumerate}
\item If $(p^e-1)(K_X+\D)$ is an integral Cartier divisor, we denote the invertible sheaf $\sO_X((1-p^e)(K_X+\D))$ by $\sL_{e,\D}$ and we define
\[
\widetilde{\phi}_{e,\D} : F^e_*\sL_{e, \D} \stackrel{\subseteq}{\longrightarrow} F^e_*\sO_X((1-p^e)K_X) \stackrel{\Tr_{F^e_X, K_X}}{\longrightarrow} \sO_X.
\]
\item If $(p^e-1)(K_X+\D)$ is a principal divisor, we define
\[
\phi_{e, \D}: F^e_*\sO_X \stackrel{\simeq}{\longrightarrow} F^e_*\sL_{e, \D} \stackrel{\widetilde{\phi}_{e,\D}}{\longrightarrow} \sO_X.
\]
\end{enumerate}
\end{defn}

\begin{prop}[\textup{\cite[Lemma 3.21]{BSTZ}}]\label{phi acc}
	Let $(X, \D, \fa^t)$ be a triple and $q=p^e$ for some positive integer $e$.
        Assume that $X$, $\D$ and $e$ are as in \textup{Definition \ref{phi} (1)}.
        Then the sequence $\{ \widetilde{\phi}_{en, \D} (F^{en}_*(\fa ^{\lceil q^nt \rceil} \tau(X, \D)\sL_{en, \D}) )\}_n$ 
        is an ascending chain of coherent ideals which converges to $\tau(X, \D, \fa^t)$.
\end{prop}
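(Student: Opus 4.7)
The plan is to establish three properties of the sequence
\[
I_n \coloneqq \widetilde{\phi}_{en, \Delta}\bigl(F^{en}_*(\fa^{\lceil q^n t\rceil}\tau(X, \Delta)\sL_{en, \Delta})\bigr):
\]
that it is ascending, that each term lies in $\tau(X, \Delta, \fa^t)$, and that $\tau(X, \Delta, \fa^t)$ lies in $\bigcup_n I_n$. Since these are coherent ideals in a Noetherian ring, the chain then stabilizes at $\tau(X, \Delta, \fa^t)$.

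The ascending property rests on the composition law for Frobenius trace maps. Using $1 - p^{e(n+1)} = (1 - p^e) + p^e(1 - p^{en})$, I get a canonical isomorphism $\sL_{e(n+1), \Delta} \cong \sL_{e, \Delta} \otimes F^{e*}\sL_{en, \Delta}$, and combining with the projection formula $F^e_*(M \otimes F^{e*}N) \cong F^e_* M \otimes N$ and the transitivity of trace maps from Example \ref{tr} yields the factorization
\[
\widetilde{\phi}_{e(n+1), \Delta} = \widetilde{\phi}_{en, \Delta} \circ F^{en}_*\bigl(\widetilde{\phi}_{e, \Delta} \otimes \mathrm{id}_{\sL_{en, \Delta}}\bigr).
\]
Combined with the elementary estimate $q\lceil q^n t\rceil \geq \lceil q^{n+1} t\rceil$ and the identity $\widetilde{\phi}_{e, \Delta}(F^e_*(\tau(X, \Delta)\sL_{e, \Delta})) = \tau(X, \Delta)$ (the left side is a non-zero uniformly $(\Delta, F)$-compatible subideal of $\tau(X, \Delta)$ and so equals it by minimality), this lets me rewrite every local generator of $I_n$ as $\widetilde{\phi}_{e(n+1), \Delta}$ applied to a section of $\fa^{\lceil q^{n+1} t\rceil}\tau(X, \Delta)\sL_{e(n+1), \Delta}$, proving $I_n \subseteq I_{n+1}$.

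For the upper bound $I_n \subseteq \tau(X, \Delta, \fa^t)$ I work locally on $X = \Spec R$. Given a local trivialization $s$ of $\sL_{en, \Delta}$, the map $r \mapsto \widetilde{\phi}_{en, \Delta}(F^{en}_*(rs))$ lies in $\Hom_R(F^{en}_* R(\lceil (p^{en}-1)\Delta\rceil), R)$. The inequality $\lceil q^n t\rceil \geq \lceil (p^{en}-1)t\rceil$ together with the bracket containment $\fa^{\lceil q^n t\rceil} \supseteq (\fa^{\lceil t\rceil})^{[q^n]}\cdot\fa^{\lceil q^n\{t\}\rceil}$ allows me to pull $\fa^{\lceil t\rceil}$ out of $F^{en}_*$ via the projection formula; a Skoda-type inclusion $\fa^{\lceil t\rceil}\tau(X, \Delta) \subseteq \tau(X, \Delta, \fa^t)$ combined with uniform $(\Delta, \fa^t, F)$-compatibility of $\tau(X, \Delta, \fa^t)$ then places the image inside $\tau(X, \Delta, \fa^t)$.

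For the lower bound, I show that $I_\infty \coloneqq \bigcup_n I_n$ is a non-zero uniformly $(\Delta, \fa^t, F)$-compatible ideal and invoke the minimality of $\tau(X, \Delta, \fa^t)$. Compatibility is verified by composing any $\phi \in \Hom_R(F^{e'}_* R(\lceil (p^{e'}-1)\Delta\rceil), R)$ with $\widetilde{\phi}_{eN, \Delta}$ for an $N$ with $I_N = I_\infty$: the composition is a map of the same type at index $eN + e'$, so $\phi(F^{e'}_*(\fa^{\lceil(p^{e'}-1)t\rceil} I_N))$ lands in a later $I_n \subseteq I_\infty$. Non-vanishing of $I_\infty$ follows from that of $\tau(X, \Delta)$ at the generic points. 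The hardest step is the upper bound: since $\tau(X, \Delta) \supsetneq \tau(X, \Delta, \fa^t)$ in general, the argument relies crucially on the bracket-power structure of $\fa^{\lceil q^n t\rceil}$ to transfer the surplus $\fa$-factors past the trace map and land in the smaller test ideal.
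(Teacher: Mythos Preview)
The paper gives no proof of this proposition, citing \cite[Lemma 3.21]{BSTZ} instead, so there is no in-paper argument to compare against. Your Parts~1 and~3 are essentially the standard argument and are fine (for Part~3, once $(p^e-1)(K_X+\Delta)$ is Cartier the Cartier algebra is generated in degree~$e$, so checking compatibility for $e'$ a multiple of $e$ suffices).

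The gap is in Part~2, the upper bound $I_n\subseteq\tau(X,\Delta,\fa^t)$. Your plan is to pull a factor $\fa^{\lceil t\rceil}$ through $F^{en}_*$ and then invoke $\fa^{\lceil t\rceil}\tau(X,\Delta)\subseteq\tau(X,\Delta,\fa^t)$ together with $(\Delta,\fa^t,F)$-compatibility. But the projection formula gives $F^{en}_*\bigl(J^{[q^n]}\cdot M\bigr)=J\cdot F^{en}_*M$, so pulling out $\fa^{\lceil t\rceil}$ requires $\fa^{\lceil q^nt\rceil}\subseteq(\fa^{\lceil t\rceil})^{[q^n]}\cdot\fa^{\,\cdots}$, the \emph{opposite} of the containment you wrote; and that reverse inclusion is false in general (for $\fa=(x,y)$ and small $t$, elements of $(\fa^{\lceil t\rceil})^{[q^n]}$ already have order $\ge q^n$, far exceeding $\lceil q^nt\rceil$). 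Nor can one simply replace $\tau(X,\Delta)$ by $\tau(X,\Delta,\fa^t)$ via the inequality $\lceil q^nt\rceil\ge\lceil(q^n-1)t\rceil+\lceil t\rceil$, because that inequality goes the wrong way. The fix used in \cite{BSTZ} is to expand $\tau(X,\Delta)=\sum_m\widetilde{\phi}_{em,\Delta}\bigl(F^{em}_*(c\,\sL_{em,\Delta})\bigr)$ for a test element $c$ that is simultaneously a test element for the triple; substituting and composing traces gives
\[
I_n\subseteq\sum_m\widetilde{\phi}_{e(n+m),\Delta}\Bigl(F^{e(n+m)}_*\bigl(\fa^{\,q^m\lceil q^nt\rceil}\,c\,\sL_{e(n+m),\Delta}\bigr)\Bigr)
\subseteq\sum_{N}\widetilde{\phi}_{eN,\Delta}\Bigl(F^{eN}_*\bigl(\fa^{\lceil(q^{N}-1)t\rceil}\,c\,\sL_{eN,\Delta}\bigr)\Bigr)
=\tau(X,\Delta,\fa^t),
\]
using $q^m\lceil q^nt\rceil\ge\lceil q^{n+m}t\rceil\ge\lceil(q^{n+m}-1)t\rceil$. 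This is the missing idea.
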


\begin{prop}[\textup{\cite[Lemma 5.4]{BSTZ}}]\label{phi principal}
	Let $(X=\Spec R, \D)$ be a pair and $q=p^e$ for some positive integer $e>0$.
        Assume that $X$, $\D$ and $e$ are as in \textup{Definition \ref{phi} (2)}.
        Then, for every non-zero element $f \in R$, we have
        \[
        \phi_{e, \D}(f \cdot \tau(X, \D))= \tau(X, \D+ \frac{\Div(f)}{q}).
        \]
\end{prop}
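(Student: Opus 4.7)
The plan is to combine Proposition \ref{test basic} (4) with the ascending chain description of Proposition \ref{phi acc}. Applying Proposition \ref{test basic} (4) with $t=1/q$ gives $\tau(X, \D + \Div(f)/q) = \tau(X, \D, (f)^{1/q})$. Since $\lceil p^{en}/q\rceil = p^{e(n-1)}$ for $n\ge 1$, Proposition \ref{phi acc} applied to the triple $(X, \D, (f)^{1/q})$ presents this ideal as the stable value of the ascending chain
\[
I_n := \widetilde{\phi}_{en,\D}\bigl(F^{en}_*\bigl(f^{p^{e(n-1)}}\, \tau(X,\D)\, \sL_{en,\D}\bigr)\bigr), \qquad n \ge 1.
\]
Thus it suffices to show $I_n = \phi_{e,\D}(f\cdot\tau(X,\D))$ for every $n$.

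For $n = 1$, I unpack Definition \ref{phi} (2): if $g \in K(X)$ is a local generator of $(p^e-1)(K_X+\D)$, then $\sL_{e,\D} = g^{-1}\sO_X$ as a fractional ideal, and the isomorphism $F^e_*\sO_X \simeq F^e_*\sL_{e,\D}$ defining $\phi_{e,\D}$ is multiplication by $g^{-1}$. Hence it sends $f\cdot\tau(X,\D)$ to $fg^{-1}\tau(X,\D) = f\cdot\tau(X,\D)\cdot\sL_{e,\D}$ inside $F^e_*K(X)$, and composing with $\widetilde{\phi}_{e,\D}$ (i.e.\ $\Tr_{F^e_X,K_X}$) yields $I_1 = \phi_{e,\D}(f\cdot\tau(X,\D))$. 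The identical identification applied with $en$ in place of $e$ gives $I_n = \phi_{en,\D}(f^{p^{e(n-1)}}\,\tau(X,\D))$.

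To show $I_n = I_1$, the containment $I_1 \subseteq I_n$ is the ascending property from Proposition \ref{phi acc}. For the reverse direction I will use three ingredients: (i) the iteration identity $\phi_{en,\D} = \phi_{e,\D}^n$ (where $\phi_{e,\D}^n$ denotes $\phi_{e,\D}\circ F^e_*\phi_{e,\D}\circ\cdots\circ F^{(n-1)e}_*\phi_{e,\D}$), which follows from the composition rule for Frobenius traces combined with $g\cdot g^{p^e}\cdots g^{p^{(n-1)e}} = g^{(p^{en}-1)/(p^e-1)}$; (ii) the $F^e_*$-linearity relation $\phi_{e,\D}(a^{p^e}b) = a\,\phi_{e,\D}(b)$, applied inductively at each level of the $n$-fold iteration to extract the $p^e$-th power factors from $f^{p^{e(n-1)}} = (f^{p^{e(n-2)}})^{p^e}$, yielding
\[
\phi_{e,\D}^n\bigl(f^{p^{e(n-1)}}\, m\bigr) = \phi_{e,\D}\bigl(f \cdot \phi_{e,\D}^{n-1}(m)\bigr);
\]
and (iii) the uniform $(\D,F)$-compatibility of $\tau(X,\D)$, which gives $\phi_{e,\D}^{n-1}(\tau(X,\D)) \subseteq \tau(X,\D)$. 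Combining these,
\[
I_n = \phi_{en,\D}\bigl(f^{p^{e(n-1)}}\,\tau(X,\D)\bigr) \subseteq \phi_{e,\D}\bigl(f\cdot\tau(X,\D)\bigr) = I_1,
\]
as desired. The main technical obstacle is the careful bookkeeping in steps (i) and (ii): one must keep track of which copy of Frobenius is being pushed forward at each stage of the iteration so that exactly one factor of $f$ remains outside at the end, with all deeper factors absorbed one Frobenius level at a time into the inner $\phi_{e,\D}$'s.
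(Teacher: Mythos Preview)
The paper does not supply its own proof of this proposition; it simply cites \cite[Lemma 5.4]{BSTZ}. So there is nothing in the paper to compare your argument against directly. That said, your argument is correct and has the virtue of being self-contained within results already stated in the paper (Propositions \ref{test basic}(4) and \ref{phi acc}), rather than appealing to the external reference.

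A couple of small points worth tightening. First, the identity $\phi_{en,\D}=\phi_{e,\D}^n$ in step (i) holds only up to a unit in $F^{en}_*\sO_X^\times$, because Definition \ref{phi}(2) involves an unspecified choice of isomorphism $\sO_X\simeq\sL_{e,\D}$; this is harmless since you only use images of ideals, but it is worth saying explicitly (cf.\ the remark following Definition \ref{phi}). Second, for step (iii) you invoke ``uniform $(\D,F)$-compatibility of $\tau(X,\D)$'' to get $\phi_{e,\D}^{n-1}(\tau(X,\D))\subseteq\tau(X,\D)$; strictly speaking the compatibility condition is phrased for maps in $\Hom_R(F^e_*R(\lceil(p^e-1)\D\rceil),R)$, so you should observe that under the hypothesis that $(p^e-1)(K_X+\D)$ is principal, $\phi_{e,\D}$ lies in (indeed generates) this Hom module, whence the desired containment. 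With these clarifications your proof goes through.
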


\begin{prop}[\textup{\cite[Lemma 3.23]{BSTZ}}]\label{fpt}
	Suppose that $(X, \D, \fa^t)$ is a triple.
        Then the following properties hold.
        \begin{enumerate}
        \item \textup{\cite[Lemma 6.1]{ST2}}
        We have $\tau (X, \D, \fa^{t+\epsilon}) \subseteq \tau(X, \D, \fa^t)$ for all $0 \le \epsilon$,
        and \\
$\tau (X, \D, \fa^{t+\epsilon}) = \tau(X, \D, \fa^t)$ for all $0 \le \epsilon \ll 1$.
        \item Let $D$ be an effective Cartier divisor on $X$.
        Then $\tau(X, \D+\epsilon D, \fa^t) \subseteq \tau(X, \D, \fa^t)$ for all $0 \le \epsilon$,
        and $\tau(X, \D+\epsilon D, \fa^t)=\tau(X, \D , \fa^t)$ for all $0 \le \forall \epsilon \ll 1$.
        \end{enumerate}
\end{prop}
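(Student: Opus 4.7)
The plan is to split each part into (a) the monotone containment for all $\epsilon\ge 0$ and (b) the equality for small $\epsilon$.

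The containments (a) follow from the minimality in the definition of the test ideal. For (1), $\fa^{\lceil(t+\epsilon)(p^e-1)\rceil}\subseteq\fa^{\lceil t(p^e-1)\rceil}$ shows that any uniformly $(\D,\fa^t,F)$-compatible ideal is uniformly $(\D,\fa^{t+\epsilon},F)$-compatible; since $\tau(X,\D,\fa^t)$ is such, minimality gives $\tau(X,\D,\fa^{t+\epsilon})\subseteq\tau(X,\D,\fa^t)$. For (2), any $\phi\in\Hom_R(F^e_*R(\lceil(p^e-1)(\D+\epsilon D)\rceil),R)$ restricts along the inclusion $R(\lceil(p^e-1)\D\rceil)\subseteq R(\lceil(p^e-1)(\D+\epsilon D)\rceil)$ to some $\phi'\in\Hom_R(F^e_*R(\lceil(p^e-1)\D\rceil),R)$ that agrees with $\phi$ on the smaller module; hence uniform $(\D,\fa^t,F)$-compatibility implies uniform $(\D+\epsilon D,\fa^t,F)$-compatibility, and again $\tau(X,\D+\epsilon D,\fa^t)\subseteq\tau(X,\D,\fa^t)$.

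For (b), I would reduce to the $\Q$-Gorenstein setting via Proposition \ref{non gor test}. Since $\tau(X,\D,\fa^t)$ is finitely generated on an affine Noetherian scheme, finitely many $\D_1,\dots,\D_k$ with $\D_i\ge\D$ and $(p^{e_i}-1)(K_X+\D_i)$ Cartier satisfy $\tau(X,\D,\fa^t)=\sum_i\tau(X,\D_i,\fa^t)$. It then suffices to prove the equality on each $(X,\D_i,\fa^t)$ and take the minimum of the resulting thresholds: in (1), summing and reapplying Proposition \ref{non gor test} to $\tau(X,\D,\fa^{t+\epsilon})$ yields $\tau(X,\D,\fa^t)=\sum_i\tau(X,\D_i,\fa^{t+\epsilon})\subseteq\tau(X,\D,\fa^{t+\epsilon})$; in (2), a further application of Proposition \ref{non gor test} to each $\tau(X,\D_i+\epsilon D,\fa^t)$ puts its $\Q$-Gorenstein components $\D''\ge\D_i+\epsilon D\ge\D+\epsilon D$ into the sum for $\tau(X,\D+\epsilon D,\fa^t)$, giving the analogous chain of inclusions.

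In the $\Q$-Gorenstein case, Proposition \ref{phi acc} exhibits $\tau(X,\D,\fa^t)$ as the union of the ascending chain of coherent ideals $J_n:=\widetilde{\phi}_{en,\D}(F^{en}_*(\fa^{\lceil p^{en}t\rceil}\tau(X,\D)\sL_{en,\D}))$, which by Noetherian-ness stabilizes at some $n_0$ with $J_{n_0}=\tau(X,\D,\fa^t)$. For (1), I would choose $\epsilon>0$ small enough that $\lceil p^{en_0}(t+\epsilon)\rceil=\lceil p^{en_0}t\rceil$; then the $n_0$-th term of the chain for $\fa^{t+\epsilon}$ also equals $J_{n_0}$ and lies inside $\tau(X,\D,\fa^{t+\epsilon})$, giving the reverse inclusion. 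For (2), working locally where $D=\Div(f)$, Proposition \ref{phi principal} rewrites the divisor perturbation as $\tau(X,\D+mD/p^{en})=\phi_{en,\D}(f^m\cdot\tau(X,\D))$ for integer $m$, reducing the problem for rational perturbations $\epsilon=m/p^{en}$ to an analogue of (1) for the principal ideal $(f)$; monotonicity in $\epsilon$ then yields equality for arbitrary small real $\epsilon$ by squeezing. The main obstacle I anticipate is the bookkeeping in the non-$\Q$-Gorenstein reduction for (2), where $(p^{e_i}-1)(K_X+\D_i+\epsilon D)$ need not be Cartier for real $\epsilon$, forcing a careful rational approximation compatible with each Cartier index and a uniform stabilization argument over the finite set $\{\D_i\}$.
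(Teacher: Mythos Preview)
Your overall strategy---monotonicity from the definition, reduction to the $\Q$-Gorenstein case via Proposition~\ref{non gor test}, then the trace-map description---matches the paper's in spirit, but the paper's route for part~(2) is shorter and sidesteps the obstacle you flag at the end. The paper works locally with $D=\Div(f)$ and invokes \cite[Lemma~3.1]{Sch3} (the multi-ideal version of Proposition~\ref{test basic}(4)) to rewrite
\[
\tau(X,\D+\epsilon D,\fa^t)=\tau(X,\D,\fa^t\fb^\epsilon),\qquad \fb=(f),
\]
so that $\D$ never changes. The non-$\Q$-Gorenstein reduction is then exactly Proposition~\ref{non gor test} applied to the fixed divisor $\D$, with no Cartier-index interaction with $\epsilon$, and the remaining statement $\tau(X,\D,\fa^t)=\tau(X,\D,\fa^t\fb^\epsilon)$ for $0\le\epsilon\ll1$ is the two-ideal analogue of (1), handled as in \cite[Lemma~3.23]{BSTZ}.

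There is a genuine gap in your handling of (2b): Proposition~\ref{phi principal} is stated only for pairs, so the identity $\tau(X,\D+mD/p^{en})=\phi_{en,\D}(f^m\cdot\tau(X,\D))$ has lost the factor $\fa^t$ you need to carry. To repair this you would have to prove a triple version, which is essentially the content of the conversion $\tau(X,\D+\epsilon D,\fa^t)=\tau(X,\D,\fa^t\fb^\epsilon)$ the paper uses directly. A smaller issue in (1b): your choice of $\epsilon$ with $\lceil p^{en_0}(t+\epsilon)\rceil=\lceil p^{en_0}t\rceil$ is impossible whenever $p^{en_0}t\in\Z$, and for $t\in\Z[1/p]$ this persists for all $n\ge n_0$; the arguments in \cite{BSTZ,ST2} handle this with slightly different bookkeeping (e.g.\ working with exponents $\lceil t(p^{en}-1)\rceil$ or allowing one extra power of $\fa$), but as written your step does not go through in that case. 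Since the paper cites (1) rather than proving it, the substantive point for this proposition is the clean reduction of (2) via \cite[Lemma~3.1]{Sch3}.
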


\begin{proof}
        For (2), we may assume that $X=\Spec R$ and $D = \Div(f)$ for some $f \in R$.
        By \cite[Lemma 3.1]{Sch3}, we have $\tau(X, \D+ \epsilon D, \fa^t)=\tau(X, \D, \fa^t \fb^\epsilon)$, where $\fb \coloneqq f \cdot R \subseteq R$.
        We need to show $\tau(X, \D, \fa^t)= \tau(X, \D, \fa^t \fb^\epsilon)$ for all $0 < \epsilon \ll 1$.
        By Proposition \ref{non gor test}, we may assume that $(p^e-1)(K_X+ \D)$ is an integral divisor for some $e>0$.
        In this case, the proof is similar to the proof of \cite[Lemma 3.23]{BSTZ}.
\end{proof}

Next, we introduce the asymptotic version of test ideals.

\begin{defn}
Let $X$ be a scheme. A sequence of coherent ideals $\fa_\bullet = \{ \fa_n \}_{n \ge 1}$ is called a \emph{graded sequence of ideals} if $ \fa_n \cdot \fa_m \subseteq \fa_{m+n}$ for every $n,m \ge 1$.
A graded sequence of ideals $\fa_\bullet$ is called \emph{non-zero} if $\fa_n \neq0$ for some $n \ge 1$.
\end{defn}

\begin{propdef}[\textup{\cite[p.8]{Mus}, cf. \cite[Definition 11.1.15]{Laz}}]\label{asympt}
	Let $(X,\D)$ be a pair, $t$ be an element in $\R_{ \ge0}$, and $\fa_\bullet$ be a graded sequence of ideals.
        Then the set of coherent fractional ideals $\{\tau(X, \D, \fa_n^{t/n}) \}_{n\ge 1}$ has a unique maximal element.
        We denote this coherent fractional ideal by $\tau(X, \D, \fa_\bullet^t)$ and call it the \emph{asymptotic test ideal}.
        Furthermore, $\tau(X, \D, \fa_\bullet^t)=\tau(X, \D, \fa_n^{t/n})$ for all divisible $n$.
\end{propdef}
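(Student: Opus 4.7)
The plan is to deduce the existence of a unique maximal element from two ingredients: a monotonicity relation that makes the family $\{\tau(X, \D, \fa_n^{t/n})\}_{n \geq 1}$ directed under divisibility, and the ascending chain condition on coherent fractional ideals coming from $X$ being Noetherian.

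The heart of the argument is the monotonicity
\[
\tau(X, \D, \fa_n^{t/n}) \subseteq \tau(X, \D, \fa_N^{t/N}) \qquad \text{whenever } n \mid N.
\]
Writing $N = mn$, the graded sequence condition $\fa_n^m \subseteq \fa_{mn} = \fa_N$ combined with monotonicity of test ideals in the ideal argument (which is immediate from the definition of uniform compatibility) yields $\tau(X, \D, (\fa_n^m)^{t/N}) \subseteq \tau(X, \D, \fa_N^{t/N})$. The next step is to rewrite the left-hand side using the ``integer-power absorption'' identity $\tau(X, \D, (\fa^r)^s) = \tau(X, \D, \fa^{rs})$, applied with $r = m$ and $s = t/(mn)$, so that it equals $\tau(X, \D, \fa_n^{t/n})$. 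In the $\Q$-Gorenstein case, this absorption is verified via Proposition \ref{phi acc}: the exponents $r\lceil q^n s \rceil$ and $\lceil q^n rs \rceil$ differ by at most $r-1$ independently of $n$, so the ideals $\fa^{r\lceil q^n s\rceil}$ and $\fa^{\lceil q^n rs\rceil}$ differ by only a bounded power of $\fa$, and one exploits the Frobenius-trace compatibility $\Tr_{F^{e(n+c)}} = \Tr_{F^{en}} \circ F^{en}_*(\Tr_{F^{ec}})$ to absorb this bounded discrepancy in the limit. The general case then reduces to the $\Q$-Gorenstein case via Proposition \ref{non gor test}.

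With monotonicity in hand, directedness is immediate: for any two indices $n_1, n_2$, any common multiple $N$ (for example $N = n_1 n_2$) satisfies $\tau(X, \D, \fa_{n_i}^{t/n_i}) \subseteq \tau(X, \D, \fa_N^{t/N})$ for both $i$. Since each member of the family is a coherent fractional ideal on a Noetherian scheme, any ascending chain stabilizes; this together with directedness forces a unique maximum, for any two maximal elements must be contained in a common member and hence coincide by maximality. Denote this common maximum by $\tau(X, \D, \fa_\bullet^t)$. Finally, letting $n_0$ be an index realizing the maximum, monotonicity gives $\tau(X, \D, \fa_\bullet^t) \subseteq \tau(X, \D, \fa_N^{t/N})$ for every $N$ with $n_0 \mid N$, and the reverse inclusion follows from maximality, establishing the divisibility statement.

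The main obstacle in this plan is the integer-power absorption identity: the ceiling function $\lceil q^n s \rceil$ does not commute with multiplication by $r$, so the two test ideals being compared are defined via slightly different ideals, and reconciling them requires passing up along the ascending chain of Proposition \ref{phi acc} and using the factorization of higher Frobenius traces, rather than comparing at a single fixed level.
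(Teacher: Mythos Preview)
Your proposal is correct and follows essentially the same approach the paper points to: the paper's proof consists entirely of the sentence ``The proof is similar to the proof of \cite[Definition 11.1.15]{Laz},'' and your argument is precisely that Lazarsfeld-style directedness-plus-Noetherian argument, with the one test-ideal-specific wrinkle (the absorption identity $\tau(X,\Delta,(\fa^r)^s)=\tau(X,\Delta,\fa^{rs})$, which for multiplier ideals is immediate from the log-resolution definition but for test ideals requires the bounded-exponent-discrepancy argument you sketch) correctly isolated and handled.
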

\begin{proof}
	The proof is similar to the proof of \cite[Definition 11.1.15]{Laz}.
\end{proof}

\begin{rem}\hfill
	\begin{enumerate}
        \item $\tau(X, \D, \fa_\bullet^t) \neq 0 \Leftrightarrow \fa_\bullet$ is non-zero.
        \item In general, the above definition differs from the one given in \cite{Hara}. See \cite[Remark 1.4]{TY}.
        \item When $X$ is regular and $\D=0$, this definition coincides with the one given in \cite{Mus}.
        \end{enumerate}
\end{rem}

\subsection{Multiplier ideals}\label{multiplier ideal}
Assume that $f \colon Y \to X$ is a birational proper morphism of finite dimensional Noetherian normal connected schemes and $K_X, K_Y$ be as in Example \ref{tr} (1).
If $\D$ is a $\Q$-Weil divisor on $X$ such that $K_X+\D$ is a $\Q$-Cartier $\Q$-divisor, we can define $\D_Y=-K_Y+f^*(K_X+\D)$.

Let $E \subseteq Y$ be a prime divisor on $Y$. A discrete valuation $v=\ord_E: K(X) \to \Z \cup \{ \infty \}$ is called the \emph{divisorial valuation} over $X$, and $\cen_X(E)=\cen_X(v) \coloneqq f(E)\subseteq X$ is called the \emph{center} of $E$ or $v$.

\begin{defn}[\textup{\cite[Definition 2.18]{BST}}]\label{multiplier}
Let $(X, \D, \fa^t)$ be a log $\Q$-Gorenstein triple.
We define the \emph{multiplier ideal} by
\[
\J(X, \D, \fa^t) \coloneqq \bigcap_{f:Y \to X} f_*\sO_Y(- \lfloor \D_Y+tF \rfloor),
\]
where $f:Y\to X$ runs through all proper birational morphisms such that $\fa \cdot \sO_Y=\sO_Y(-F)$ for some effective divisor $F$ on $Y$.

When $\fa=\sO_X$, we denote $\J(X, \D, \fa^t)$ by $\J(X, \D)$.
When $\fa =0$, we define $\J(X, \D, \fa^t)=(0)$.
\end{defn}

\begin{defn}\label{asy mult}
	Let $X$, $\D$ and $t$ be as in Definition \ref{multiplier} and $\fa_\bullet$ be a graded sequence of ideals.
Then we define the asymptotic multiplier ideal $\J(X ,\D ,\fa^t_\bullet)$ by
\[
\J(X, \D, \fa^t_\bullet) : = \sum_{n \ge 1} \J(X, \D, \fa^{t/n}_n)
\] 
\end{defn}

\begin{rem}
If $X$ is the spectrum of a local ring or $X$ is a variety over a field of characteristic $0$, then $\J(X, \D, \fa_n^{t/n})$ is coherent for every $n$.
In this case, the set of coherent fractional ideal sheaves $\{ \J(X, \D, \fa_n^{t/n}) \}_{n\ge 1}$ has a unique maximal element, which corresponds to $\J(X, \D, \fa^t_\bullet)$.
\end{rem}

\subsection{Divisorial valuations and orders}\label{divisorial valuation}

In this subsection, we recall the definitions and some properties of divisorial valuations and orders.
\begin{defn}
Let $(R, \m)$ be a Noetherian local ring, $f$ be an element in $R$, $\fa$ be an ideal, and $\fa_\bullet$ be a graded sequence of ideals.
\begin{enumerate}
\item We define $\ord_R(f) \coloneqq \sup\{ r\ge 0 \mid f \in \m^r\} \in \N \cup \{ \infty \}$.
\item We define $\ord_R(\fa) \coloneqq \sup\{ r\ge 0 \mid \fa \subseteq \m^r\} =\inf \left\{ \ord_R(f) \mid f \in \fa \right\} \in \N \cup \{ \infty \}$.
\item We define $\ord_R(\fa_\bullet) \coloneqq \inf_n \ord_R( \fa _n)/n \in \R_{\ge 0} \cup \{ \infty \}$.
\end{enumerate}
\end{defn}

\begin{defn}
Let $(R, \m)$ be a Noetherian local domain with $K=\Frac(R)$ and $k=R/\m$.
A discrete valuation $v: K \to \Z \cup \{ \infty \}$ is called an $\m$-valuation 
if $v$ is non-negative on $R$, positive on $\m$, and $\mathrm{tr.deg}_k k(v)=\dim R-1$ where $k(v)$ is the residue field of the discrete valuation ring associated to $v$.
\end{defn}

\begin{rem}
Let $X$ be as in Definition \ref{multiplier}, $v$ be a discrete valuation of $K(X)$, and $P$ be a point of $X$.
Then the following are equivalent:
\begin{enumerate}
\item $v$ is a divisorial valuation such that $\cen_X(v)=\overline{ \{ P \}}$.
\item $v$ is an $\m_P$-valuation of $(\sO_{X,P}, \m_P)$.
\end{enumerate}
(See \cite[Lemma 2.45]{KM}).
\end{rem}

\begin{defn}
Let $(R, \m)$ be a Noetherian local ring, $v$ be an $\m$-valuation, $\fa$ be an ideal, and $\fa_\bullet$ be a graded sequence of ideals.
\begin{enumerate}
\item We define $v(\fa) \coloneqq \inf \left\{ v(f) \mid f \in \fa \right\} \in \N \cup \{ \infty \}$.
\item We define $v(\fa_\bullet) \coloneqq \inf_n v( \fa _n)/n \in \R_{\ge 0} \cup \{ \infty \}$.
\end{enumerate}
\end{defn}

Let $X$ be a normal Noetherian scheme, $P$ be a point of $X$ and $v$ be a divisorial valuation such that $\cen_X(v)=\overline{\{ P \} }$.
For every effective Cartier divisor $D$ on $X$, we define 
$\ord_P(D) \coloneqq \ord_R(f_D)$ and $v(D) \coloneqq v(f_D)$, where $R=\sO_{X,P}$ and $f_D \in R$ is the equation of $D$ at $P$.
For a coherent ideal $\fa$, we define $\ord_P(\fa) \coloneqq \ord_R(\fa_P)$ and $v(\fa)=v(\fa_P)$.
In the same way, we can define $\ord_P(\fa_\bullet)$ and $v(\fa_\bullet)$.

\begin{prop}[\textup{Izumi's theorem cf.\cite{Ree}}]\label{Izumi}
	Let $(R, \m)$ be a Noetherian local ring such that $\widehat{R}$ is an integral domain.
        (This condition holds when $R$ is normal and excellent.)
        Then the following properties hold.
	\begin{enumerate}
	\item For every $\m$-valuations $v$ and $w$, there exists $C>0$ such that $v(f) \le C \cdot w(f)$ for all $f \in R$.
        \item For every $\m$-valuation $v$, there exists $C>0$ such that $\ord_R(f) \le v(f) \le C \cdot \ord_R(f)$ for all $f \in R$.
        \end{enumerate}
\end{prop}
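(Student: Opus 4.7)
The plan is to first reduce (1) to (2) using $\ord_R$ as an intermediary---applying the upper inequality of (2) to $v$ and the lower inequality of (2) to $w$ gives $v(f) \le C_v \cdot \ord_R(f) \le C_v \cdot w(f)$---so the real task is (2). The lower inequality $\ord_R(f) \le v(f)$ of (2) is itself immediate: since $v$ is $\Z$-valued and positive on $\m$, we have $v(g) \ge 1$ for every $g \in \m$; if $f \in \m^r$, writing $f$ as a sum of $r$-fold products of elements of $\m$ yields $v(f) \ge r = \ord_R(f)$.

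For the upper inequality $v(f) \le C \cdot \ord_R(f)$, I would reduce to the case in which $R$ is a complete Noetherian local domain. Since $\m^r \widehat{R} \cap R = \m^r$ gives $\ord_R(f) = \ord_{\widehat{R}}(f)$ for $f \in R$, and the lower inequality just established shows that the $v$-adic topology on $R$ is dominated by the $\m$-adic topology, any $\m$-Cauchy sequence is also $v$-Cauchy; this lets us extend $v$ continuously to an $\m\widehat{R}$-valuation $\hat{v}$ on $\widehat{R}$. The hypothesis that $\widehat{R}$ is a domain is exactly what ensures $\hat{v}$ is a genuine discrete valuation rather than degenerating across zero-divisors, and since $\ord_{\widehat{R}}$ restricts to $\ord_R$ on $R$, a bound $\hat{v}(f) \le C \cdot \ord_{\widehat{R}}(f)$ on $\widehat{R}$ pulls back to the desired bound on $R$.

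The main obstacle is the complete-domain case itself, which is the substance of Rees's theorem \cite{Ree}. The strategy I would follow is to introduce the Rees valuations $w_1,\ldots,w_s$ of $\m$---the finite set of divisorial valuations arising from the normalized blow-up of $\Spec R$ along $\m$---which by the integral-closure theory of powers of $\m$ satisfy $\ord_R(f) \le \min_i w_i(f)/e_i \le \ord_R(f) + O(1)$, where $e_i = w_i(\m)$. One is thereby reduced to bounding an arbitrary $\m$-valuation by a constant multiple of some $w_i$, and this last step is the heart of Izumi's theorem: it relies essentially on the analytic irreducibility of $R$, via finiteness of the integral closure of $\widehat{R}$ in its fraction field, to compare the valuation rings of the two $\m$-valuations on a common birational modification of $\Spec \widehat{R}$ and extract a linear comparison constant.
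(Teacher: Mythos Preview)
The paper does not prove this proposition at all: it is stated as a known result (``Izumi's theorem'') with a citation to Rees \cite{Ree} and is then used as a black box in the proof of Lemma~\ref{norm}. So there is nothing to compare your argument against within the paper itself.

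That said, your outline is a reasonable sketch of the standard route. The reduction of (1) to (2) and the lower inequality $\ord_R(f)\le v(f)$ are correct and elementary. The passage to the completion is the right move, though your phrasing ``extend $v$ continuously'' is a bit loose: the cleaner statement is that the valuation ring $\sO_v$, being essentially of finite type over $R$ with center $\m$, base-changes to give an $\m\widehat{R}$-valuation on $\widehat{R}$ (here the hypothesis that $\widehat{R}$ is a domain is indeed essential). Your final paragraph, however, does not really prove anything beyond what the paper already grants by citation: you invoke the Rees valuations of $\m$ and then say the comparison of an arbitrary $\m$-valuation with some $w_i$ ``is the heart of Izumi's theorem'' and ``relies essentially on analytic irreducibility.'' That is exactly the content of \cite{Ree}, so at the end you are citing the same source the paper cites. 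If the intent was to give a self-contained proof, the substantive step---extracting a linear comparison constant between two $\m$-valuations on an analytically irreducible local domain---remains a black box in your write-up as well.
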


Finally, as an application of Izumi's theorem, we obtain the lemma below, which is used in the proof of the main theorem.
Let $A \subseteq R$ be a finite extension of normal Noetherian excellent local domains and $K=\Frac(A), L=\Frac(R)$.
We consider $\rN=\rN_{L/K}:L^\times \to K^\times$.

\begin{lem}\label{norm}
	With the above notation, there exists a constant $M>0$ such that $\ord_A( \rN(r)) < M \cdot \ord_R(r)$ for all $r \neq 0 \in R$.
\end{lem}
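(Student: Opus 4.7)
The plan is to sandwich $\rN(r)$ between suitable valuations on $K$ and $L$ via two applications of Izumi's theorem (Proposition \ref{Izumi}), which is available because $A$ and $R$ are normal and excellent. We may assume $\dim A \ge 1$, since otherwise $A$ (and hence $R$) is a field and the statement is vacuous. Fix an $\m_A$-valuation $v_A$ of $K$; such a valuation exists because $A$ is a normal Noetherian excellent local domain of positive dimension (for instance, the order along a prime component of the exceptional divisor of the blowup of $\m_A$ works). By Izumi's theorem applied in $A$, we have $\ord_A(a) \le v_A(a)$ for every $a \in A$, so it is enough to bound $v_A(\rN(r))$ by a constant multiple of $\ord_R(r)$.

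Next, I would extend $v_A$ to a $\Q$-valued valuation $\bar v$ on an algebraic closure $\bar K \supseteq L$ and invoke the classical identity
\[
v_A(\rN_{L/K}(r)) \;=\; [L:K]_i \sum_{\sigma} \bar v(\sigma(r)),
\]
where $\sigma$ runs over the finitely many $K$-embeddings $L \hookrightarrow \bar K$ and $[L:K]_i$ denotes the inseparability degree. Each $\bar v \circ \sigma$ is a valuation on $L$ extending $v_A$; after rescaling by a positive rational it becomes a $\Z$-valued valuation $w_\sigma$ on $L$. The crucial point is to verify that each $w_\sigma$ restricts to an $\m_R$-valuation on $R$. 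Since $w_\sigma|_A$ is positive on $\m_A$ and $R$ is local and finite over $A$, the only prime of $R$ lying over $\m_A$ is $\m_R$, so the center of $w_\sigma$ on $R$ must equal $\m_R$. For the transcendence degree condition, note that $k(w_\sigma)$ is a finite extension of $k(v_A)$, which itself has transcendence degree $\dim A - 1$ over $A/\m_A$; combined with $R/\m_R$ being finite algebraic over $A/\m_A$, this yields $\mathrm{tr.deg}_{R/\m_R}\, k(w_\sigma) = \dim R - 1$, as required.

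Once each $w_\sigma$ is identified as an $\m_R$-valuation, Izumi's theorem applied in $R$ gives constants $C_\sigma > 0$ with $w_\sigma(r) \le C_\sigma \ord_R(r)$, and hence $\bar v(\sigma(r)) \le C'_\sigma \ord_R(r)$ after absorbing the rescaling factor. Summing over the finitely many embeddings $\sigma$ yields a constant $M_0$ with $v_A(\rN(r)) \le M_0 \ord_R(r)$, so that $M := M_0 + 1$ gives the claimed strict inequality for all $r$ with $\ord_R(r) > 0$ (the case where $r$ is a unit in $R$ is degenerate, since then both sides vanish). The main obstacle in this plan is the verification that each extension $w_\sigma$ of $v_A$ to $L$ is genuinely an $\m_R$-valuation on $R$; once that identification is in place, the remainder of the argument is a direct assembly of the classical norm-valuation formula with the two applications of Izumi's theorem.
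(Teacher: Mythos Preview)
Your argument is correct and follows essentially the same strategy as the paper's proof: fix an $\m_A$-valuation, pass to its extensions on $L$, express $w(\rN(r))$ as a weighted sum of these extensions, and then bound each extension by $\ord_R$ via Izumi's theorem. The only cosmetic difference is that the paper produces the extensions as the localizations of the integral closure of $\sO_w$ in $L$ and cites Fulton's formula $w(\rN(r))=\sum_i v_i(r)\,[k(v_i):k(w)]$, whereas you obtain them as $\bar v\circ\sigma$ for the $K$-embeddings $\sigma:L\hookrightarrow\bar K$ and use the equivalent identity $v_A(\rN(r))=[L:K]_i\sum_\sigma \bar v(\sigma(r))$; both routes yield the same finite family of $\m_R$-valuations and the same bound (and, as you noticed, the paper's proof itself only establishes $\le$, which is all that is needed in the application).
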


\begin{proof}
	Let $w$ be an $\m_A$-valuation and $\sO_w$ be the valuation ring associated to $w$.
        The integral closure $B=\overline{\sO_w}^L$ is a semilocal ring.
        Let $\sO_{v_1}, \dots , \sO_{v_n}$ be the localizations of $B$ by maximal ideals and $v_i$ be the valuation associated to $\sO_{v_i}$ for every $i$.
        Then $v_i$ is an $\m_R$-valuation for every $i$.
        By \cite[Lemma A.3]{Ful}, we have
        \[ w(\rN(r))=\sum_i v_i(r)\cdot [k(w):k(v_i)]\]
        for every $r \in R$.
	By Proposition \ref{Izumi} (2), there exists a constant $C_i>0$ such that $v_i(r) \le C_i \cdot \ord_R(r)$ for every $i$ and every $r \in R$.
When we put $M :=\sum_i C_i \cdot [k(w): k(v_i)]$, we have $\ord_A(\rN(r)) \le w(\rN(r)) \le M \cdot \ord_R(r)$ for every non-zero element $r \in R$.
\end{proof}

\subsection{Invariants on projective varieties}\label{projective variety}

In this subsection, we assume that $X$ is a normal projective variety over an $F$-finite field.

\begin{defn}
For an $\R$-Cartier $\R$-divisor $D$ on $X$, we define
\begin{eqnarray*}
|D|_{\Q} & \coloneqq & \left\{ E \ge 0 \mid E \sim_{\Q} D \right\} \hspace{0.5cm} \textup{and} \\
|D|_{\equiv} & \coloneqq & \left\{ E \ge 0 \mid E \equiv_{\num} D \right\} .
\end{eqnarray*}
\end{defn}

\begin{defn}
For a $\Q$-Cartier $\Q$-divisor $D$ on $X$, we define the \emph{stable base locus} of $D$ as
\[
\B(D) \coloneqq \bigcap_{E \in |D|_{\Q}} \Supp (E).
\]
For an $\R$-Cartier $\R$-divisor $D$ on $X$, we define the \emph{restricted base locus} of $D$ as
\[
\B_{-}(D) \coloneqq \bigcup_{A} \B(D+A),
\]
where $A$ runs through all ample $\R$-Cartier $\R$-divisors on $X$ such that $A+D$ is a $\Q$-Cartier $\Q$-divisor.
We remark that when $D$ is not pseudo-effective, we put $\B_{-}(D)=X$.
\end{defn}

\begin{defn}
Let $D$ be a $\Q$-Cartier $\Q$-divisor on $X$ and $v$ be a divisorial valuation.
We define the \emph{asymptotic order of vanishing} of $D$ along $v$ as
\[v(||D||)  \coloneqq  \inf _{E \in |D|_{\Q}} v(E) \in \R_{\ge 0} \cup \{ \infty \}. \]
\end{defn}

\begin{rem}\label{||D||}
For a $\Q$-Cartier $\Q$-divisor $D$ on $X$, we can define a graded sequence of ideals $\fa_\bullet (D)$ as 
\[
\fa_n (D)= \left\{ \begin{array}{ll} \fb(|nD|) & \textup{(if $nD$ is integral)} \\ 0 & \textup{(otherwise)} \end{array} \right. .
\]
Then we have $v(||D||)=v(\fa_\bullet(D))$.
\end{rem}

\begin{defn}[\textup{\cite[Definition 2.10]{CdB}}]\label{v_{num}}
Let $v$ be a divisorial valuation.
For every big $\R$-Cartier $\R$-divisor $D$ on $X$, we define the \emph{numerical vanishing order} of $D$ along $v$ as 
\[v_{\num}(D) \coloneqq  \inf _{E \in |D|_{\num}} v(E) \in \R_{\ge 0}.\]

For every pseudo-effective and non-big $\R$-Cartier $\R$-divisor $D$ on $X$, we define
\[ v_{\num}(D) \coloneqq \sup_A v_{\num}(D+A),\]
where $A$ runs through all ample $\R$-Cartier $\R$-divisors on $X$.
\end{defn}

\begin{prop}[\textup{\cite[Theorem A and Proposition 2.4]{ELMNP}}]\label{vnum basic}
Let $v$ be a divisorial valuation.
\begin{enumerate}
\item If $D$ is a big $\Q$-Cartier $\Q$-divisor, we have $v(||D||)=v_{\num}(D)$.
\item Let $\mathrm{Big}(X)_\R \subseteq \rN^1(X)_\R$ be the cone of numerical equivalence classes of big $\R$-divisors on $X$.
Then the map $v_{\num}:\mathrm{Big}(X)_\R \to  \R$ is continuous.
\item If $D$ and $E$ are big $\Q$-Cartier $\Q$-divisors, then $v_{\num}(D+E) \le v_\num(D) + v_\num(E)$
\end{enumerate}
\end{prop}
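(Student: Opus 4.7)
The plan is to prove the three parts in the order (3), (1), (2), since (3) is essentially formal, (1) is the crux, and (2) follows from (1) and (3) together with standard convexity arguments on the big cone.

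For part (3), I would argue directly. For any effective $F_1 \equiv_{\num} D$ and $F_2 \equiv_{\num} E$, the sum $F_1 + F_2$ is an effective divisor numerically equivalent to $D + E$, and the divisorial valuation $v$ is additive on effective divisors, so $v(F_1 + F_2) = v(F_1) + v(F_2)$. Taking the infimum over such pairs yields $v_{\num}(D+E) \le v_{\num}(D) + v_{\num}(E)$.

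For part (1), the inequality $v(\|D\|) \ge v_{\num}(D)$ is immediate from $|D|_{\Q} \subseteq |D|_{\equiv}$. For the reverse, I would exploit that $D$ is big via Kodaira's lemma, writing $D \sim_{\Q} A + N$ with $A$ ample $\Q$-Cartier and $N$ effective $\Q$-divisor; note that $v(\|A\|) = 0$ because $|mA|$ is base-point-free for $m \gg 0$. Given $E \in |D|_{\equiv}$ with $v(E) \le v_{\num}(D) + \epsilon$, for small $\delta > 0$ the $\R$-divisor $E + \delta A$ is effective and numerically equivalent to the big $\Q$-Cartier class $D + \delta A$. The key step is to use a Fujita-type approximation to produce, for each sufficiently divisible $m$, an integral divisor $F_m \in |m(D+\delta A)|$ with $\tfrac{1}{m} v(F_m)$ bounded above by $v(E) + O(\delta)$. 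Letting $m \to \infty$ and then $\delta \to 0$, and absorbing the ample perturbation $\delta A$ via (3), would then yield $v(\|D\|) \le v_{\num}(D) + \epsilon$, hence equality.

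For part (2), continuity on $\mathrm{Big}(X)_{\R}$ would follow by combining (1), (3), and the density of $\Q$-Cartier classes in $\rN^1(X)_{\R}$. Sublinearity from (3) extends to real big classes by rational approximation, giving the one-sided estimate $v_{\num}(D+H) - v_{\num}(D) \le v_{\num}(H)$ whenever all three classes are big; for $H$ small near the ample cone, $v_{\num}(H)$ is controlled via (1) by $v(\|H\|)$, which vanishes at ample classes. These local Lipschitz-type bounds promote to continuity on the entire big cone. The main obstacle is the reverse inequality in part (1): numerical equivalence is strictly coarser than $\Q$-linear equivalence, and transferring a representative from $|D|_{\equiv}$ to $|D|_{\Q}$ without inflating the valuation requires both Kodaira's lemma (to access bigness) and Fujita-type approximation (to convert $\R$-effective representatives into integral divisors in complete linear systems). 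This is precisely the step that would fail on the pseudoeffective boundary.
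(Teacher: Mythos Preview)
The paper does not give its own proof of this proposition; it is quoted with a citation to \cite[Theorem A and Proposition 2.4]{ELMNP} and used as input, so there is nothing in the paper to compare your argument against.

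On its own merits, your sketch for (1) has a real gap. You correctly isolate the hard direction $v(||D||)\le v_{\num}(D)$, but the step you call ``Fujita-type approximation'' does not do what you need: at that stage you only know $E+\delta A\equiv_{\num} D+\delta A$, not $E+\delta A\sim_{\Q} D+\delta A$, and Fujita approximation (which concerns volumes, or approximating graded linear series by ample ones on a modification) does not by itself convert numerical equivalence into $\Q$-linear equivalence. Nothing in your outline forces sections of $|m(D+\delta A)|$ to have $v$-value close to $v(E)$. The argument in \cite{ELMNP} proceeds differently: one first shows that $v(||\cdot||)$ is a numerical invariant on big $\Q$-classes. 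Writing $D\sim_{\Q} A+N$ with $A$ ample and $N\ge 0$, for any numerically trivial $\Q$-divisor $P$ the class $A+mP$ is again ample (ampleness is numerical), so $v(||A+mP||)=0$, and subadditivity plus homogeneity give
\[
m\,v(||D+P||)\;\le\; (m-1)\,v(||D||)+v(N)+v(||A+mP||)\;=\;(m-1)\,v(||D||)+v(N).
\]
Dividing by $m$ and letting $m\to\infty$ yields $v(||D+P||)\le v(||D||)$; replacing $P$ by $-P$ gives equality. Once this is known, any effective $E\equiv_{\num} D$ can be approximated (numerical equivalence being a $\Q$-linear condition) by effective $\Q$-divisors $E'\equiv_{\num} D$, and then $v(||D||)=v(||E'||)\le v(E')$, which gives (1); continuity (2) then follows by the convexity/homogeneity argument you indicate. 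Your treatments of (3) and of the easy inequality in (1) are fine.
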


The following follows from Proposition \ref{vnum basic}

\begin{cor}
Let $v$ be a divisorial valuation, $D$ be a pseudo-effective $\R$-Cartier $\R$-divisor, and  $(A_m)_{m\in \N}$ be a sequence of ample $\R$-Cartier $\R$-divisor divisors on $X$ such that $\lim_{m \to \infty}A_m=0 \in \rN^1(X)$.
Then $v_{\num}(D)=\lim_{m\to \infty}v_{\num}(D+A_m)$
\end{cor}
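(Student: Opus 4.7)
The plan is to split into the two cases distinguished by Definition \ref{v_{num}}, namely whether $D$ is big or only pseudo-effective. In the first case, $D$ lies in the open big cone and each $D+A_m$ is again big (big $+$ ample is big), with $D+A_m\to D$ in $\rN^1(X)_\R$. Continuity of $v_{\num}$ on $\mathrm{Big}(X)_\R$ from Proposition \ref{vnum basic}\,(2) then gives the conclusion immediately.

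For the case where $D$ is pseudo-effective but not big, I would prove the two inequalities separately. The upper bound $\limsup_m v_{\num}(D+A_m)\le v_{\num}(D)$ is tautological: since each $A_m$ is ample, the term $v_{\num}(D+A_m)$ already appears in the supremum that defines $v_{\num}(D)$.

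For the lower bound I would fix $\epsilon>0$ and, by the definition of $v_{\num}(D)$ as a supremum, choose an ample $\R$-divisor $A_0$ with $v_{\num}(D+A_0)\ge v_{\num}(D)-\epsilon$. Since $A_m\to 0$ in $\rN^1(X)_\R$ and the ample cone is open, $A_0-A_m$ is ample for all sufficiently large $m$. Writing
\[
D+A_0=(D+A_m)+(A_0-A_m),
\]
both summands are big, so I apply subadditivity of $v_{\num}$ on $\mathrm{Big}(X)_\R$ (Proposition \ref{vnum basic}\,(3), extended from $\Q$-divisors to $\R$-divisors by continuity) together with the vanishing $v_{\num}(A_0-A_m)=0$ for ample $\R$-divisors to obtain
\[
v_{\num}(D+A_0)\le v_{\num}(D+A_m).
\]
Hence $v_{\num}(D+A_m)\ge v_{\num}(D)-\epsilon$ for all $m\gg 0$, and letting $\epsilon\to 0$ completes the argument.

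The main technical point, and the only real obstacle, is the passage from $\Q$-coefficients to $\R$-coefficients in Proposition \ref{vnum basic}\,(3) and the claim $v_{\num}(H)=0$ for ample $\R$-divisors $H$. Both reduce to approximating $H$ (or $A_0-A_m$) by ample $\Q$-Cartier $\Q$-divisors, using that $v(\|nH\|)=0$ when $|nH|$ is basepoint-free (giving $v_{\num}=0$ in the $\Q$-case via Proposition \ref{vnum basic}\,(1)), and then invoking the continuity in Proposition \ref{vnum basic}\,(2) to transfer the inequalities across the big cone. Once these two continuity upgrades are in hand, the sandwich argument above closes out the proof.
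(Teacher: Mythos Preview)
Your argument is correct and matches the paper's intent: the paper gives no proof beyond the remark that the corollary follows from Proposition~\ref{vnum basic}, and your case split (continuity on the big cone for $D$ big; the sup definition plus subadditivity and $v_{\num}(H)=0$ for ample $H$ in the non-big case) is precisely how one unpacks that reference. The continuity upgrades you flag for Proposition~\ref{vnum basic}\,(3) and for the vanishing on ample $\R$-divisors are routine and go through exactly as you describe.
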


\begin{defn}\label{NNef}
For every $\Q$-Cartier $\Q$-divisor $D$ on $X$ such that $|D|_{\Q} \neq \emptyset$, we define
\[
\NNA(D) \coloneqq \bigcup_v \cen_X(v),
\]
where $v$ runs through all divisorial valuations such that $v(||D||)>0$.
When $|D|_{\Q}=\emptyset$, we define $\NNA(D)=X$.

For every pseudo-effective $\R$-Cartier $\R$-divisor $D$ on $X$, we define the \emph{non-nef locus} of $D$ as
\[
\NNef(D) \coloneqq \bigcup_v \cen_X(v)
\]
where $v$ runs through all divisorial valuations such that $v_{\num}(D)>0$.
When $D$ is not pseudo-effective, we define $\NNef(D)=X$.
\end{defn}

\begin{rem}
	For every $\R$-Cartier $\R$-divisor $D$ on $X$, $D$ is nef if and only if $\NNef(D)$\\
$= \emptyset$ (\cite[III, Lemma 2.7]{Nak}).
        In characteristic 0, $D$ is nef and abundant if and only if $\NNA(D) = \emptyset$ (\cite[Lemma 2.17]{CdB}) and hence $\NNA(D)$ is denoted by $\mathrm{NNA}(D)$ and called the \emph{non nef-abundant locus}.
        However, because we do not know whether this is true in positive characteristic, we avoid using the notation $\mathrm{NNA}(D).$ 
\end{rem}

\begin{defn}
Let $\D$ be an $\R$-Weil divisor on $X$, $D$ be a $\Q$-Cartier $\Q$-divisor on $X$, and $t$ be a positive number.
We define 
\[\tau(X, \D, t||D||) \coloneqq \tau(X, \D, \fa_{\bullet}(D)^t),\]
where $\fa_\bullet(D)$ is defined in Remark \ref{||D||}.
\end{defn}

\section{A proof of the main theorem}

In this section, we give a description of the trace map for the Frobenius morphism on a complete local ring (Proposition \ref{decompo R}).
By using this description, we prove the main theorem (Corollary \ref{main'''}).

\subsection{The trace map on the ring of formal power series}

Let $k$ be an $F$-finite field of characteristic $p>0$, let $d$ be a positive integer, let $A$ be the ring of formal power series $k[[x_1, \dots , x_d]]$, and let $K$ be the field of fractions of $A$.
Since $A$ is a regular local ring, we may assume that the canonical divisor $K_A$ on $A$ is trivial.
We want to consider $\Tr_{F^e_A, K_A}:F^e_*K \to K$.

For $\lambda=(\lambda_1, \dots , \lambda_d) \in \N^d$, we write $x^\lambda \coloneqq x_1^{\lambda_1}x_2^{\lambda_2} \cdots x_d^{\lambda_d} \in A$
and $\deg \lambda \coloneqq \sum_{i=1}^d \lambda_i$.

\begin{defn}\label{junjo}
We define the \emph{degree-lexicographic order} $\le$ in $\N^d$ as
$\lambda \le \mu$ if and only if $\deg \lambda < \deg \mu$, or
$\deg \lambda = \deg \mu$ and $\lambda \le \mu$ in the lexicographic order.
By this order, $\N^d$ becomes a totally ordered set.
\end{defn}

\begin{defn}
For every positive integer $e$, we denote 
\[I_e \coloneqq \left\{ \lambda =(\lambda_1, \dots ,\lambda_d) \in \N^d \mid  0 \le \lambda_i < p^e \textup{ for all } i \right\}.\]
This $I_e$ is also a totally ordered set by the degree-lexicographic order.

We denote $\mu_e \coloneqq (p^e-1, \dots , p^e-1) \in I_e$.
\end{defn}

\begin{defn}\label{dec}
	With the above notation, we write $A^{p^e} \cdot k$ for the compositum of rings $A^{p^e}$ and $k$, that is, 
        \[
        A^{p^e} \cdot k=\{ \sum_i a_i^{p^e} \cdot u_i \mid a_i \in A, u_i \in k \} \subseteq A.
        \]
        Similarly, we write $K^{p^e} \cdot k \subseteq K$ for the compositum of fields $K^{p^e}$ and $k$.
\end{defn}

\begin{lem}\label{decompo A}
	With the above notation, the following properties hold.
        \begin{enumerate}
        \item We have $A= \bigoplus_{\lambda \in I_e} (A^{p^e} \cdot k)x^\lambda$ and
        	$K= \bigoplus_{\lambda \in I_e} (K^{p^e} \cdot k)x^\lambda$.
        \item Let $\vt =\{ t_1, t_2, \dots , t_{l} \} $ be a basis of $k$ over $k^{p^e}$.
        	Then, 
                \[A^{p^e} \cdot k = \bigoplus_{1 \le i \le l} A^{p^e} \cdot t_i \textup{ and }
                K^{p^e} \cdot k = \bigoplus_{1 \le i \le l} K^{p^e} \cdot t_i.\]
        \item Let $\vt$ be as above. Then, 
                \[
                A=\bigoplus_{\lambda \in I_e , 1 \le i \le l} A^{p^e} \cdot (t_i x^\lambda) \textup{ and } 
                K=\bigoplus_{\lambda \in I_e , 1 \le i \le l} K^{p^e} \cdot (t_i x^\lambda).
                \]
                In other words, 
                \[
                F^e_*A=\bigoplus_{\lambda \in I_e , 1 \le i \le l} A \cdot F^e_*(t_i x^\lambda) \textup{ and } 
                F^e_*K=\bigoplus_{\lambda \in I_e , 1 \le i \le l} K \cdot F^e_*(t_i x^\lambda),
                \]
where for any $f \in A$, we denote by $F^e_*(f)$ the element of $F^e_*A$ which corresponds to $f$ by the canonical group isomorphism $A \xrightarrow{\sim} F^e_*A$.
        \item Let $\vt$ be as above. For every $1 \le j \le l$, the projection
		\[
        	p_{\vt,(\mu_e,j)}:F^e_*A=\bigoplus _{\lambda \in I_e, 1 \le i \le l} A\cdot F^e_*(t_ix^\lambda) \to A \cdot F^e_*(t_j x^{\mu_e})=A
        	\]
        	satisfies $\Hom_A(F^e_*A, A) =F^e_*A \cdot p_{\vt,(\mu_e, j)}$.
        \item Let $\vt$ be as above. For every $1 \le j \le l$, we have $\Tr_{F^e_A, K_A} = p_{\vt, (\mu_e ,j)}: F^e_* K \to K$.
        \end{enumerate}
\end{lem}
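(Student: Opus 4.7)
The plan is to treat parts (1)--(3) by direct manipulation of power series expansions, part (4) by an explicit construction of generators via a linear algebra argument over $k$, and part (5) by identifying both the Frobenius trace and a suitable projection as free generators of a rank-one module.

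For (1)--(3), any $f \in A$ has a unique expansion $f = \sum_\nu c_\nu x^\nu$ with $c_\nu \in k$; writing $\nu = p^e \alpha + \lambda$ uniquely with $\lambda \in I_e$ and $\alpha \in \N^d$ and grouping terms yields
\[
f = \sum_{\lambda \in I_e} \Bigl( \sum_\alpha c_{p^e \alpha + \lambda} (x^\alpha)^{p^e} \Bigr) x^\lambda,
\]
with the inner sum in $A^{p^e} \cdot k$. Uniqueness of the decomposition reduces to uniqueness of the original expansion. For the $K$-version of (1), every $r \in K$ can be rewritten as $f/h^{p^e}$ with $f, h \in A$ via $r = f h^{p^e - 1}/h^{p^e}$, and then the $A$-decomposition of $f$, divided by $h^{p^e} \in A^{p^e}$, gives the desired form. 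Part (2) is immediate from the definition of $\vt$ as a $k^{p^e}$-basis of $k$, and (3) follows by combining (1) and (2).

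For (4), by (3) the module $F^e_* A$ is $A$-free with basis $\{F^e_*(t_i x^\lambda)\}_{(\lambda, i)}$, so $\Hom_A(F^e_* A, A)$ is $A$-free of rank $p^{ed} l$ with dual basis $\{p_{\vt, (\lambda, i)}\}$. To establish $\Hom_A(F^e_* A, A) = F^e_* A \cdot p_{\vt, (\mu_e, j)}$, it suffices to realize each $p_{\vt, (\lambda, i)}$ as $c \cdot p_{\vt, (\mu_e, j)}$ for some $c \in F^e_* A$. I would try $c$ of the form $t \cdot x^{\mu_e - \lambda}$ with $t \in k$, so that
\[
(c \cdot p_{\vt, (\mu_e, j)})(F^e_*(t_{i'} x^{\lambda'})) = p_{\vt, (\mu_e, j)}\bigl(F^e_*(t \cdot t_{i'} \cdot x^{\mu_e - \lambda + \lambda'})\bigr).
\]
Since $\lambda, \lambda' \in I_e$, the exponent $\mu_e - \lambda + \lambda'$ lies in $\mu_e + p^e \N^d$ if and only if $\lambda' = \lambda$, so the right-hand side vanishes unless $\lambda' = \lambda$. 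When $\lambda' = \lambda$, it equals the $t_j$-coefficient of $t \cdot t_{i'} \in k$ in the $k^{p^e}$-basis $\vt$. The $k^{p^e}$-linear map $T \colon k \to k^l$ sending $t$ to the tuple of $t_j$-coefficients of $(t \cdot t_{i'})_{i' = 1}^l$ is injective (if $T(t) = 0$, then $t \cdot k \subseteq \sum_{l' \neq j} k^{p^e} t_{l'}$, forcing $t = 0$), hence an isomorphism by dimension count, yielding the required $t$.

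For (5), the $F^e_* A$-linear map $F^e_* A \to \Hom_A(F^e_* A, A)$ given by $c \mapsto c \cdot p_{\vt, (\mu_e, j)}$ is surjective by (4) between free $A$-modules of the same finite rank $p^{ed} l$, and hence is an isomorphism; so $p_{\vt, (\mu_e, j)}$ is a free $F^e_* A$-module generator of $\Hom_A(F^e_* A, A)$. The trace $\Tr_{F^e_A}$ is likewise a free generator by the assumption $F^! \omega_A \cong \omega_A$ together with $\omega_A \cong A$, so the two maps agree on $F^e_* A$ up to multiplication by a unit in $F^e_* A$. Localizing at $A \setminus \{0\}$ extends this relation to $F^e_* K$, and since $\Tr_{F^e_A, K_A}$ is determined only up to such unit multiples, we may take it to equal $p_{\vt, (\mu_e, j)}$ for any $j$. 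The main obstacle will be the linear-algebraic step in (4): precisely identifying when the $(\mu_e, j)$-component of a product of basis elements is nonzero, and exploiting the injectivity of $T$; once this is in place, (5) reduces to a clean uniqueness-up-to-units comparison.
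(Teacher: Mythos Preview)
Your proposal is correct and follows essentially the same approach as the paper, which simply records that (1)--(4) are easy and that (5) follows from (4) together with Example~\ref{tr}(2); you have merely supplied the routine details the paper omits. In particular, your argument for (5)---that both $\Tr_{F^e_A,K_A}$ and $p_{\vt,(\mu_e,j)}$ are free generators of the rank-one $F^e_*A$-module $\Hom_A(F^e_*A,A)$ and hence agree up to the ambient unit ambiguity---is exactly the content of the paper's appeal to (4) and Example~\ref{tr}(2).
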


\begin{proof}
	(1), (2), (3), and (4) are easy.
        (5) follows from (4) and Example \ref{tr} (2).
\end{proof}

\begin{defn}
        For every ideal $J \subseteq A$, we define
        \[
        J^{[p^e]} \cdot k=\{ \sum_i a_i^{p^e} \cdot u_i \mid a_i \in J, u_i \in k \} \subseteq A^{p^e} \cdot k.
	\]
\end{defn}

\begin{lem}\label{compose ideal}
With the above notation, the following properties hold.
        \begin{enumerate}
        \item $J^{[p^e]} \cdot k \subseteq A^{p^e} \cdot k$ is an ideal.
        \item Let $\vt =\{ t_1, t_2, \dots , t_{l} \} $ be a basis of $k$ over $k^{p^e}$ and $\alpha=\sum_{1 \le i \le l} a_i^{p^e}t_i \in A^{p^e} \cdot k$.
	Then $\alpha \in J^{[p^e]} \cdot k$ if and only if $a_i \in J$ for all $i$.
        \item If $J$ is the maximal ideal $\m_A$ of $A$, the assertion in $(2)$ is equivalent to $\alpha \in \m_A$.
        \end{enumerate}
\end{lem}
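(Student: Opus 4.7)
The plan is to reduce everything to the direct sum decomposition $A^{p^e}\cdot k = \bigoplus_{i=1}^{l} A^{p^e}\cdot t_i$ from Lemma \ref{decompo A}(2), so that the coefficients $a_i$ in $\alpha = \sum_i a_i^{p^e} t_i$ are uniquely determined by $\alpha$, and then to exploit Frobenius additivity in characteristic $p$.

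For (1), I would check the two axioms directly: closure under addition is immediate from the definition, and for closure under multiplication by $A^{p^e}\cdot k$, I would take $\beta=\sum_j b_j^{p^e}v_j\in A^{p^e}\cdot k$ and $\alpha=\sum_i a_i^{p^e}u_i\in J^{[p^e]}\cdot k$ and expand
\[
\beta\alpha = \sum_{i,j}(b_j a_i)^{p^e}\, v_j u_i,
\]
noting that $b_j a_i \in J$ since $J$ is an ideal of $A$.

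For (2), the ``if'' direction is immediate from the definition of $J^{[p^e]}\cdot k$. For the ``only if'' direction, I would start from an arbitrary representation $\alpha=\sum_{j}b_j^{p^e}u_j$ with $b_j\in J$ and $u_j\in k$, then expand each $u_j$ in the $k^{p^e}$-basis $\vt$ as $u_j=\sum_i r_{ij}^{p^e}t_i$ (with $r_{ij}\in k$), which is possible since $\{t_i\}$ is a basis of $k$ over $k^{p^e}$ and every element of $k^{p^e}$ is a $p^e$-th power in $k$. Using Frobenius additivity $(x+y)^{p^e}=x^{p^e}+y^{p^e}$ in characteristic $p$, one gets
\[
\alpha = \sum_i\Bigl(\sum_j r_{ij}b_j\Bigr)^{p^e} t_i.
\]
Comparing with the given decomposition $\alpha=\sum_i a_i^{p^e}t_i$, the uniqueness from Lemma \ref{decompo A}(2) forces $a_i^{p^e}=(\sum_j r_{ij}b_j)^{p^e}$, and since Frobenius is injective on the domain $A$, this gives $a_i=\sum_j r_{ij}b_j\in J$ (as $r_{ij}\in k\subseteq A$ and $b_j\in J$).

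For (3), the forward direction just uses (2): each $a_i^{p^e}t_i$ then lies in $\m_A^{[p^e]}\subseteq \m_A$. The converse is the only place requiring real thought. I would argue by contrapositive: if some $a_{i_0}\notin\m_A$, then reducing modulo $\m_A$ and using Frobenius additivity shows $a_i^{p^e}\equiv \overline{a}_i^{p^e}\pmod{\m_A}$, hence
\[
\overline{\alpha}=\sum_i \overline{a}_i^{p^e}\,\overline{t}_i \in k=A/\m_A,
\]
and since $\overline{a}_{i_0}\neq 0$ implies $\overline{a}_{i_0}^{p^e}\neq 0$, the $k^{p^e}$-linear independence of $\vt$ in $k$ gives $\overline{\alpha}\neq 0$, contradicting $\alpha\in\m_A$.

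The only mildly subtle step is the second one: extracting a $p^e$-th root from each coefficient after applying Frobenius additivity, which relies simultaneously on $\vt$ being a $k^{p^e}$-basis (so that $u_j$ has coefficients in $k^{p^e}$, hence are $p^e$-th powers), on Frobenius additivity to collect terms, and on the uniqueness of decomposition and injectivity of Frobenius to recover the claimed $a_i$'s.
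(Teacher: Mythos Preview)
Your proof is correct and follows essentially the same approach as the paper's: for (2) you expand an arbitrary representation $\alpha=\sum_j b_j^{p^e}u_j$ by writing each $u_j$ in the $k^{p^e}$-basis $\vt$ and then use uniqueness of the decomposition (together with injectivity of Frobenius, which the paper leaves implicit) to conclude $a_i\in J$, and for (3) you reduce modulo $\m_A$ and invoke the $k^{p^e}$-linear independence of $\vt$. The paper's proof is simply a terser version of exactly this argument.
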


\begin{proof}
	(1) is trivial. 
        For (2), we assume that $\alpha \in J^{[p^e]} \cdot k$.
        We can write $\alpha= \sum_{j} r_j^{p^e} u_j$ for some $r_j \in J$ and $u_j \in k$.
        Furthermore, we can write $u_j=\sum_{1 \le i \le l}u_{i,j}^{p^e} t_i$ for some $u_{i,j} \in k$.
        Then we have $a_i=\sum_{j} u_{i,j}r_j$. Hence $a_i \in J$.
        For (3), we assume that $\alpha \in \m_A$.
        Then we have $0=\sum_{i} \overline{a_i}^{p^e} t_i \in A / \m_A = k$.
        Hence, $\overline{a_i}=0$ for all $i$.
\end{proof}

\subsection{Trace maps on complete local rings}
Let $(R, \m)$ be an F-finite Noetherian complete normal local ring of characteristic $p>0$ with $\dim R=d>0$ and $k=R/\m$ be the residue field.

\begin{prop}[\cite{KS}, Cohen-Gabber Theorem]\label{Cohen}
Let $A= k[[x_1, x_2, \dots , x_d]]$ be the ring of formal power series.
Then there exists a coefficient field $k \subseteq R$ and a system of parameters $y_1, \dots, y_d \in R$ with the following property:
if $A \subseteq R$ is the module finite inclusion defined by the inclusion $k \subseteq R$ and the system of parameter $y_1, \dots, y_d$, then the field extension $\Frac(A) \subseteq \Frac(R)$ defined by $A \subseteq R$ is separable.
\end{prop}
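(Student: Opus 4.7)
The final statement is the Cohen--Gabber structure theorem in its $F$-finite refinement due to Kurano--Shimomoto, and my plan is to appeal directly to \cite{KS}; let me sketch the underlying strategy.

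First I would invoke the classical Cohen structure theorem to obtain some coefficient field $k \subseteq R$ and a system of parameters $z_1, \dots, z_d \in \m$ making the induced map $A_0 = k[[x_1, \dots, x_d]] \to R$, $x_i \mapsto z_i$, a module-finite injection. The only remaining point is to arrange separability of $\Frac(R) / \Frac(A_0)$, which need not hold for the original choice of parameters.

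The key technical input is a $p$-basis analysis. Fix a $p$-basis $\{t_1, \dots, t_r\}$ of $k$ over $k^p$, which exists because the $F$-finiteness of $R$ forces $[k:k^p] < \infty$. For any system of parameters $y_1, \dots, y_d \in \m$ giving a module-finite inclusion $A := k[[y_1, \dots, y_d]] \hookrightarrow R$, the set $\{t_1, \dots, t_r, y_1, \dots, y_d\}$ is a $p$-basis of $\Frac(A)$ over $\Frac(A)^p$. Since $R$ is $F$-finite, so is $\Frac(R)$, and a finite extension of $F$-finite fields is separable precisely when a $p$-basis of the base remains $p$-independent upstairs. Thus the problem reduces to choosing parameters $y_1, \dots, y_d \in \m$ so that $\{t_1, \dots, t_r, y_1, \dots, y_d\}$ remains $p$-independent in $\Frac(R)$ modulo $\Frac(R)^p$.

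I would then carry out a perturbation argument starting from the initial $z_i$: set $y_i := z_i + u_i$ for some $u_i \in \m^2$, which preserves the system-of-parameters condition. At each stage $i$, the locus of $u_i$'s that make $y_i$ fail to be $p$-independent over $\Frac(R)^p(t_1, \dots, t_r, y_1, \dots, y_{i-1})$ is a proper closed condition on a suitable finite-dimensional truncation of $\m^2$, so one can successively choose the $u_i$'s avoiding these loci and then lift to honest elements of $\m^2$ using completeness of $R$. The main obstacle, and the technical heart of \cite{KS}, is making this genericity step fully rigorous while simultaneously allowing the coefficient field $k \subseteq R$ to be adjusted so that it contains the chosen $p$-basis $\{t_1, \dots, t_r\}$; changes to $k$ and to the $y_i$ inside $R$ interact non-trivially, and the bookkeeping needed to execute both modifications in a compatible way is delicate.
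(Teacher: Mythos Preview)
The paper does not prove this proposition at all: it is stated as a citation of \cite{KS} and used as a black box, with no argument given. Your proposal goes further than the paper by sketching the actual Kurano--Shimomoto strategy (Cohen structure theorem followed by a $p$-basis/perturbation argument to force separability), which is a reasonable outline of what happens in \cite{KS}; but there is nothing in the paper's own text to compare it against.
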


Fix inclusions $k \subseteq R$ and $A \subseteq R$ as in Proposition \ref{Cohen}.
Then we can define $R^{p^e} \cdot k$ and $L^{p^e} \cdot k$ as in Definition \ref{dec}, where $L \coloneqq \Frac (R)$.
Similarly, we can define $J^{[p^e]} \cdot k \subseteq R^{p^e} \cdot k$ for every ideal $J \subseteq R$.
We can take $K_A=0$ and $K_R=\Ram_\pi$, where $\pi: \Spec R \to \Spec A$ is the finite morphism induced by $A \subseteq R$.
\begin{prop}\label{decompo R}
	With the above notation, the following properties hold.
	\begin{enumerate}
        \item Let $\vt =\{ t_1, t_2, \dots , t_{l} \} $ be a basis of $k$ over $k^{p^e}$.
        Then we have 
        \[
        L=\bigoplus_{\lambda \in I_e} (L^{p^e} \cdot k) x^\lambda=\bigoplus_{\lambda \in I_e, i}L^{p^e} \cdot (t_ix^\lambda).
        \]
        \item For every $1 \le j \le l$, we have $\Tr_{F^e_R, K_R}=p_{\vt, (\mu_e, j)};F^e_* L \to L$.
        \end{enumerate}
\end{prop}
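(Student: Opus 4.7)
\medskip

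\noindent\textbf{Proof plan for Proposition \ref{decompo R}.}

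\emph{Part (1): The decomposition of $L$.} The plan is to show the second equality $L=\bigoplus_{\lambda\in I_e,\,i} L^{p^e}\cdot(t_ix^\lambda)$ first (the first equality then follows by regrouping via Lemma~\ref{decompo A}(2)). The key point is to lift the decomposition of Lemma~\ref{decompo A}(3) from $K=\Frac(A)$ to $L=\Frac(R)$, using the separability of $L/K$ supplied by Proposition~\ref{Cohen}. Since $L/K$ is separable, $L$ and $K^{1/p^e}$ are linearly disjoint over $K$, hence $L\otimes_K K^{1/p^e}\cong L\cdot K^{1/p^e}=L^{1/p^e}$, so
\[
[L:L^{p^e}]=[L^{1/p^e}:L]=[K^{1/p^e}:K]=[K:K^{p^e}]=p^{ed}\cdot l,
\]
where $l=[k:k^{p^e}]$. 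In particular $L=L^{p^e}\cdot K$. Combining this with the decomposition $K=\bigoplus_{\lambda,i}K^{p^e}\cdot(t_ix^\lambda)$ of Lemma~\ref{decompo A}(3), we see that the $p^{ed}\cdot l$ elements $\{t_ix^\lambda\}_{\lambda\in I_e,\,1\le i\le l}$ span $L$ over $L^{p^e}$, and because this number matches the degree $[L:L^{p^e}]$ the spanning set must be a basis; this gives the asserted direct sum decomposition.

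\emph{Part (2): Identification of the trace.} Having (1), both $\Tr_{F^e_R,K_R}$ and $p_{\vt,(\mu_e,j)}$ are $L$-linear maps $F^e_*L\to L$: the first is $L$-linear by the definition following Remark after Example~\ref{tr} (it is a $K(X)$-homomorphism), and the second is the projection onto one summand in the $L$-module decomposition of part (1). The plan is to show they agree on $F^e_*K\subseteq F^e_*L$ and then use $L$-linearity to conclude. For the agreement on $F^e_*K$, I would apply Proposition~\ref{Tr on fin} to the finite generically separable morphism $\pi\colon\Spec R\to\Spec A$ (taking $K_A=0$ and $K_R=\Ram_\pi$ as the paper prescribes), which gives $\Tr_{F^e_R,K_R}|_{F^e_*K}=\Tr_{F^e_A,K_A}$. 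By Lemma~\ref{decompo A}(5), the right-hand side equals $p_{\vt,(\mu_e,j)}|_{F^e_*K}$. Finally, since the basis $\{F^e_*(t_ix^\lambda)\}$ of $F^e_*L$ over $L$ provided by (1) lies inside $F^e_*K$, the two $L$-linear maps agree on a generating set, hence everywhere.

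\emph{Main obstacle.} The only nontrivial step is the direct-sum decomposition in part (1); once that is in place, part (2) is a short bookkeeping argument combining Proposition~\ref{Tr on fin} with Lemma~\ref{decompo A}(5). The obstacle in (1) is to convert the separability of $L/K$ into the two assertions $L=L^{p^e}\cdot K$ and $[L:L^{p^e}]=p^{ed}\cdot l$ simultaneously, which is precisely what the linear disjointness argument $L\otimes_K K^{1/p^e}\cong L^{1/p^e}$ provides. No additional deep input (for example, no use of differentials, $F$-finiteness of $L$ as a stand-alone fact, or flatness of Frobenius) is needed beyond this.
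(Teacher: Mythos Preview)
Your proposal is correct and follows essentially the same route as the paper. The paper's proof is a two-line argument: for (1) it invokes the identity $F^e_*L = L\otimes_K F^e_*K$ (citing \cite[Lemma~3.3]{ST1}), which is exactly the linear-disjointness consequence of separability that you spell out, and for (2) it appeals directly to Proposition~\ref{Tr on fin}; your more detailed unpacking (reducing to $F^e_*K$ via Proposition~\ref{Tr on fin}, then applying Lemma~\ref{decompo A}(5), then extending by $L$-linearity using the basis from (1)) is precisely what that one-line citation encodes.
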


\begin{proof}
Since $F^e_*L= L \otimes_K F^e_* K$ (cf. \cite[Lemma 3.3]{ST1}), we have (1). 
By Proposition \ref{Tr on fin}, we have (2).
\end{proof}

\begin{lem}\label{compose ideal 2}
	Let $J$ be an ideal of $R$.
        Then the following properties hold.
        \begin{enumerate}
        \item $J^{[p^e]} \cdot k \subseteq R^{p^e} \cdot k$ is an ideal.
        \item  Let $\vt =\{ t_1, t_2, \dots , t_{l} \} $ be a basis of $k$ over $k^{p^e}$ and $\alpha=\sum_{1 \le i \le l} a_i^{p^e}t_i \in L^{p^e} \cdot k$.
	Then $\alpha \in J^{[p^e]} \cdot k$ if and only if $a_i \in J$ for all $i$.
        \end{enumerate}
\end{lem}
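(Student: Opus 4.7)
The statement is the analogue, for the complete local ring $R$, of Lemma \ref{compose ideal}, and my plan is to follow the same two-step strategy, with the extra input being the direct sum decomposition from Proposition \ref{decompo R}(1).

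For part (1), the plan is a direct verification. $J^{[p^e]} \cdot k$ is clearly closed under addition, so the only thing to check is that it is stable under multiplication by elements of $R^{p^e} \cdot k$. Given $\sum_i a_i^{p^e} u_i \in J^{[p^e]} \cdot k$ (with $a_i \in J$, $u_i \in k$) and $\sum_j b_j^{p^e} v_j \in R^{p^e} \cdot k$ (with $b_j \in R$, $v_j \in k$), their product equals $\sum_{i,j} (a_i b_j)^{p^e}(u_i v_j)$, and each $a_i b_j$ lies in $J$ since $J$ is an ideal of $R$. So this term-by-term argument settles (1).

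For part (2), one direction is immediate from the definition of $J^{[p^e]} \cdot k$: if $a_i \in J$ for all $i$, then $\alpha = \sum_i a_i^{p^e} t_i$ is in $J^{[p^e]} \cdot k$. For the converse, suppose $\alpha \in J^{[p^e]} \cdot k$, so we may write $\alpha = \sum_j r_j^{p^e} u_j$ with $r_j \in J$ and $u_j \in k$. Since $\vt = \{t_1, \dots, t_l\}$ is a basis of $k$ over $k^{p^e}$, each $u_j$ admits a unique expression $u_j = \sum_i u_{i,j}^{p^e} t_i$ with $u_{i,j} \in k$. Substituting and rearranging gives
\[
\alpha = \sum_j r_j^{p^e} \Bigl(\sum_i u_{i,j}^{p^e} t_i\Bigr) = \sum_i \Bigl(\sum_j u_{i,j} r_j\Bigr)^{p^e} t_i = \sum_i b_i^{p^e} t_i,
\]
where $b_i := \sum_j u_{i,j} r_j \in J$. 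The decisive step is now to invoke the direct sum decomposition $L^{p^e} \cdot k = \bigoplus_i L^{p^e} \cdot t_i$ supplied by Proposition \ref{decompo R}(1): the expansion of $\alpha \in L^{p^e} \cdot k$ in the basis $\{t_i\}$ over $L^{p^e}$ is unique, so $a_i^{p^e} = b_i^{p^e}$, hence $a_i = b_i \in J$ for every $i$.

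I don't expect a real obstacle; the only subtle point is being careful that the coefficients $a_i$ in the statement lie a priori only in $L$ (not in $R$), so the identification $a_i = b_i \in J$ needs the uniqueness of the $\{t_i\}$-expansion over $L^{p^e}$, not merely over $R^{p^e}$. This is exactly what Proposition \ref{decompo R}(1) provides. The whole argument is a verbatim transcription of the proof of Lemma \ref{compose ideal}, with Proposition \ref{decompo R}(1) taking the role that Lemma \ref{decompo A}(2) played in the power series case.
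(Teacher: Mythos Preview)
Your proposal is correct and follows essentially the same approach as the paper, which simply states that the proof is similar to that of Lemma~\ref{compose ideal}. Your explicit invocation of Proposition~\ref{decompo R}(1) to justify the uniqueness of the $\{t_i\}$-expansion over $L^{p^e}$ is exactly the extra ingredient needed in passing from $A$ to $R$, and your remark about the coefficients $a_i$ lying a priori in $L$ rather than $R$ is well taken.
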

\begin{proof}
	The proof is similar to the proof of Lemma \ref{compose ideal}.
\end{proof}

\subsection{A proof of the main theorem}

\begin{thm}\label{main}
	Let $(R, \m)$ be an F-finite Noetherian normal local domain of characteristic $p>0$ and $d=\dim R>0$.
	Assume that $\D$ is an effective $\Q$-Weil divisor on $R$ such that $K_R+\D$ is $\Q$-Cartier $\Q$-divisor.
	Then there exist positive integers $M$ and $q=p^{e}$ such that for all $n\ge 1$ and for all $r \neq 0 \in R$ with $\ord_R(r)<q^{n-1}/M$, we have 
        \[\tau(R,\D)=\tau ( R, \D+\frac{ \Div (r)}{q^n} ) .\]
\end{thm}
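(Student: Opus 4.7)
The plan is to carry out a sequence of reductions and conclude with an explicit trace computation. First, Proposition \ref{test basic}(2) lets us pass to the $\m$-adic completion, so we may assume $R$ is complete. Then Proposition \ref{non gor test} writes $\tau(R,\D)=\sum \tau(R,\D'_i)$ as a finite (by Noetherianity) sum over $\D'_i\ge\D$ with $(p^e-1)(K_R+\D'_i)$ integral Cartier for a common $e$ (obtained by enlarging). A monotonicity sandwich shows that if the theorem holds for each $\D'_i$ with a common $M$ and $q=p^e$, then it holds for $\D$ with the same $M$ and $q$; so we may further assume $(p^e-1)(K_R+\D)=\Div(u)$ is principal. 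Set $J:=\tau(R,\D)$ and $U_n := u^{(p^{en}-1)/(p^e-1)}$. By Proposition \ref{phi principal} applied with $e$ replaced by $en$, $\tau(R,\D+\Div(r)/q^n)=\phi_{en,\D}(rJ)$, and since $\phi_{en,\D}(rJ)\subseteq\phi_{en,\D}(J)=J$ is automatic, the goal becomes the reverse inclusion $\phi_{en,\D}(rJ)\supseteq J$.

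Next I invoke the Cohen--Gabber theorem (Proposition \ref{Cohen}) to realize $R$ as a module-finite separable extension of $A=k[[x_1,\dots,x_d]]$ with fraction fields $K\subseteq L$, and use Proposition \ref{decompo R} to describe $\Tr_{F^{en}_R,K_R}$ as the projection $p_{\vt,(\mu_{en},j)}$ in the $L^{p^{en}}$-basis $\{t_ix^\lambda\}_{\lambda\in I_{en},\,1\le i\le l}$ of $L$. To reduce from $r\in R$ to an element of $A$, I use the norm: since $r$ is integral over $A$, we have $\rN_{L/K}(r)=r\cdot r''$ for some $r''\in R$, so $rJ\supseteq\rN_{L/K}(r)\cdot J$ and therefore $\phi_{en,\D}(rJ)\supseteq\phi_{en,\D}(\rN_{L/K}(r)\cdot J)$. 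Lemma \ref{norm} supplies the constant $M>0$ with $\ord_A(\rN_{L/K}(r))<M\cdot\ord_R(r)$, so the hypothesis $\ord_R(r)<q^{n-1}/M$ forces $\ord_A(\rN_{L/K}(r))<q^{n-1}$. It then remains to prove: for every $f\in A$ with $\ord_A(f)<q^{n-1}$, one has $\phi_{en,\D}(fJ)=J$.

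This final assertion is the technical heart of the argument. The plan is to work modulo $\m J$ and conclude by Nakayama's lemma applied to the finitely generated $R$-module $J$. Expanding $f$ in the $A^{p^{en}}$-basis of $A$ from Lemma \ref{decompo A}(3), the hypothesis $\ord_A(f)<q^{n-1}<q^n$ guarantees some index $(\lambda_0,i_0)$ with $\lambda_0\in I_{en}$ at which $f$ has a coefficient in $A^{p^{en}}$ that is a unit modulo $\m_A^{[p^{en}]}\cdot k$. Given any $s\in J$, pick $s'\in J$ with $\phi_{en,\D}(s')=s$ via $\phi_{en,\D}(J)=J$, and construct $t\in J$ by multiplying $s'$ by an auxiliary factor implementing the monomial shift from $x^{\mu_{en}}$ to $x^{\mu_{en}-\lambda_0}$; the condition $\lambda_0\in I_{en}$ (equivalently, $\mu_{en}-\lambda_0\ge 0$ coordinatewise) is exactly what makes this shift legal. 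A careful bookkeeping of the projection $p_{\vt,(\mu_{en},j)}$, using Lemma \ref{compose ideal 2} to control which pieces land in $J^{[p^{en}]}\cdot k$ and absorbing the twist by $U_n$, then yields $\phi_{en,\D}(ft)\equiv s\pmod{\m J}$, and Nakayama concludes.

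The main obstacle lies in this last step: the construction of $t$ must take place inside $J$ (not merely in $R$), so when $J\subsetneq R$ one cannot freely pick $t$ as a monomial, and the approximation must exploit the $\phi_{en,\D}$-compatibility of $J$ together with Lemma \ref{compose ideal 2} to match coefficients in the $R^{p^{en}}\cdot k$-decomposition. Coordinating this construction with the ramification data encoded in $U_n$ through $K_R=\Ram_{\pi}$, so that only the $(\mu_{en},j)$-component survives modulo $\m J$, is the delicate part of the proof.
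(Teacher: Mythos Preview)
Your overall architecture---complete, invoke Cohen--Gabber, reduce from $r\in R$ to $a\in A$ via the norm and Lemma~\ref{norm}, then settle the case $a\in A$ with $\ord_A(a)<q^{n-1}$---matches the paper exactly. The gap is in the ``technical heart,'' and it is a real one.

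Your reduction via Proposition~\ref{non gor test} to a single $\D$ with $(q-1)(K_R+\D)=\Div(u)$ is not what the paper does, and the difference matters. The paper instead proves a perturbation lemma (Lemma~\ref{perturb}): it constructs a descending family $\D\le\cdots\le\D_2\le\D_1$ with $(q^n-1)(K_R+\D_n)=\Div(f^{q^{n-1}})$ for a \emph{fixed} $f$, and $\tau(R,\D_n)=\tau(R,\D)$ for all $n$. The point is that the twist at level $en$ is the $q^{n-1}$-th power of the twist at level $e$. With your $\D$, the twist at level $en$ is $U_n=u^{1+q+\cdots+q^{n-1}}$, which is not a $q^{n-1}$-th power, and this is exactly what obstructs your final step.

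Here is why. The paper takes $\alpha\in\tau(R,\D_n)$, writes $\alpha=\Tr_{F^e}(F^e_*g)$ with $g\in f\cdot\tau(R,\D_n)$ using $\phi_{e,\D_1}$, decomposes $g=\sum_{\lambda\in I_e}g_\lambda x^\lambda$ at level $e$, and then passes to level $en$ by considering $h:=g^{q^{n-1}}\cdot a$. The $q^{n-1}$-th power forces $g^{q^{n-1}}=\sum_{\lambda\in I_e}g_\lambda^{q^{n-1}}x^{q^{n-1}\lambda}$ to be supported on the \emph{sparse} sublattice $q^{n-1}I_e\subset I_{en}$. Multiplying by $a=\sum_{\mu\in I_{en}}a_\mu x^\mu$ with a unit coefficient at some $\mu_0$ of degree $<q^{n-1}$ then gives, for each $\lambda_0\in I_e$, a carry-free index $\xi_0=q^{n-1}\lambda_0+\mu_0\in I_{en}$ at which $h_{\xi_0}\equiv g_{\lambda_0}^{q^{n-1}}$ modulo terms coming from smaller $\lambda$ or from $\m\cdot\tau$. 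An induction on $\lambda_0$ in the degree-lex order then shows every $g_{\lambda,i}\in\tau(R,\D_n+\Div(a)/q^n)+\m\cdot\tau(R,\D_n)$, and Nakayama finishes.

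Your ``monomial shift'' idea does not reproduce this mechanism. If you start from $s'\in J$ with $\phi_{en,\D}(s')=s$ and set $t=s'\cdot x^{\mu_{en}-\lambda_0}$, then $\phi_{en,\D}(ft)$ is the $(\mu_{en},j)$-component of $U_n\cdot s'\cdot f\cdot x^{\mu_{en}-\lambda_0}$; but $U_n s'$ has no sparseness, so the product with $f$ mixes all components, and the only condition you have recorded, $\lambda_0\in I_{en}$, is far weaker than $\deg\lambda_0<q^{n-1}$ and does nothing to control carries. There is no visible way to isolate $s$ modulo $\m J$ here, and the inductive structure over $I_e$ that the paper uses has no analogue in your setup. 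In short, the missing idea is Lemma~\ref{perturb} together with the $q^{n-1}$-th power trick in Proposition~\ref{main'}; without them the last step does not go through.
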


By Proposition \ref{test basic} (2), it is enough to show the theorem when $R$ is a complete local ring.
Before proving the theorem, we fix some notations.

\begin{setup}\label{3.12}
Let $(R, \m)$ be an $F$-finite complete Noetherian local normal domain of characteristic $p>0$, $k:=R/ \m$ be the residue field, $d=\dim(R)>0$, and $\D$ be an effective $\Q$-Weil divisor such that $K_R+ \D$ is a $\Q$-Cartier $\Q$-divisor.
We define $A := k[[x_1, x_2, \dots , x_d]]$ and fix inclusions $k \subseteq R$ and $A \subseteq R$ as in Proposition \ref{Cohen}.
We denote the finite morphism induced by the inclusion $A \subseteq R$ by $\pi : \Spec R \to \Spec A$.
We fix $K_A=0$ and $K_R= \Ram_\pi$.
\end{setup}

In order to prove Theorem \ref{main}, we first perturb $\D$ as follows.

\begin{lem}\label{perturb}
	Suppose that we are in the setting of Set up \ref{3.12}.
There exists a positive integer $q=p^e$, an element $f$ in $R$, and effective $\Q$-Weil divisors $\{ \D_n \}_{n \ge 1}$ on $R$ which satisfy the following properties.
       	\begin{enumerate}
       	\item $\D \le \dots \le \D_2 \le \D_1$.
       	\item $K_R+\D_n=\Div(f^{q^{n-1}})/(q^n-1)$ for all $n \ge 1$.
       	\item $\tau(R, \D)= \tau(R, \D_n)$ for all $n \ge 1$.
       	\end{enumerate}
        \end{lem}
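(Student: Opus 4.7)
The plan is to perturb $\Delta$ to $\Delta_n$ so that $(q^n-1)(K_R+\Delta_n)$ equals the principal divisor $q^{n-1}\Div(f)$ for a single $f\in R$ chosen independently of $n$, with the perturbation small enough that Proposition~\ref{fpt}~(2) forces $\tau(R,\Delta_n)=\tau(R,\Delta)$.

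To set this up, I would first note that $K_R+\Delta$ is effective (since $K_R=\Ram_\pi\ge 0$ and $\Delta\ge 0$) and $\Q$-Cartier. Pick any positive integer $m$ with $m(K_R+\Delta)$ integral Cartier; because $R$ is local, this divisor is principal, so write $m(K_R+\Delta)=\Div(g)$ with $g\in R$. By Proposition~\ref{fpt}~(2) applied to the effective Cartier divisor $\Div(g)$, there is an $\epsilon_0>0$ such that $\tau(R,\Delta+\epsilon\,\Div(g))=\tau(R,\Delta)$ whenever $0\le\epsilon<\epsilon_0$. Now choose $e$ with $q:=p^e$ large enough that $1/(q-1)<\epsilon_0$, set $a:=\lceil q/m\rceil$ and $f:=g^a\in R$, and define
\[
\Delta_n\;:=\;\frac{am\,q^{n-1}}{q^n-1}(K_R+\Delta)-K_R.
\]

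The three properties then follow by direct computation. Property (2) is immediate from $am(K_R+\Delta)=a\Div(g)=\Div(f)$, which gives $(q^n-1)(K_R+\Delta_n)=q^{n-1}\Div(f)=\Div(f^{q^{n-1}})$. Setting $c_n:=am\,q^{n-1}/(q^n-1)=(am/q)/(1-q^{-n})$, one checks that $c_n\ge 1$ (because $am\ge q$) and that $\{c_n\}$ is strictly decreasing in $n$; combined with the effectivity of $K_R+\Delta$, this yields (1) together with the effectivity of each $\Delta_n$. For (3), rewrite $\Delta_n-\Delta=\tfrac{c_n-1}{m}\Div(g)$; the inequality $am\le q+m-1$ coming from $a=\lceil q/m\rceil$ gives $c_n-1\le c_1-1\le m/(q-1)$, so the coefficient of $\Div(g)$ is at most $1/(q-1)<\epsilon_0$, and Proposition~\ref{fpt}~(2) yields $\tau(R,\Delta_n)=\tau(R,\Delta)$.

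The one point that needs care is calibrating $q$ against the invariance window $\epsilon_0$ produced by Proposition~\ref{fpt}~(2): the maximum perturbation $\Delta_1-\Delta$ has coefficient of order $m/(q-1)$ on $\Div(g)$, so taking $q$ sufficiently large drags every $\Delta_n-\Delta$ into the window. Once this calibration is made, the choice $f=g^a$ with $a=\lceil q/m\rceil$ is essentially forced, being the smallest exponent for which the resulting $\Delta_n$ dominates $\Delta$ uniformly in $n$.
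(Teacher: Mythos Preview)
Your proof is correct and follows the same underlying idea as the paper's: perturb $\Delta$ along the ray $K_R+\Delta$ (which is effective since $K_R=\Ram_\pi\ge 0$), using Proposition~\ref{fpt}~(2) to guarantee the test ideal is unchanged. The only difference is organizational: the paper first perturbs $\Delta$ to some $\Delta_\epsilon$ with $q'(K_R+\Delta_\epsilon)$ principal and then perturbs a second time to land on the sequence $\Delta_n$, whereas you calibrate $q$ and $a=\lceil q/m\rceil$ up front so that a single perturbation suffices. Your one-step version is slightly more explicit and makes the bound $(c_n-1)/m\le 1/(q-1)$ transparent, but the content is the same.
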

\begin{proof}
       	Since $K_R=\Ram_\pi$ is effective (see \cite[Definition 4.5]{ST1}), for every positive number $\epsilon$, the divisor $\D_\epsilon \coloneqq \D+ \epsilon (K_R+\D)$ satisfies $\D_\epsilon \ge \D$.
        By Proposition \ref{fpt}, there exists $0 \le \epsilon \ll 1 $, $q'=p^{e'}$, and $f' \in R$ such that 
  	\[\tau(R, \D_\epsilon)= \tau(R, \D) \textup{ and } K_R+\D_\epsilon= \frac{\Div(f')}{q'}.\]
        For every positive integer $n$, we define 
        \[\D'_n \coloneqq \frac{\Div((f')^{(q')^{n-1}})}{(q')^n-1} -K_R.\]
        Since $\lim_{n \to \infty}\D'_n =\D_\epsilon$, there is $m>0$ such that $\tau(R, \D'_m)= \tau(R, \D_\epsilon)$.
        We define $f \coloneqq (f')^{(q')^{m-1}}$ and $q \coloneqq (q')^m$.
        Then 
        \[\D_n \coloneqq \frac{\Div(f^{q^{n-1}})}{q^n-1} - K_R\]
        satisfies the statement of this lemma.
\end{proof}
        
\begin{prop}\label{main'}
	Suppose that we are in the setting of Set up \ref{3.12} and take $q=p^e$ as in Lemma \ref{perturb}.
Assume that $n \ge 1$ and $a \in A$ satisfies $\ord_A(a)<q^{n-1}$.
	Then 
        \[\tau(R, \D+ \frac{\Div(a)}{q^n})=\tau(R,\D).\]
\end{prop}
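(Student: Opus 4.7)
The plan is to reduce to a cleaner statement about $\D_n$ from Lemma \ref{perturb}, recast both sides as images of the trace-like map $\phi_{en,\D_n}$, and then exploit the explicit $L^{q^n}$-decomposition from Proposition \ref{decompo R} to prove surjectivity. By Lemma \ref{perturb}, $\D\le\D_n$ and $\tau(R,\D)=\tau(R,\D_n)$, so the chain
\[
\tau\bigl(R,\,\D_n+\tfrac{\Div(a)}{q^n}\bigr)\subseteq\tau\bigl(R,\,\D+\tfrac{\Div(a)}{q^n}\bigr)\subseteq\tau(R,\D)=\tau(R,\D_n)
\]
reduces the proposition to proving $\tau(R,\D_n+\Div(a)/q^n)=\tau(R,\D_n)$. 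Since $K_R+\D_n=\Div(f^{q^{n-1}})/(q^n-1)$ is principal, the map $\phi_{en,\D_n}$ is defined, and applying Proposition \ref{phi principal} with the elements $a$ and $1$ of $R$ gives
\[
\phi_{en,\D_n}\bigl(F^{en}_*(a\cdot\tau(R,\D_n))\bigr)=\tau\bigl(R,\,\D_n+\tfrac{\Div(a)}{q^n}\bigr),\qquad\phi_{en,\D_n}\bigl(F^{en}_*\tau(R,\D_n)\bigr)=\tau(R,\D_n).
\]
The inclusion $\subseteq$ between these images is automatic from $a\tau\subseteq\tau$; the task is the reverse inclusion $\phi_{en,\D_n}(F^{en}_*(a\tau(R,\D_n)))\supseteq\tau(R,\D_n)$.

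By Proposition \ref{decompo R}(2), I may compute $\phi_{en,\D_n}(F^{en}_*z)=\Tr_{F^{en}_R,K_R}(F^{en}_*(f^{q^{n-1}}z))$ as the $(t_{i_0}x^{\mu_{en}})$-coefficient of $f^{q^{n-1}}z$ with respect to the $L^{q^n}$-basis $\{t_ix^\lambda\}_{(\lambda,i)\in I_{en}\times\{1,\dots,l\}}$ of $L$, for an $i_0$ of my choice. Expanding $a\in A$ in the corresponding $A^{q^n}$-basis of $A$ (Lemma \ref{decompo A}(3)) as $a=\sum y_{\lambda,i}^{q^n}t_ix^\lambda$ with $y_{\lambda,i}\in A$, the hypothesis $\ord_A(a)<q^{n-1}$ guarantees that the lowest-degree monomial of $a$ corresponds to some $\alpha_0\in I_{en}$ with $\deg\alpha_0<q^{n-1}$, and decomposing the coefficient $a_{\alpha_0}\in k$ in the $k^{q^n}$-basis $\{t_i\}$ produces an index $i_0$ with $y_{\alpha_0,i_0}\in A^\times$; I fix this $i_0$ for the trace. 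The crucial numerical consequence is that each coordinate of $\alpha_0$ is strictly less than $q^{n-1}$, so $x^{\mu_{en}-\alpha_0}$ is a well-defined monomial in $A$, and no carry-over modulo $q^n$ can bring a $(\lambda',i')$-term of $a$ with $\lambda'\ne\alpha_0$ into the target $(t_{i_0}x^{\mu_{en}})$-slot when paired with the $(\mu_{en}-\alpha_0)$-part of $f^{q^{n-1}}s'$.

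The core step is then, for each $s\in\tau(R,\D_n)$, to construct $s'\in\tau(R,\D_n)$ with $\phi_{en,\D_n}(F^{en}_*(as'))=s$. My candidate $s'$ will be built from $s$, the unit $y_{\alpha_0,i_0}^{-1}$, and the monomial $x^{\mu_{en}-\alpha_0}$, exploiting the Frobenius-twisted $R$-linearity $\phi_{en,\D_n}(F^{en}_*(r^{q^n}z))=r\,\phi_{en,\D_n}(F^{en}_*z)$ to transfer the $s$-content into the projection formula. The main obstacle will be the explicit tracking of the $L^{q^n}$-expansion of the product $f^{q^{n-1}}\cdot a\cdot s'$: because $f^{q^{n-1}}$ lives in $R$ rather than in $A$, its coefficients in the basis need not lie in $A$, so verifying that only the distinguished $(\alpha_0,i_0)$-term of $a$ survives to the $(t_{i_0}x^{\mu_{en}})$-projection---using the degree bound $\deg\alpha_0<q^{n-1}$---is the technical heart of the argument.
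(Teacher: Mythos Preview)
Your reduction to $\D_n$ and the identification of both test ideals as images of $\phi_{en,\D_n}$ are correct, and your location of the leading exponent $\alpha_0$ with $\deg\alpha_0<q^{n-1}$ is right. But the core step has a genuine gap. You assert that, for your candidate $s'$, only the $(\alpha_0,i_0)$-term of $a$ contributes to the $(t_{i_0}x^{\mu_{en}})$-projection of $f^{q^{n-1}}as'$. Your no-carry observation is valid \emph{only} for pairings against the single $(\mu_{en}-\alpha_0)$-component of $f^{q^{n-1}}s'$; but that element has many other components in the $L^{q^n}$-basis (neither $f^{q^{n-1}}\in R$ nor $s'$ is a $q^n$-th power), and each of those components pairs with terms of $a$ to produce further, uncontrolled contributions to the target slot. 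Nothing in your construction cancels these cross-terms, so the exact equality $\phi_{en,\D_n}(F^{en}_*(as'))=s$ cannot be obtained from an $s'$ of the form you describe.

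The paper's proof avoids exact surjectivity altogether: it shows $\tau(R,\D_n)\subseteq\tau(R,\D_n+\Div(a)/q^n)+\m\,\tau(R,\D_n)$ and concludes by Nakayama's lemma. The mechanism is a two-stage lift. Given $\alpha\in\tau(R,\D_n)$, one first writes $\alpha=\Tr_{F^e_R,K_R}(F^e_*g)$ with $g\in f\cdot\tau(R,\D_n)$ using only the $e$-th Frobenius, decomposes $g=\sum_{\lambda\in I_e}g_\lambda x^\lambda$ with $g_\lambda\in L^q\cdot k$, and then passes to level $en$ via $h=g^{q^{n-1}}a$. Because $g^{q^{n-1}}$ has exponent support in $q^{n-1}\cdot I_e$, the $L^{q^n}$-expansion of $h$ is tractable, and the cross-terms split cleanly: those with $\mu<\mu_0$ in the degree-lex order, or with nonzero carry, land in $(\m_R\,\tau(R,\D_n))^{[q^n]}\cdot k$; those coming from $\lambda<\lambda_0$ land in the ideal $J$ generated by the earlier $g_{\lambda,i}$, which is handled by induction on $\lambda_0$. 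Neither the Nakayama reduction nor this inductive bookkeeping appears in your outline, and both are essential to control precisely the cross-terms you leave unaddressed.
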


\begin{proof}
	We have the unique decomposition 
        \[a=\sum_{\mu \in I_{en}} a_\mu \cdot x^\mu \textup{ with } a_\mu \in A^{q^n}\cdot k.\]
        We define 
        \[\mu_0 \coloneqq \min \left\{ \mu \in I_{en} \mid a_\mu \in A^\times \right\}.\]
        Since $\ord_A(a)<q^{n-1}$, $\mu_0$ exists and $\deg \mu_0 <q^{n-1}$.
        Let $u \coloneqq \overline{a_{\mu_0}} \in k=A/\m_A$.
        By replacing $a$ by $a\cdot u^{-1}$, we can assume that $a_{\mu_0} \equiv 1 \pmod{ \m_A}$.
        
        Fix $\alpha \in \tau(R, \D_n)$.
        Since $\tau(R, \D_n)=\tau(R, \D_1)=\phi_{e,\D}(F^e_* \tau( X, \D_1))=\Tr_{F^e_R, K_R}(F^e_*( f \cdot \tau(R, \D_n) ))$, 
        there exists $g \in f\cdot \tau(R, \D_n)$ such that $\alpha=\Tr_{F^e_R, K_R}(F^e_* g)$.
	Let us decompose 
        \[g= \sum_{\lambda \in I_e} g_\lambda \cdot x^\lambda \textup{ with } g_\lambda \in L^q \cdot k,\]
	and 
        \[g_\lambda= \sum_{i=1}^{l} g_{\lambda, i}^q \cdot t_i \textup{ with } g_{\lambda, i} \in L,\]
	where $\vt=(t_1, \dots, t_l)$ is a basis of $k$ over $k^q$.
        For every $\lambda \in I_e$ and $1 \le i \le l$, we have 
        \[g_{\lambda,i} = p_{\vt, (\mu_e, i)}(g x^{\mu_e-\lambda})
         \in  \Tr_{F^e_R, K_R}(F^e_*(f \tau(R, \D_n)))
         =  \tau(R, \D_n),
        \]
        and hence $g_\lambda \in \tau(R, \D_n)^{[q]} \cdot k$ for every $\lambda \in I_e$.
        Furthermore, we have $\alpha = g_{\mu_e, 1}$.
\begin{cl}\label{computation}
       	For all elements $\lambda$ in $I_e$ and $ 1 \le i \le l$, we have 
        \[g_{\lambda, i} \in \tau(R, \D_n+\frac{\Div(a)}{q^n})+ \m \cdot \tau(R, \D_n).\]
\end{cl}
If this claim is true, then we have the inclusion
        \[\tau(R, \D_n) \subseteq \tau(R, \D_n+ \frac{\Div(a)}{q^n})+ \m \cdot \tau(R, \D_n).\]
        Hence, we have $\tau(R, \D_n)= \tau(R, \D_n+ \Div(a)/q^n)$ by Nakayama.
Therefore, in order to finish the proof, it is enough to check the assertion in the claim.
  
Let $h \coloneqq g^{q^{n-1}} \cdot a \in R$ and decompose $h= \sum_{\xi} h_\xi x^\xi$ with $h_\xi \in L^{q^n} \cdot k$.
	Then we have 
	\[
	h=\sum_{\lambda \in I_e, \mu \in I_{en}}g_\lambda^{q^{n-1}} a_\mu x^{q^{n-1}\lambda+\mu}
	=\sum_{\lambda \in I_e, \mu \in I_{en}}(g_\lambda^{q^{n-1}} a_\mu x^{q^{n-1} \cdot \alpha (q^n \lambda+\mu)}) \cdot x^{\beta(q^{n-1}\lambda+\mu)},
	\]
	where for every $\eta \in \N^d$, we define $\alpha(\eta) \in \N^d$ and $\beta(\eta) \in I_{en}$ as the unique pair such that $\eta= q^n \alpha(\eta) + \beta(\eta)$.
	Consequently, we have 
        \[h_\xi= \sum_{\lambda, \mu} g_\lambda^{q^{n-1}} a_\mu x^{q^n \alpha(q^{n-1}\lambda+\mu)}, \]
	where $\lambda \in I_e$ and $\mu \in I_{en}$ run through all elements such that $\beta(q^{n-1}\lambda + \mu)= \xi$.
	
	Fix $\lambda_0 \in I_e$ and define $\xi_0 \coloneqq q^{n-1}\lambda_0 + \mu_0 \in \N^d$.
	Since $\deg \mu_0 <q^{n-1}$, we have $\xi_0 \in I_{en}$.
	We define an ideal $J \subseteq R$ as the ideal generated by all $g_{\lambda,i}$ with $\lambda < \lambda_0$ and $1 \le i \le l$.
	For every $\lambda \in I_e$ and $\mu \in I_{en}$, we have
	\[
	g_\lambda^{q^{n-1}} a_\mu x^{q^n \cdot \alpha}  \in \left\{ \begin{array}{ll}
	J^{[q^n]} \cdot k & \textup{(when $\lambda< \lambda_0$)}\\
	(\m_R \cdot \tau(R, \D_n))^{[q^n]} \cdot k & \textup{(when $\mu<\mu_0$ or $\alpha \neq 0$)} \\
	g_\lambda^{q^{n-1}} + (\m_R \cdot \tau(R, \D_n))^{[q^n]} \cdot k & \textup{(when $\mu=\mu_0$ and $\alpha=0$ )}
	\end{array} \right.,
	\]
        where $\alpha=\alpha(q^{n-1}\lambda+\mu) \in \N^d$.
	Hence we have $h_{\xi_0}- g_{\lambda_0}^{q^{n-1}} \in (J+\m_R \cdot \tau(R, \D_n))^{[q^n]} \cdot k$.
	
	Let $\mathbf{s}=(s_1, \dots ,s_{l'})$ be a basis of $k$ over $k^{q^n}$ such that $s_i=t_i^{q^{n-1}}$ for all $1 \le i \le l$.
	We can decompose $h_\xi= \sum_{1 \le i \le l'} h_{\xi, i}^{q^n} s_i$ with $h_{\xi,i} \in L$.
	By Lemma \ref{compose ideal 2}, we have 
	\[h_{\xi_0, i}- g_{\lambda_0,i} \in J+\m_R \cdot \tau(R, \D_n) \textup{ for every } 1 \le i \le l.\]
	On the other hand, we have 
	\begin{eqnarray*}
	h_{\xi_0, i}= p_{\mathbf{s}, (\mu_{en}, i)}(h \cdot x^{\mu_{en}-\xi_0}) & \in & \Tr_{F^{en}_R, K_R}(af^{q^{n-1}} \cdot \tau(R, \D_n)) \\
	& =& \phi_{en, \D_n}(a \cdot \tau(R, \D_n))\\
	& =& \tau(R, \D_n+ \frac{\Div(a)} {q^n}).
	\end{eqnarray*}
	Hence we have 
	\[g_{\lambda_0, i} \in \tau(R, \D_n+\frac{\Div(a)}{q^n}) + J+ \m_R \cdot \tau(R, \D).\]
	By induction on $\lambda_0$, we complete the proof of the claim, and hence of the proposition.
\end{proof}

We now give the proof of Theorem \ref{main}.
\begin{proof}[Proof of Theorem \ref{main}]
	Let $(R,\m)$, $\D$, $k$, $A$, be as in Set up \ref{3.12} and $q=p^e$ be as in Lemma \ref{perturb}.
        Let $M>0$ be as in Proposition \ref{norm} for the extension $A \subseteq R$.
        Then, for all $n \ge 1$ and $r \in R$ with $\ord_R(r) < q^{n-1}/M$,
        we have $\ord_A(\rN(r)) <q^{n-1}$.
        It follows from Proposition \ref{main'} that 
        \[\tau(R, \D+\frac{\Div(\rN(r))}{q^n})=\tau(R, \D).\]
        Since $\rN(r) \in r\cdot R$, we have  the inclusion
        \[\tau(R, \D+ \frac{\Div(\rN(r))}{q^n}) \subseteq \tau(R, \D+ \frac{\Div(r)}{q^n}).\]
        This completes the proof.
\end{proof}

\begin{cor}\label{main''}
	Let $X$ be an $F$-finite Noetherian normal scheme, $\D$ be an $\R$-Weil divisor on $X$, and $P$ be a point of $X$.
        Then there exists $\delta>0$ which satisfies the following properties.
        \begin{enumerate}
        \item Let $D$ be an effective Cartier divisor on $X$ and $t$ be an element in $\R_{ > 0}$.
	If $t \cdot \ord_P(D) < \delta$, then $\tau(X, \D)_P=\tau(X, \D+tD)_P$.
        \item Let $\fa \subseteq \sO_X$ be a coherent ideal and $t$ be an element in $\R_{ > 0}$.
        If $t \cdot \ord_P(\fa) < \delta$, then $\tau(X, \D)_P=\tau(X, \D, \fa^t)_P$.
        \item Let $\fa_\bullet$ be a graded sequence of ideals and $t$ be an element in $\R_{ > 0}$.
        If $t \cdot \ord_P(\fa_\bullet) < \delta$, then $\tau(X, \D)_P=\tau(X, \D, \fa_\bullet^t)_P$.
        \end{enumerate}
\end{cor}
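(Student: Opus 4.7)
The plan is to reduce everything to Theorem \ref{main}. First, using Proposition \ref{test basic} (1) and (2), I would reduce to the case $X = \Spec R$ with $(R,\m)$ an $F$-finite complete normal local domain and $P = \m$; the invariants $\ord_P(D)$, $\ord_P(\fa)$, $\ord_P(\fa_\bullet)$ are preserved under these operations, and $\D$ may be assumed effective after a local shift.

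For part (1) in the $\Q$-Gorenstein case (where $K_R+\D$ is $\Q$-Cartier), let $M$ and $q=p^e$ be given by Theorem \ref{main}, and set $\delta = 1/(Mq)$. I would write the effective Cartier divisor $D$ as $\Div(f)$ for some $f \in R$. For $t>0$ with $t\ord_P(D) < \delta$, choose $n$ large enough that $t' \coloneqq \lceil tq^n\rceil/q^n$ still satisfies $t'\ord_P(D) < \delta$; this is possible because $t' \to t^+$ as $n \to \infty$. Setting $r = f^{\lceil tq^n\rceil}$ gives $\ord_R(r) = t'q^n\ord_P(D) < q^{n-1}/M$, so Theorem \ref{main} yields $\tau(R,\D) = \tau(R,\D+t'D)$; since $t \le t'$, monotonicity sandwiches $\tau(R,\D+tD)$ between the outer two and forces equality. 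The general case of part (1) then follows by using Proposition \ref{non gor test} and Noetherianity to write $\tau(R,\D) = \sum_{i=1}^s \tau(R,\D_i)$ for finitely many $\Q$-Gorenstein $\D_i \ge \D$, taking $\delta = \min_i \delta_i$, and chaining
\[
\textstyle\sum_i \tau(R,\D_i+tD) \;\subseteq\; \tau(R,\D+tD) \;\subseteq\; \tau(R,\D) \;=\; \sum_i \tau(R,\D_i) \;=\; \sum_i \tau(R,\D_i+tD)
\]
to force equality throughout.

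Part (2) follows from part (1) by choosing $f \in \fa$ with $\ord_R(f) = \ord_P(\fa)$ (which exists because $\fa \not\subseteq \m^{\ord_P(\fa)+1}$), using Proposition \ref{test basic} (4) to identify $\tau(X,\D,(f)^t) = \tau(X,\D+t\Div(f))$, and sandwiching $\tau(X,\D,(f)^t) \subseteq \tau(X,\D,\fa^t) \subseteq \tau(X,\D)$ with the outer ends made equal by part (1). Part (3) reduces to part (2) via Proposition-Definition \ref{asympt}, together with Fekete's lemma applied to the subadditive sequence $n \mapsto \ord_P(\fa_n)$, which lets one choose $n$ arbitrarily divisible with $\ord_P(\fa_n)/n$ arbitrarily close to $\ord_P(\fa_\bullet)$; in particular, with $(t/n)\ord_P(\fa_n) < \delta$.

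The main technical effort is already packaged into Theorem \ref{main}; what remains for the corollary is reduction and bookkeeping. The subtle step is the dyadic approximation $t' = \lceil tq^n\rceil/q^n$, which is what bridges between Theorem \ref{main}'s rational perturbations of the form $\Div(r)/q^n$ and the arbitrary real coefficients $t$ appearing in the corollary.
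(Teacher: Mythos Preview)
Your proof is correct and follows essentially the same route as the paper: localize, reduce to the $\Q$-Gorenstein case via Proposition~\ref{non gor test}, invoke Theorem~\ref{main} for (1), and then deduce (2) and (3) by picking a general element of $\fa$ and passing to a sufficiently divisible index. The only cosmetic differences are that the paper skips the completion step (Theorem~\ref{main} already applies directly to $\sO_{X,P}$) and simply declares $\delta = 1/(Mq^2)$ without writing out the approximation $t' = \lceil tq^n\rceil/q^n$, whereas you make this bridge explicit and thereby obtain the slightly sharper constant $\delta = 1/(Mq)$.
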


\begin{proof}
	We may assume that $X=\Spec R$ for $R=\sO_{X,P}$ and $\D$ is effective.
        By Proposition \ref{non gor test}, we may assume that $K_X+\D$ is a $\Q$-Cartier $\Q$-divisor.
        
	Let $q$ and $M$ be as in Theorem \ref{main}.
        Then $\delta=1/(Mq^2)$ satisfies the statement (1).
        In (2), there exists $f \in \fa$ such that $t \cdot \ord_R(f) < \delta$ and hence $\tau(X, \D)= \tau(X, \D+t \Div(f))$.
        By Proposition \ref{test basic} (4), we have $\tau(X, \D+ t \Div(f)) \subseteq \tau(X, \D, \fa^t)$. This shows (2).
        The statement of (3) follows from (2).
\end{proof}

For a Noetherian local ring $(R, \m)$, we denote the \emph{Hilbert-Samuel multiplicity} of $R$ by $\e(R)$ (see \cite[p.108]{Mat}).
For a Noetherian scheme $X$ and a (not necessarily closed) point $x \in X$, we define the \emph{multiplicity} of $X$ at $x$ by $\mult_x(X) := \e(\sO_{X,x})$.
Moreover, for a closed subscheme $Y \subseteq X$, we define
\[
\mult_x(Y) := \left\{ \begin{array}{ll}
\e(\sO_{Y, x}) & \textup{( when } x \in Y) \\
0 & \textup{( when } x \not\in Y )
\end{array}\right. .
\]

\begin{defn}\label{multiplicity}
Let $X$ be a Noetherian integral scheme and $x$ be a point of $X$.
Let $ D= \sum_{i=1}^n a_i D_i$ be an $\R$-Weil divisor on $X$, where $D_i \subseteq X$ is a prime divisor for every $i$.
Then we define $\mult_x(D) := \sum_{i=1}^n a_i \mult_x(D_i)$.
\end{defn}

By Corollary \ref{main''} (1), we obtain the main theorem.

\begin{cor}[Main Theorem]\label{main'''}
	Let $X$ be an $F$-finite Noetherian normal scheme, $\D$ be an $\R$-Weil divisor on $X$, and $P$ be a point of $X$.
	Then there exists $\delta>0$ such that for every effective $\R$-Cartier $\R$-divisor $E$ on $X$ with $\mult_P(E)< \delta$, we have 
        \[\tau(X, \D+ E)_P=\tau(X, \D)_P.\]
\end{cor}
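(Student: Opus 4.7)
The plan is to reduce Corollary~\ref{main'''} to Corollary~\ref{main''}(1) in two stages: first comparing the multiplicity to the order in the local ring at $P$, and second passing from integral Cartier perturbations to arbitrary $\R$-Cartier ones. By Proposition~\ref{test basic}(1) I may work locally and assume $X=\Spec R$ with $R=\sO_{X,P}$ an $F$-finite normal Noetherian local domain.

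The bridge between Corollary~\ref{main''}(1) (which is stated in terms of $\ord_P$) and the desired statement (which is in terms of $\mult_P$) is the standard inequality
\[
\ord_P(D) \le c\cdot \mult_P(D)
\]
for a constant $c>0$ depending only on $R$ and for every effective Cartier divisor $D$ on $X$. Indeed, if $D=\Div(f)$ locally with $\ord_R(f)=n$, then $f\in\m^n$, and multiplicity theory forces $\mult_P(D)=\e(R/(f))\ge n/c$ for a suitable $c$. With this in hand, the $\Q$-Cartier case is immediate: for $E$ effective $\Q$-Cartier with $\mult_P(E)<\delta_0/c$ (where $\delta_0$ is the constant supplied by Corollary~\ref{main''}(1)), choose $N\ge 1$ with $NE$ integral Cartier and apply Corollary~\ref{main''}(1) with $D:=NE$ and $t:=1/N$; the inequality above yields $t\cdot\ord_P(D)\le c\cdot\mult_P(E)<\delta_0$, whence $\tau(X,\D+E)_P=\tau(X,\D)_P$.

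For the general $\R$-Cartier case I would approximate $E$ from above by an effective $\Q$-Cartier $\Q$-divisor with nearly the same multiplicity at $P$. Writing $E=\sum_j s_j D_j$ with $s_j\in\R$ and $D_j$ Cartier, I would round each $s_j$ to a nearby rational $s_j'$ and then add a small positive rational multiple of an auxiliary effective Cartier divisor $F$ to guarantee that the resulting $E'$ is effective, $\Q$-Cartier, satisfies $E'\ge E$, and has $\mult_P(E')<\delta_0/c$. The monotonicity sandwich
\[
\tau(X,\D+E')_P \subseteq \tau(X,\D+E)_P \subseteq \tau(X,\D)_P
\]
(with the outer inclusion following from Proposition~\ref{non gor test} applied to $\D+E\ge \D$), combined with the $\Q$-Cartier case applied to $E'$, then forces equality throughout.

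The main obstacle I anticipate is the approximation step: one must choose the perturbation of the coefficients $s_j$ and the auxiliary divisor $F$ in a way that simultaneously preserves effectiveness, bounds $E$ from above, and keeps $\mult_P(E')$ below the threshold $\delta_0/c$. This is a finite piece of linear-algebraic bookkeeping in the $\R$-vector space of $\R$-Cartier divisors near $P$ rather than a new conceptual difficulty, and once it is carried out the rest of the argument is formal.
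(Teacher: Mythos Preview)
Your proposal is correct and follows essentially the same route as the paper's proof: pass from $\R$-Cartier to $\Q$-Cartier by adding a small effective $\R$-Cartier divisor, clear denominators to an integral Cartier divisor $D$, and invoke Corollary~\ref{main''}(1) via the comparison $\ord_P(D)\le\mult_P(D)$ (the paper cites \cite[Theorems~14.7 and~14.9]{Mat} for this inequality, obtaining your constant $c$ equal to $1$). One nitpick: the monotonicity inclusion $\tau(X,\D+E)_P\subseteq\tau(X,\D)_P$ for effective $E$ is immediate from the definition of uniformly $F$-compatible ideals rather than from Proposition~\ref{non gor test}, which does not directly apply since $\D+E$ need not satisfy its $\Q$-Gorenstein index hypothesis.
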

\begin{proof}
After adding to $E$ a suitable effective $\R$-Cartier $\R$-divisor, we may assume that $E$ is a $\Q$-Cartier $\Q$-divisor.
	Take a positive integer $r$ such that $D \coloneqq rE$ is an integral Cartier divisor.
        By \cite[Theorem 14.7 and Theorem 14.9]{Mat}, we have $\ord_P(D) \le \mult_P(D)$.
        Hence, we have $\mult_P(E)=\mult_P(D)/r \ge \ord_P(D)/r$ and we can apply Corollary \ref{main''} (1).
\end{proof}
\subsection{Applications to graded sequences of ideals}

\begin{prop}\label{asymp jump}
	Let $(X , \D)$ be an $F$-finite log $\Q$-Gorenstein pair and $P$ be a point of $X$.
        For a graded sequence $\fa_\bullet$, the following are equivalent:
        \begin{enumerate}
        \item $\bigcap_{m \ge 1} \J(X, \D, \fa_\bullet^m)_P=0$.
        \item $\bigcap_{m \ge 1} \tau(X, \D, \fa_\bullet^m)_P=0$.
        \item $\exists m \ge 1$, $\tau(X, \D, \fa_\bullet^m)_P \neq \tau(R, \D)_P$.
        \item $\ord_P(\fa_\bullet)>0$.
        \item For all $v$ such that $\cen_X(v)=\overline{\{P\}}$, we have $v(\fa_\bullet)>0$.
        \item There exists $v$ such that $\cen_X(v)=\overline{\{P\}}$ and $v(\fa_\bullet)>0$.
        \end{enumerate}
\end{prop}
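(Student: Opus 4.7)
The plan is to prove the six conditions equivalent via the cycle $(1) \Rightarrow (2) \Rightarrow (3) \Rightarrow (4) \Rightarrow (1)$ combined with the block $(4) \Leftrightarrow (5) \Leftrightarrow (6)$. After localization I work in $R = \sO_{X,P}$ with maximal ideal $\m = \m_P$, so that divisorial valuations with center $\overline{\{P\}}$ correspond precisely to $\m$-valuations of $R$. The block $(4) \Leftrightarrow (5) \Leftrightarrow (6)$ is immediate from Izumi (Proposition \ref{Izumi} (2)): for each $\m$-valuation $v$ the pointwise inequality $\ord_R(f) \le v(f) \le C \cdot \ord_R(f)$ survives taking infima over $\fa_n$ and dividing by $n$, giving $\ord_P(\fa_\bullet) \le v(\fa_\bullet) \le C \cdot \ord_P(\fa_\bullet)$; at least one $\m$-valuation exists (e.g. via the normalized blow-up of the closed point of $R$).

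The implications $(3) \Rightarrow (4)$, $(2) \Rightarrow (3)$, and $(1) \Rightarrow (2)$ are either formal or direct appeals to results in the paper. Specifically, $(3) \Rightarrow (4)$ is the contrapositive of Corollary \ref{main''} (3): if $\ord_P(\fa_\bullet) = 0$ then $m \cdot \ord_P(\fa_\bullet) < \delta$ for every $m$, whence $\tau(X, \D, \fa_\bullet^m)_P = \tau(X, \D)_P$. The implication $(2) \Rightarrow (3)$ holds because $\tau(X, \D)_P$ is nonzero, so if every term in the intersection equaled $\tau(X, \D)_P$, the intersection could not vanish. For $(1) \Rightarrow (2)$, I invoke the standard comparison $\tau(X, \D, \fa^t) \subseteq \J(X, \D, \fa^t)$ due to Takagi; applied to $\fa_n$ for divisible $n$, this yields $\tau(X, \D, \fa_\bullet^m) = \tau(X, \D, \fa_n^{m/n}) \subseteq \J(X, \D, \fa_n^{m/n}) \subseteq \J(X, \D, \fa_\bullet^m)$, so the intersections are nested in the desired direction.

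The main content is $(4) \Rightarrow (1)$. Setting $\alpha := \ord_P(\fa_\bullet) > 0$, we have $\fa_n R \subseteq \m^{\ord_P(\fa_n)}$ with $\ord_P(\fa_n)/n \ge \alpha$, so for divisible $n$ (using that the asymptotic multiplier ideal is realized as a maximum over $n$ in the local setting),
\[
\J(X, \D, \fa_\bullet^m)_P = \J(R, \D, (\fa_n R)^{m/n}) \subseteq \J(R, \D, \m^{m\alpha}),
\]
and it suffices to prove $\bigcap_m \J(R, \D, \m^{m\alpha}) = 0$. Fix an $\m$-valuation $v = \ord_E$ and a proper birational $g \colon Y \to \Spec R$ that extracts $E$ as a prime divisor and satisfies $\m \cdot \sO_Y = \sO_Y(-G)$ for an effective Cartier divisor $G$ (obtainable by composing an extraction of $v$ with the normalized blow-up of $\m$). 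Definition \ref{multiplier} then gives $\J(R, \D, \m^{m\alpha}) \subseteq g_* \sO_Y(-\lfloor \D_Y + m\alpha G \rfloor)$, so any section $h$ of the left-hand side satisfies
\[
v(h) \ge \ord_E(\lfloor \D_Y + m\alpha G \rfloor) \ge m\alpha \cdot v(\m) + \ord_E(\D_Y) - 1.
\]
Here $v(\m) > 0$ and the constant $\ord_E(\D_Y) = 1 - \ord_E(K_{Y/\Spec R} - g^*\D)$ depends only on $v$ and $(X, \D)$ (classical log discrepancy invariance). Hence the bound tends to $+\infty$ with $m$, and any $h$ lying in every $\J(R, \D, \m^{m\alpha})$ has $v(h) = \infty$, forcing $h = 0$.

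The main obstacle is setting up the valuation estimate in step $(4) \Rightarrow (1)$: one has to produce a single birational model $Y$ that simultaneously extracts $v$ as a prime divisor and trivializes $\m$, and to check that the coefficient $\ord_E(\D_Y)$ is genuinely an invariant of $v$ and $(X, \D)$ rather than an artifact of the model. Once this is done, the remaining pieces are either formal, direct consequences of Corollary \ref{main''}, or applications of the standard inclusion $\tau \subseteq \J$.
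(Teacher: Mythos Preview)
Your proof is correct and follows essentially the same route as the paper. The paper runs the single cycle $(1)\Rightarrow(2)\Rightarrow(3)\Rightarrow(4)\Rightarrow(5)\Rightarrow(6)\Rightarrow(1)$, whereas you do $(1)\Rightarrow(2)\Rightarrow(3)\Rightarrow(4)\Rightarrow(1)$ and handle $(4)\Leftrightarrow(5)\Leftrightarrow(6)$ separately via Izumi; the paper uses only the elementary inequality $\ord_P\le v$ for $(4)\Rightarrow(5)$ and gets $(5)\Rightarrow(6)$ for free. For closing the cycle back to $(1)$, the paper argues $(6)\Rightarrow(1)$ directly: given $v=\ord_E$ with $v(\fa_\bullet)>0$ and a nonzero $f$, it picks $m$ with $v(f)<\ord_E(\D_Y)-1+m\cdot v(\fa_\bullet)$, and since $v(\fa_n)/n\ge v(\fa_\bullet)$ the same inequality excludes $f$ from every $\J(X,\D,\fa_n^{m/n})_P$. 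Your $(4)\Rightarrow(1)$ is the same valuation estimate, just filtered through the intermediate inclusion $\J(R,\D,(\fa_nR)^{m/n})\subseteq\J(R,\D,\m^{m\alpha})$; this extra step is harmless but not needed. One small slip: your parenthetical formula $\ord_E(\D_Y)=1-\ord_E(K_{Y/\Spec R}-g^*\D)$ is off by $1$ (with $\D_Y=-K_Y+g^*(K_X+\D)$ one gets $\ord_E(\D_Y)=-\ord_E(K_{Y/\Spec R}-g^*\D)$), but the only fact you actually use---that this coefficient is an invariant of $v$ and $(X,\D)$---is correct.
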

\begin{proof}
	We have the implication (1) $\Rightarrow$ (2) since we have the inclusion $\tau(X, \D, \fa^m_\bullet) \subseteq \J(X, \D, \fa^m_\bullet)$ for all $m$ (see \cite[Theorem 2.13]{Tak}, cf. \cite[Proposition 4.1]{BST}).
	The implication (2) $\Rightarrow$ (3) is trivial.
	The implication (3) $\Rightarrow$ (4) follows from Corollary \ref{main''} (3).
	Since $\ord_P(r) \le v(r)$ for all $r \in \sO_{X,P}$, we have the implication (4) $\Rightarrow$ (5).
	The implication (5) $\Rightarrow$ (6) is trivial.
        
	For the implication (6) $\Rightarrow$ (1), take a proper birational morphism $\phi:Y \to X$ and a prime divisor $E \subseteq Y$ such that $v(\fa_\bullet)>0$, where $v \coloneqq \ord_E$.
	For every $0 \neq f \in L=\Frac(R)$, there exists $m >0$ such that $v(f) < \ord_E(\D_Y)-1 + m \cdot v(\fa_\bullet)$.
        Then, for every $n >0$, we have $v(f) < \ord_E(\D_Y)-1 + m \cdot v(\fa_n)/n$ and hence $f \not \in \J(X, \D, \fa_n^{m/n})_P$.
        This means $f \not\in \J(X, \D, \fa_\bullet^m)_P$.
\end{proof}

Let $X$ be an $F$-finite Noetherian normal scheme and $\fa_\bullet$ be a non-zero graded sequence of ideals.
We define 
\[
\NNA(\fa_\bullet) \coloneqq \bigcup_{v} \cen_X(v) \subseteq X,
\]
where $v$ runs through all divisorial valuations such that $v(\fa_\bullet)>0$.

For a $\Q$-Weil divisor $\D$ on $X$, we define
\begin{eqnarray*}
\Sigma'_\D(\fa_\bullet) & \coloneqq & \left\{ x \in X \mid \bigcap_m \tau(X, \D, \fa_\bullet^m)_x =0 \right\} \subseteq X \hspace{0.5cm} \textup{and} \\
\Sigma_\D(\fa_\bullet) & \coloneqq & \left\{ x \in X \mid \exists m>0, \tau(X, \D, \fa_\bullet^m)_x \subsetneq \tau(X, \D)_x \right\} \subseteq X.
\end{eqnarray*}
When $\D=0$, we write simply $\Sigma'(\fa_\bullet)$ and $\Sigma(\fa_\bullet)$.

\begin{cor}\label{jump locus}
	With the above notation, we have
	\[
	\Sigma'_\D(\fa_\bullet) = \Sigma_\D(\fa_\bullet) =\NNA(\fa_\bullet).
	\]
        In particular, we have $\NNA(\fa_\bullet)=\Sigma(\fa_\bullet)$.
\end{cor}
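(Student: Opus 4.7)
The plan is to apply Proposition \ref{asymp jump} pointwise and transfer the equivalences across specialization, using two simple ingredients. First, $X$ is a Noetherian normal connected scheme and therefore integral, so whenever $P$ is a generization of a point $x$, the canonical map $\sO_{X,x} \hookrightarrow \sO_{X,P}$ is injective. Second, both $\Sigma_\D(\fa_\bullet)$ and $\NNA(\fa_\bullet)$ are closed under specialization. I will establish the cycle $\Sigma'_\D \subseteq \Sigma_\D \subseteq \NNA \subseteq \Sigma'_\D$; the ``in particular'' statement is then just the case $\D = 0$.

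The inclusion $\Sigma'_\D \subseteq \Sigma_\D$ is immediate: since $\tau(X,\D,\fa_\bullet^m) \subseteq \tau(X,\D)$ for every $m > 0$ while $\tau(X,\D)_x$ is a non-zero coherent fractional ideal, the vanishing of $\bigcap_m \tau(X,\D,\fa_\bullet^m)_x$ forces a strict inclusion at some $m$. For $\Sigma_\D \subseteq \NNA$, pick $x \in \Sigma_\D$ and $m$ with $\tau(X,\D,\fa_\bullet^m)_x \subsetneq \tau(X,\D)_x$. I choose a minimal prime $\mathfrak{p}$ in the support of the non-zero quotient $\tau(X,\D)_x/\tau(X,\D,\fa_\bullet^m)_x$, and let $P$ be the corresponding point, which generizes $x$. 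By minimality of $\mathfrak{p}$ in the support, the strict inclusion survives localization at $\mathfrak{p}$, giving $P \in \Sigma_\D$; then Proposition \ref{asymp jump} at $P$ (implication (3) $\Rightarrow$ (6)) furnishes a divisorial valuation $v$ with $\cen_X(v) = \overline{\{P\}}$ and $v(\fa_\bullet) > 0$. Since $x \in \overline{\{P\}} = \cen_X(v)$, we conclude $x \in \NNA(\fa_\bullet)$.

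To close the cycle I prove $\NNA \subseteq \Sigma'_\D$. Given $x \in \NNA$, pick $v$ with $v(\fa_\bullet) > 0$ and $x \in \cen_X(v) = \overline{\{P\}}$, where $P$ is the generic point of $\cen_X(v)$. Proposition \ref{asymp jump} at $P$ (implication (6) $\Rightarrow$ (2)) gives $\bigcap_m \tau(X,\D,\fa_\bullet^m)_P = 0$ in $\sO_{X,P}$. To transfer this to $x$, fix an arbitrary non-zero $f \in \sO_{X,x}$; by the first ingredient above, $f$ has non-zero image in $\sO_{X,P}$, so the vanishing at $P$ produces some $m$ with $f/1 \notin \tau(X,\D,\fa_\bullet^m)_P$. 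Since $\tau(X,\D,\fa_\bullet^m)_P$ is the localization of $\tau(X,\D,\fa_\bullet^m)_x$, this forces $f \notin \tau(X,\D,\fa_\bullet^m)_x$, and therefore $f \notin \bigcap_m \tau(X,\D,\fa_\bullet^m)_x$. Since $f \ne 0$ was arbitrary, $\bigcap_m \tau(X,\D,\fa_\bullet^m)_x = 0$, i.e., $x \in \Sigma'_\D$.

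The main subtlety lies in this last step: an infinite intersection of ideals does not in general commute with localization, so one cannot directly claim ``(2) at $P$ implies (2) at $x$''. The argument must proceed one element $f$ at a time, using integrality of $X$ to guarantee that $f$ remains non-zero after passage to $\sO_{X,P}$, and only then invoking the vanishing at $P$ to extract a single $m$ excluding $f$. This is precisely where the normal, connected hypothesis on $X$ is essential.
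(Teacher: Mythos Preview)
Your argument is essentially the paper's own in the log $\Q$-Gorenstein case, but you have overlooked a hypothesis: Proposition \ref{asymp jump} is stated only for pairs $(X,\D)$ with $K_X+\D$ a $\Q$-Cartier $\Q$-divisor, whereas in the setup of Corollary \ref{jump locus} the $\Q$-Weil divisor $\D$ is arbitrary. Every time you invoke the implications (3)$\Rightarrow$(6) or (6)$\Rightarrow$(2) at a point, you are implicitly assuming this $\Q$-Cartier condition, so as written your proof establishes the corollary only under that extra hypothesis. The paper treats the $\Q$-Gorenstein case first (along the same lines you do) and then handles general $\D$ by a separate reduction: for $\Sigma_\D \subseteq \NNA$ one uses Proposition \ref{non gor test} to write $\tau(X,\D,\fa_\bullet^m)_x = \sum_{\D'} \tau(X,\D',\fa_\bullet^m)_x$ over those $\D' \ge \D$ with $(p^e-1)(K_X+\D')$ Cartier, thereby reducing to the $\Q$-Gorenstein case; for $\NNA \subseteq \Sigma'_\D$ one compares with a single $\D' \le \D$ having $K_X+\D'$ $\Q$-Cartier, noting $\tau(X,\D,\fa_\bullet^m) \subseteq \tau(X,\D',\fa_\bullet^m)$.

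Within the $\Q$-Gorenstein case your argument is fine. Two minor remarks: for $\Sigma_\D \subseteq \NNA$ there is no need to pass to a generization via a minimal prime of the quotient---Proposition \ref{asymp jump} applies directly at $x$ (condition (3) holds at $x$, so (6) gives a valuation centered at $\overline{\{x\}}$). And your careful element-by-element argument for $\NNA \subseteq \Sigma'_\D$ is exactly what underlies the paper's one-line claim that $\Sigma'_\D$ is stable under specialization; it is good that you spelled this out, since as you note the intersection over $m$ does not commute with localization in general.
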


\begin{proof}
	Since the definition of every locus is local, we may assume that $X$ is affine.
        
        First, we assume that $K_X+\D$ is a $\Q$-Cartier $\Q$-divisor.
        The inclusion $\Sigma'_\D(\fa_\bullet) \subseteq \Sigma_\D(\fa_\bullet)$ is trivial and the inclusion $\Sigma_\D(\fa_\bullet) \subseteq \NNA(\fa_\bullet)$ follows from Proposition \ref{asymp jump}.
	If $x \in \NNA(\fa_\bullet)$, there exists a divisorial valuation $v$ such that $v(\fa_\bullet)>0$ and $x \in \cen_X(v)$.
        Let  $y \in X$ be the generic point of $\cen_X(v)$.
        By Proposition \ref{asymp jump}, we have $y \in \Sigma'_\D(\fa_\bullet)$.
        Since $\Sigma'_\D(\fa_\bullet)$ is stable under specialization, we have $x \in \Sigma'_\D(\fa_\bullet)$.
        
        Suppose now that $\D$ is an arbitrary $\Q$-Weil divisor.
        Then the inclusion $\Sigma'_\D(\fa_\bullet) \subseteq \Sigma_\D(\fa_\bullet)$ is trivial.
        For the inclusion $ \Sigma_\D(\fa_\bullet) \subseteq \Gamma(\fa_\bullet)$, we take $x \not\in \Gamma(\fa_\bullet)$.
Then, we have $x \not\in \Sigma_{\D'}(\fa_\bullet)$ for every $\Q$-Weil divisor on $X$ such that $K_X+ \D'$ is a $\Q$-Cartier $\Q$-divisor.
By Proposition \ref{non gor test}, we have 
\[ \tau(X, \D, \fa^m_\bullet)_x= \sum_{\D'} \tau(X, \D', \fa^m_\bullet)_x = \sum_{\D'} \tau(X, \D')_x =\tau(X, \D)_x, \]
where $\D'$ runs through all $\Q$-Weil divisors on $X$ such that $\D' \ge \D$ and $(p^e-1)(K_X+\D')$ is an integral Cartier divisor for some $e>0$.
Hence we have the inclusion $\Sigma_\D(\fa_\bullet) \subseteq \NNA(\fa_\bullet)$.
        
        In order to show the inclusion $\NNA(\fa_\bullet) \subseteq \Sigma'_\D(\fa_\bullet)$, we take $x \not \in \Sigma'_\D(\fa_\bullet)$.
        Let $\D'$ be a $\Q$-Weil divisor on $X$ such that $K_X+ \D'$ is a $\Q$-Cartier $\Q$-divisor and $\D' \le \D$.
        Since we have the inclusion $\tau(X, \D, \fa_\bullet)_x \subseteq \tau(X, \D', \fa_\bullet)_x$, we have $x \not \in \Sigma'_{\D'} (\fa_\bullet)=\NNA(\fa_\bullet)$.
\end{proof}

\begin{cor}{\textup{(\cite[Theorem 4.2]{CL})}}
With the above notation, let $v$ be a divisorial valuation.
Then $v(\fa_\bullet)>0$ if and only if $\cen_X(v) \subseteq \NNA(\fa_\bullet)$.
\end{cor}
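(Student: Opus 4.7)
The forward implication $v(\fa_\bullet) > 0 \Rightarrow \cen_X(v) \subseteq \NNA(\fa_\bullet)$ is immediate from the definition of $\NNA(\fa_\bullet)$: such a $v$ is among the valuations appearing in the union defining $\NNA(\fa_\bullet)$, so the inclusion $\cen_X(v) \subseteq \NNA(\fa_\bullet)$ is tautological.

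For the converse, I assume $\cen_X(v) \subseteq \NNA(\fa_\bullet)$ and let $P$ denote the generic point of $\cen_X(v)$. The plan is to first pass from the geometric inclusion to positivity of the local invariant $\ord_P(\fa_\bullet)$, and then to positivity of $v(\fa_\bullet)$, using Corollaries \ref{jump locus} and \ref{main''} as bridges. Applying Corollary \ref{jump locus} with $\D = 0$ gives $P \in \NNA(\fa_\bullet) = \Sigma(\fa_\bullet)$, so there exists $m > 0$ with $\tau(X, 0, \fa_\bullet^m)_P \subsetneq \tau(X, 0)_P$. Next, the contrapositive of Corollary \ref{main''} (3), applied with $\D = 0$ and $t = m$, forces $m \cdot \ord_P(\fa_\bullet) \ge \delta$, where $\delta = \delta(X, 0, P)$ is the constant provided by that corollary; in particular $\ord_P(\fa_\bullet) > 0$. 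Finally, since $v$ is a divisorial valuation with $\cen_X(v) = \overline{\{P\}}$, it is an $\m_P$-valuation, hence integer-valued and positive on $\m_P$. An elementary induction (if $g \in \m_P^n$, then $v(g) \ge n$) yields $v(g) \ge \ord_P(g)$ for every nonzero $g \in \sO_{X,P}$; taking infima over $g \in \fa_n$ and then over $n$ gives $v(\fa_\bullet) \ge \ord_P(\fa_\bullet) > 0$.

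The potential source of trouble is that $P \in \NNA(\fa_\bullet)$ only guarantees the existence of \emph{some} auxiliary divisorial valuation $w$ with $w(\fa_\bullet) > 0$ whose center merely contains $P$; this center could strictly contain $\cen_X(v)$, in which case $w$ fails to be an $\m_P$-valuation and a direct Izumi-type comparison (Proposition \ref{Izumi}) between $v$ and $w$ would be unavailable. Routing the argument through $\ord_P$ via Corollaries \ref{jump locus} and \ref{main''} sidesteps this issue entirely, and the final passage from $\ord_P$ to the $\m_P$-valuation $v$ is elementary. No $\Q$-Gorenstein hypothesis on $X$ is needed, because the versions of Corollaries \ref{jump locus} and \ref{main''} invoked here are stated for an arbitrary $\R$-Weil divisor $\D$ (and we apply them with $\D = 0$).
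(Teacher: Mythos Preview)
Your proof is correct and follows essentially the same route as the paper's: pass from $P\in\NNA(\fa_\bullet)$ to $P\in\Sigma_\D(\fa_\bullet)$ via Corollary~\ref{jump locus}, and then to $v(\fa_\bullet)>0$. The only cosmetic difference is that the paper introduces an auxiliary $\Q$-Weil divisor $\D$ with $K_X+\D$ $\Q$-Cartier so as to invoke Proposition~\ref{asymp jump} directly for the implication $(3)\Rightarrow(5)$, whereas you keep $\D=0$ and instead unpack that implication by hand using Corollary~\ref{main''}~(3) and the elementary inequality $v\ge\ord_P$; both arrive at the same conclusion by the same underlying mechanism.
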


\begin{proof}
	We assume that $\cen_X(v) = \overline{\{ P \}} \subseteq \NNA(\fa_\bullet)$.
        We fix a $\Q$-Weil divisor $\D$ on $X$ such that $K_X+\D$ is a $\Q$-Cartier $\Q$-divisor.
        Since $P \in \NNA(\fa_\bullet)$, we have $P \in \Sigma_\D(\fa_\bullet)$.
        By Proposition \ref{asymp jump}, we have $v(\fa_\bullet)>0$.
\end{proof}

\section{Application to non-nef loci}

In this section, we first show a uniform global generation result involving test ideals (Proposition \ref{grob}).
Combining it with the main theorem (Corollary \ref{main'''}), we prove Conjecture \ref{conj} for strongly $F$-regular varieties (Corollary \ref{Nakayama}). 

From now on, we always assume that $X$ is a normal projective variety over an $F$-finite field $k$ of characteristic $p>0$.

\subsection{Global generation}
In this subsection, we recall a result on global generation and its applications proved in \cite[Section 4 and 5]{Mus}.
In \cite{Mus}, it is assumed that $X$ is smooth, $k$ is algebraically closed, and $\D=0$.
However, we can relax the assumptions.
\begin{prop}[\textup{\cite[Theorem 4.1]{Mus}}]\label{grob}
	Suppose that $\D$ is an effective $\Q$-Weil divisor on $X$ such that $K_X+\D$ is a $\Q$-Cartier $\Q$-divisor.
        Let $D$ and $L$ be Cartier divisors on $X$, $H$ be an ample and free divisor on $X$, $\lambda$ be an element in $\R_{ \ge 0}$, and $d$ be a positive integer such that $d>\dim X$.
	If $L-(K_X+\D+\lambda \cdot D)$ is ample, then the following properties hold.
	\begin{enumerate}
	\item $\tau(X, \D, \lambda \cdot |D|) \otimes \sO_X(L+dH)$ is globally generated.
	\item $\tau(X, \D, \lambda \cdot ||D||) \otimes \sO_X(L+dH)$ is globally generated.
	\end{enumerate}
\end{prop}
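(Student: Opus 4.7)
The plan is to follow Musta\c{t}\v{a}'s proof of the smooth case \cite[Theorem 4.1]{Mus}, with modifications to accommodate the boundary $\D$ and the normal (not necessarily regular) base $X$. The two main ingredients are Castelnuovo-Mumford regularity with respect to the ample basepoint-free divisor $H$ and a Kawamata-Viehweg style vanishing for $\tau(X, \D)$.

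For $(1)$, fix $e \ge 1$ large enough so that $(p^e - 1)(K_X + \D)$ is an integral Cartier divisor and
\[
\tau(X, \D, \lambda \cdot |D|) = \widetilde{\phi}_{e, \D}\bigl(F^e_*(\fb(|D|)^m \tau(X, \D) \sL_{e, \D})\bigr)
\]
by Proposition \ref{phi acc}, where $m := \lceil \lambda(p^e - 1) \rceil$. Tensoring with $\sO_X(L + dH)$ and using the projection formula yields a surjection
\[
F^e_*\bigl(\fb(|D|)^m \tau(X, \D) \otimes \sO_X(M_e)\bigr) \twoheadrightarrow \tau(X, \D, \lambda \cdot |D|) \otimes \sO_X(L + dH),
\]
with $M_e := p^e(L + dH) + (1 - p^e)(K_X + \D)$. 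Since $\fb(|D|)^m \otimes \sO_X(mD)$ is globally generated by its sections, there is a further surjection
\[
V \otimes F^e_*\bigl(\tau(X, \D) \otimes \sO_X(M_e - mD)\bigr) \twoheadrightarrow F^e_*\bigl(\fb(|D|)^m \tau(X, \D) \otimes \sO_X(M_e)\bigr)
\]
for a suitable finite-dimensional vector space $V$. Hence it suffices to show that $\sH := F^e_*(\tau(X, \D) \otimes \sO_X(M_e - mD))$ is globally generated.

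By Castelnuovo-Mumford regularity (using $d > \dim X$), this reduces to checking $H^i(X, \sH(-iH)) = 0$ for $1 \le i \le \dim X$. As $F^e$ is affine and $F^{e*}\sO_X(-iH) = \sO_X(-ip^e H)$, the projection formula gives
\[
H^i(X, \sH(-iH)) = H^i\bigl(X, \tau(X, \D) \otimes \sO_X(M_e - mD - ip^e H)\bigr).
\]
Writing $m = \lambda(p^e - 1) + \theta$ with $\theta \in [0, 1)$, a direct calculation shows
\[
M_e - mD - ip^e H - (K_X + \D) = p^e A + (\lambda - \theta) D + p^e (d - i) H,
\]
where $A := L - (K_X + \D + \lambda D)$ is ample by hypothesis. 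Since $d - i \ge 1$, the $\R$-divisor $A + (d - i)H$ is ample, and by openness of the ample cone the bounded perturbation $(\lambda - \theta)D/p^e$ preserves ampleness once $e$ is large, so the displayed class is ample. The required vanishing then follows from Schwede's Kawamata-Viehweg type vanishing for $\tau(X, \D)$: if $N$ is a Cartier divisor with $N - (K_X + \D)$ ample, then $H^i(X, \tau(X, \D) \otimes \sO_X(N)) = 0$ for $i > 0$.

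For $(2)$, Proposition-Definition \ref{asympt} gives, for all sufficiently divisible $n$,
\[
\tau(X, \D, \lambda \cdot ||D||) = \tau(X, \D, \fb(|nD|)^{\lambda/n}) = \tau\bigl(X, \D, (\lambda/n) \cdot |nD|\bigr).
\]
Applying $(1)$ with $D$ replaced by $nD$ and $\lambda$ by $\lambda/n$ finishes the proof, since $L - (K_X + \D + (\lambda/n)(nD)) = L - (K_X + \D + \lambda D)$ remains ample. The main technical obstacle is Schwede's Kodaira-type vanishing for test ideals in positive characteristic, which replaces the usual Kawamata-Viehweg vanishing of characteristic zero; this is standard in the $F$-singularities literature but requires a Frobenius-splitting argument of its own.
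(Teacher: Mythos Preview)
Your overall strategy---reduce (2) to (1), realize the test ideal as an image of $\widetilde{\phi}_{e,\D}$, build the chain of surjections, and check global generation via Castelnuovo--Mumford regularity---matches both the paper's proof and Musta\c{t}\v{a}'s. The gap is the final vanishing step. You invoke a ``Kawamata--Viehweg type vanishing for $\tau(X,\D)$'', namely that $H^i(X,\tau(X,\D)\otimes\sO_X(N))=0$ whenever $N-(K_X+\D)$ is ample. No such theorem is available in positive characteristic: already for $X$ smooth and $\D=0$ one has $\tau(X,\D)=\sO_X$, and your claim reduces to ordinary Kodaira vanishing, which fails by Raynaud's examples. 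So this step cannot stand as written.

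The paper (following \cite{Mus}) uses \emph{Fujita's vanishing theorem} instead, which is valid in any characteristic for an arbitrary coherent sheaf. After first arranging $\lambda\in\Q$ and $(q-1)(K_X+\D)$ Cartier, one finds a fixed divisor $T$ with $E_n-T$ nef for all $n\ge 1$, where $E_n:=-\lceil\lambda q^n\rceil D+(1-q^n)(K_X+\D)+q^nL$; Fujita's theorem applied to the coherent sheaf $\tau(X,\D)$ then gives $H^i\bigl(X,\tau(X,\D)\otimes\sO_X(E_n+(d-i)q^nH)\bigr)=0$ for $n\gg 0$, since $(d-i)q^n\to\infty$. Your own computation already expresses the relevant divisor as $p^e$ times a fixed ample class plus a bounded perturbation, which is precisely the shape Fujita needs---so the repair is to replace the nonexistent Nadel-type input by Fujita vanishing.
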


\begin{proof}
	(2) follows from (1).
        We prove (1). The proof is essentially the same as the proof in \cite{Mus}.
        
        By enlarging $\lambda$ and $\D$, we may assume that $\lambda \in \Q$ and $(q-1)(K_X+\D)$ is an integral Cartier divisor for some $q=p^e>1$.
        By Proposition \ref{phi acc}, we have 
        \[\widetilde{\phi}_{en, \D}(F^{en}_*(\mathfrak{b}(|D|)^{\lceil \lambda q^n \rceil} \tau(X, \D) \cdot \sL_{en, \D}))= \tau(X, \D, \lambda |D|),\]
         for all $n \gg 0$.
        Since $H^0(X, D) \otimes \sO_X(-D) \to \mathfrak{b}(|D|)$ is surjective, we have a surjective morphism 
        \[W_n \otimes \sO_X(- \lceil \lambda q^n \rceil D)) \to \mathfrak{b}(|D|)^{\lceil \lambda q^n \rceil},\] 
        where $W_n = \mathrm{Sym}^{\lceil \lambda q^n \rceil} (H^0(X, D))$.
        Hence, we have a surjective morphism 
        \begin{eqnarray*}
        W_n \otimes F^{en}_*(\sO_X(- \lceil \lambda q^n \rceil D + (1-q^n)(K_X+\D)+q^n(L+dH)) \tau(X, \D)) \\
        \to \tau(X, \D, \lambda |D|) \otimes \sO_X(L+dH).
        \end{eqnarray*}
        We write $E_n :=- \lceil \lambda q^n \rceil D + (1-q^n)(K_X+\D) +q^n L$.
        By \cite[p.11, Claim]{Mus}, there exists a divisor $T$ on $X$ such that $E_n -T$ is nef for all $n \ge 1$.
        By Fujita's vanishing theorem, we have 

        \[H^i(X, F^{en}_*(\sO_X(E_n + q^n d H)) \tau(X, \D)) \otimes \sO_X(-iH))=0,\]
         for all $i>0$ and for all $n \gg 0$.
        This shows 
        \[W_n \otimes F^{en}_*(\sO_X(E_n+q^n d H)) \tau(X, \D))\]
         is globally generated for all $n \gg 0$ and this completes the proof.
\end{proof}

By using Proposition \ref{grob}, we can show the following three properties.
The reader is referred to \cite[Proposition 5.1, 5.2, 5.3]{Mus} for proofs.

\begin{cor}[\textup{\cite[Proposition 5.1]{Mus}}]
	Let $\D$ be a $\Q$-Weil divisor on $X$, $D$ and $E$ be big $\Q$-Cartier $\Q$-divisors on $X$ and $t$ be an element in $\R_{ \ge 0}$.
        If $D \equiv _ {\num} E$, then $\tau(X, \D, t \cdot ||D||)= \tau(X, \D, t \cdot ||E||)$.
\end{cor}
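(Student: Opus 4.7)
The proof is an adaptation of Musta\c{t}\v{a}'s argument \cite[Proposition 5.1]{Mus} to the $\Q$-Gorenstein singular setting, made possible by the version of Proposition \ref{grob} established in this paper. By the symmetry of the hypothesis it suffices to prove
\[
\tau(X, \D, t \cdot ||D||) \subseteq \tau(X, \D, t \cdot ||E||).
\]

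The plan is as follows. First I would reduce to the $\Q$-Gorenstein case via Proposition \ref{non gor test}: both sides can be written as sums $\sum_{\D'} \tau(X, \D', t \cdot ||D||)$ and $\sum_{\D'} \tau(X, \D', t \cdot ||E||)$ over $\Q$-Weil divisors $\D' \ge \D$ with $(p^e-1)(K_X+\D')$ integral Cartier, so it suffices to treat each such $\D'$ separately. Next I would apply Proposition-Definition \ref{asympt} to fix a sufficiently divisible integer $m$ with $mt \in \Z$ such that $\tau(X, \D', t \cdot ||E||) = \tau(X, \D', \fa_m(E)^{t/m})$, reducing the target containment to $\tau(X, \D', t \cdot ||D||) \subseteq \tau(X, \D', \fa_m(E)^{t/m})$. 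Then I would fix an ample and free Cartier divisor $H$, an integer $d > \dim X$, and a Cartier divisor $L$ so that $L - (K_X + \D' + tD)$ is ample; since $D \equiv_{\num} E$ and ampleness depends only on the numerical class, the same $L$ also makes $L - (K_X + \D' + tE)$ ample, so Proposition \ref{grob}(2) gives that both $\tau(X, \D', t \cdot ||D||) \otimes \sO_X(L + dH)$ and $\tau(X, \D', t \cdot ||E||) \otimes \sO_X(L + dH)$ are globally generated.

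The core step is to transfer the formula from $D$ to $E$. Expand
\[
\tau(X, \D', t \cdot ||D||) = \bigcup_{n} \widetilde{\phi}_{en, \D'}\bigl(F^{en}_{*}\bigl(\fa_{m}(D)^{\lceil tq^{n}/m\rceil}\,\tau(X, \D')\,\sL_{en, \D'}\bigr)\bigr)
\]
using Proposition \ref{phi acc}. For sufficiently divisible $m$ I would choose a sufficiently ample Cartier divisor $B$ so that Fujita's vanishing yields cohomological identifications between the linear systems $|mD + B + N|$ and $|mE + B + N|$ for any nef Cartier $N$, arising from $mD + B \equiv_{\num} mE + B$. Combined with the global generation of the relevant twists by $L + dH$, this permits replacing each generator of $\fa_{m}(D)$ in the formula above by a corresponding generator of $\fa_{m}(E)$, up to twists absorbable into $L$, $H$, and the Frobenius pushforward, which produces the containment of sections inside $\tau(X, \D', \fa_{m}(E)^{t/m})$.

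The principal difficulty is making this replacement uniformly in the Frobenius exponent $n$: the auxiliary data $B$, $L$, $H$, and $m$ must be chosen so that the same choices work for every large $n$, so that passage to the ascending limit identifying the asymptotic test ideal preserves the containment. All the essential tools (Proposition \ref{grob}, Proposition \ref{phi acc}, Fujita's vanishing, and the reduction to $\Q$-Gorenstein $\D'$ via Proposition \ref{non gor test}) are available in our setting, so once this uniformity is in place the inclusion follows and the opposite inclusion then holds by symmetry.
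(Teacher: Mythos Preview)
The paper does not supply its own proof here; it defers entirely to \cite[Proposition~5.1]{Mus}, noting only that Musta\c{t}\v{a}'s argument goes through once his global generation theorem is replaced by Proposition~\ref{grob}. Your outline correctly assembles the surrounding ingredients (the reduction to log $\Q$-Gorenstein $\D'$ via Proposition~\ref{non gor test}, the choice of $m$ via Proposition-Definition~\ref{asympt}, the Frobenius expansion from Proposition~\ref{phi acc}, and the role of Proposition~\ref{grob}), so in spirit you are following the intended route.

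The gap is in your ``core step,'' and it is more basic than the uniformity-in-$n$ issue you flag at the end. Fujita vanishing for $mD+B+N$ and $mE+B+N$ yields only equality of dimensions $h^0(mD+B+N)=h^0(mE+B+N)$, via vanishing of higher cohomology and numerical invariance of $\chi$; it produces no map between the two linear systems, and their base ideals can certainly differ. There is therefore no operation that ``replaces each generator of $\fa_m(D)$ by a corresponding generator of $\fa_m(E)$'': these are simply different ideals of $\sO_X$, and the numerically trivial twist $\sO_X(m(D-E))$ separating the two pictures cannot be ``absorbed'' into $L$, $H$, or the Frobenius pushforward in any way that converts sections of $\sO_X(mD)$ into sections of $\sO_X(mE)$. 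As written, the sketch never explains how the hypothesis $D\equiv_{\num}E$ is actually used to force the containment of test ideals; you should consult Musta\c{t}\v{a}'s proof for the precise mechanism rather than relying on a dimension count.
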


\begin{cor}[\textup{\cite[Proposition 5.2]{Mus}}]
	Let $\D$ be a $\Q$-Weil divisor on $X$ such that $K_X+\D$ is a $\Q$-Cartier $\Q$-divisor and $D$ be a $\Q$-Cartier $\Q$-divisor on $X$.
        Then the set of $F$-jumping numbers 
        \[\left\{ t \in \R_{\ge 0} \mid \forall s>t , \tau(X, \D, s \cdot ||D||) \subsetneq \tau(X, \D, t \cdot ||D||)  \right\} \]
        is a discrete set.
\end{cor}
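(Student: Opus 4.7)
The plan is to prove discreteness by establishing the stronger statement that, for every $T > 0$, the function $t \mapsto \tau(X, \D, t \cdot ||D||)$ takes only finitely many distinct values on the interval $[0, T]$; from this it follows immediately that only finitely many $t \in [0, T]$ can satisfy the strict-decrease condition $\tau(X, \D, s \cdot ||D||) \subsetneq \tau(X, \D, t \cdot ||D||)$ for all $s > t$. The main tool is the uniform global generation supplied by Proposition \ref{grob}(2), combined with a finite-dimensionality argument inside a fixed ambient coherent sheaf.

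As a preliminary reduction, I would replace $D$ by a positive integer multiple so as to assume $D$ is an integral Cartier divisor; since $\tau(X, \D, t \cdot ||rD||) = \tau(X, \D, rt \cdot ||D||)$, this only rescales the $t$-parameter and hence preserves discreteness of the jumping-number set. Next, fix $T > 0$, choose any ample free Cartier divisor $H$ on $X$, and pick an integer $m$ large enough that the $\R$-Cartier divisor $mH - (K_X + \D + T \cdot D)$ is ample. Setting $L := mH$ and using convexity of the ample cone, one then has that $L - (K_X + \D + t \cdot D)$ is ample for every $t \in [0, T]$. Fix also an integer $d > \dim X$.

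With these choices in place, Proposition \ref{grob}(2) guarantees that the coherent sheaf $\mathcal{F}_t := \tau(X, \D, t \cdot ||D||) \otimes \sO_X(L + dH)$ is globally generated for every $t \in [0, T]$. Since $\tau(X, \D, t \cdot ||D||) \subseteq \tau(X, \D)$ for every $t \ge 0$ (immediate from the defining property, as $\tau(X, \D)$ is itself uniformly $(\D, \fa^t, F)$-compatible for every ideal $\fa$ and every $t$), each $\mathcal{F}_t$ is contained in the fixed coherent sheaf $\mathcal{F} := \tau(X, \D) \otimes \sO_X(L + dH)$, whose space of global sections $V := H^0(X, \mathcal{F})$ is finite-dimensional over $k$ by projectivity of $X$. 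Setting $V_t := H^0(X, \mathcal{F}_t) \subseteq V$, the global generation of $\mathcal{F}_t$ shows that $\mathcal{F}_t$ equals the image of the evaluation map $V_t \otimes_k \sO_X \to \mathcal{F}$, so that $\mathcal{F}_t$---and hence $\tau(X, \D, t \cdot ||D||)$ itself---is uniquely recovered from $V_t$. As $t$ ranges over $[0, T]$ the subspaces $V_t$ form a monotonically decreasing chain in the finite-dimensional space $V$, so only finitely many distinct $V_t$ occur; the desired finiteness of values of $\tau(X, \D, t \cdot ||D||)$ on $[0, T]$, and hence discreteness of the jumping-number set, follows.

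The only genuine subtlety I anticipate is arranging the uniform choice of $L$ so that the ampleness hypothesis of Proposition \ref{grob} holds across the whole interval $[0, T]$; the convex-cone argument above handles this. The remainder of the proof is a formal finiteness manoeuvre based on coherence and global generation, so I do not foresee any substantive obstacle.
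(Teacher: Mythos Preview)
Your argument is correct and is precisely the approach of \cite[Proposition~5.2]{Mus}, to which the paper defers without reproducing the details. One minor point: to apply convexity of the ample cone you need ampleness at \emph{both} endpoints, so $m$ should also be taken large enough that $mH - (K_X + \Delta)$ is ample (your stated condition only covers $t = T$); this is of course an easy adjustment and does not affect the argument.
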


\begin{propdef}[\textup{\cite[Proposition 5.3]{Mus}}]
	Let $\D$ be a $\Q$-Weil divisor on $X$ such that $K_X+\D$ is a $\Q$-Cartier $\Q$-divisor, $D$ be an $\R$-Cartier $\R$-divisor on $X$ and $\lambda$ be an element in $\R_{\ge 0}$.
        Then there exists a unique minimal element among all ideals of the form $\tau(X, \D, \lambda \cdot ||D+A||)$,
        where $A$ runs through all ample $\R$-divisors on $X$ such that $D+A$ is a $\Q$-Cartier $\Q$-divisor.
        We denote this ideal as $\tau_+(X, \D, \lambda \cdot ||D||)$.
        Furthermore, there exists an open neighborhood $\mathscr{U}$ of the origin in $\mathrm{N}^1(X)_\R$ such that
        \[
        \tau_+(X, \D, \lambda \cdot ||D||)= \tau (X, \D, \lambda \cdot ||D+A||)
        \]
        for every ample $\R$-divisor $A$ on $X$ such that $D+A$ is a $\Q$-Cartier $\Q$-divisor and the class of $A$ is in $\mathscr{U}$.
\end{propdef}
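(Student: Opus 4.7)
The plan is to follow the strategy of \cite[Proposition 5.3]{Mus}, adapted to the present log-$\Q$-Gorenstein setting. By the numerical-invariance corollary proved just above, the ideal $\tau(X, \D, \lambda \cdot ||D+A||)$ depends only on the numerical class $[D+A] \in \rN^1(X)_\R$; thus the assignment $A \mapsto \tau(X, \D, \lambda \cdot ||D+A||)$ factors through $\rN^1(X)_\R$, and we may freely work with numerical classes throughout.

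The central step is the following monotonicity claim: if $A_1, A_2$ are ample $\R$-divisors with $D+A_i$ a $\Q$-Cartier $\Q$-divisor and $[A_1 - A_2]$ is an ample class, then $\tau(X, \D, \lambda \cdot ||D+A_2||) \subseteq \tau(X, \D, \lambda \cdot ||D+A_1||)$. To prove it, pick an effective ample $\Q$-Cartier $\Q$-divisor $C$ with $[C] = [A_1 - A_2]$ (possible because ample classes are big and admit effective $\Q$-representatives), so that by numerical invariance one may replace $D + A_1$ with $D + A_2 + C$. The inclusion of linear systems $|n(D+A_2)| + nC \subseteq |n(D+A_2+C)|$ yields $\fa_n(D+A_2+C) \supseteq \sO_X(-nC) \cdot \fa_n(D+A_2)$; invoking Proposition \ref{test basic} then gives
\[
\tau(X, \D, \lambda \cdot ||D+A_2+C||) \supseteq \tau(X, \D + \lambda C, \lambda \cdot ||D+A_2||).
\]
Bridging the residual gap between $\tau(X, \D + \lambda C, \lambda \cdot ||D+A_2||)$ and $\tau(X, \D, \lambda \cdot ||D+A_2||)$ is the crux. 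I would decompose $C$ numerically as a telescoping sum $C \equiv C_1 + \cdots + C_N$ of small effective ample $\Q$-Cartier $\Q$-divisors and iterate the above computation, using Proposition \ref{fpt}(2) (extended to the asymptotic setting by stabilizing the graded sequence $\fa_\bullet(D+A_2)$ at a sufficiently large fixed stage $n$) to absorb each perturbation $\D \mapsto \D + \lambda C_i$ without changing the test ideal.

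Granting monotonicity, the family $\{\tau(X, \D, \lambda \cdot ||D+A||)\}_A$ is downward directed: for any $A_1, A_2$ in the family, the coset $-[D] + \rN^1(X)_\Q$ is dense in $\rN^1(X)_\R$, so one can pick an ample $\R$-divisor $A_3$ with $[A_i - A_3]$ ample for $i = 1,2$ and $D + A_3$ a $\Q$-Cartier $\Q$-divisor; monotonicity then gives $\tau(X, \D, \lambda \cdot ||D+A_3||) \subseteq \tau(X, \D, \lambda \cdot ||D+A_i||)$. Combined with the Noetherianity of $X$ and a finiteness argument — using the uniform global generation of Proposition \ref{grob} to bound the set of distinct test ideals arising as $[A]$ varies in a bounded region of the ample cone — the family admits a unique minimum, which we denote $\tau_+(X, \D, \lambda \cdot ||D||)$.

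For the open neighborhood assertion, let $A_0$ be an ample $\R$-divisor with $D + A_0$ a $\Q$-Cartier $\Q$-divisor realizing this minimum. The set
\[
\mathscr{U} := \{c \in \rN^1(X)_\R : [A_0] - c \text{ is ample}\}
\]
is a translate of the open ample cone, hence open in $\rN^1(X)_\R$, and contains the origin because $[A_0]$ is itself ample. For any ample $A$ with $[A] \in \mathscr{U}$ and $D + A$ a $\Q$-Cartier $\Q$-divisor, the class $[A_0 - A]$ is ample, so monotonicity yields $\tau(X, \D, \lambda \cdot ||D+A||) \subseteq \tau_+(X, \D, \lambda \cdot ||D||)$, and minimality forces equality. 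The main obstacle throughout is the monotonicity step — specifically, making the subdivision-and-absorb argument uniform across the iteration; the key missing input is an asymptotic version of Proposition \ref{fpt}(2), which should follow from its proof after stabilizing the graded sequence at a fixed finite stage before perturbing $\D$.
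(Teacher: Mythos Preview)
Your overall architecture --- numerical invariance, a monotonicity lemma, downward directedness, stabilization via the global generation of Proposition~\ref{grob}, and the open-neighborhood construction from a realizing $A_0$ --- matches the argument the paper defers to \cite[Proposition~5.3]{Mus}. The gap is in your proof of monotonicity. By fixing an \emph{effective} representative $C$ of $[A_1-A_2]$ you are forced to pass through $\tau(X,\D+\lambda C,\lambda\cdot\|D+A_2\|)$ and then absorb $\lambda C$; your subdivision-and-absorb scheme needs the threshold in Proposition~\ref{fpt}(2) to be uniform over the moving boundaries $\D+\lambda(C_1+\cdots+C_{i-1})$, and ``stabilizing the graded sequence at a fixed stage $n$'' does not deliver that uniformity --- the $\epsilon$ in Proposition~\ref{fpt}(2) still depends on the current boundary, so the iteration need not terminate in finitely many steps.

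The fix is to avoid the detour. Since $A_1-A_2=(D+A_1)-(D+A_2)$ is an ample $\Q$-Cartier $\Q$-divisor, for all sufficiently divisible $m$ the divisor $m(A_1-A_2)$ is integral Cartier and globally generated, so $\fb(|m(A_1-A_2)|)=\sO_X$. The multiplication map on sections then gives
\[
\fa_m(D+A_2)=\fa_m(D+A_2)\cdot\fb(|m(A_1-A_2)|)\subseteq\fa_m(D+A_1),
\]
and taking $m$ divisible enough to compute both asymptotic test ideals yields $\tau(X,\D,\lambda\cdot\|D+A_2\|)\subseteq\tau(X,\D,\lambda\cdot\|D+A_1\|)$ with no perturbation of $\D$ at all. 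This is the argument in \cite{Mus}; with it, the remainder of your sketch (directedness, the decreasing chain of ideals corresponding via global generation to a decreasing chain of subspaces of a fixed finite-dimensional $H^0$, hence stabilizing, and the choice of $\mathscr{U}$) goes through as written.
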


\subsection{Nakayama's theorem about non-nef loci}

\begin{thm}
Let $X$ be a normal projective variety over an $F$-finite field, 
let $\D$ be an effective $\Q$-Weil divisor on $X$ such that $K_X+\D$ is a $\Q$-Cartier $\Q$-divisor,
let $D$ be a big $\Q$-Cartier $\Q$-divisor on $X$, 
and let $P$ be a point of $X$.
Assume that $(X, \D)$ is strongly $F$-regular at $P$.
Then the following are equivalent:
\begin{enumerate}
\item $P \in \B_-(D)$.
\item $P \in \bigcup_{m \ge 1} Z( \tau(X, \D, m||D||))$.
\item There exists a divisorial valuation $v$ such that $\overline{\{ P \}} \subseteq \cen_X(v)$ and $v(||D||)>0$.
\end{enumerate}
\end{thm}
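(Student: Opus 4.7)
The plan is to prove the three implications $(2)\Leftrightarrow(3)$, $(3)\Rightarrow(1)$, and $(1)\Rightarrow(2)$ (the last by contrapositive). The equivalence $(2)\Leftrightarrow(3)$ is a direct application of Corollary \ref{jump locus} to the graded sequence $\fa_{\bullet}(D)$ of Remark \ref{||D||}: strong $F$-regularity of $(X,\D)$ at $P$ gives $\tau(X,\D)_{P}=\sO_{X,P}$, so condition $(2)$ says exactly that $P\in\Sigma_{\D}(\fa_{\bullet}(D))$; meanwhile, using the identity $v(\|D\|)=v(\fa_{\bullet}(D))$ and the closedness of $\cen_X(v)$ (so that $P\in\cen_X(v)\Leftrightarrow\overline{\{P\}}\subseteq\cen_X(v)$), condition $(3)$ says $P\in\NNA(\fa_{\bullet}(D))$. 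Corollary \ref{jump locus} identifies these two sets.

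For $(3)\Rightarrow(1)$, given a divisorial valuation $v$ with $P\in\cen_X(v)$ and $v(\|D\|)>0$, I would use $v(\|D\|)=v_{\num}(D)$ for big $D$ (Proposition \ref{vnum basic}(1)) together with the continuity of $v_{\num}$ on $\mathrm{Big}(X)_{\R}$ (Proposition \ref{vnum basic}(2)) to choose an ample $\Q$-Cartier $\Q$-divisor $A$ small enough that $v_{\num}(D+A)>0$. Then $v(\|D+A\|)>0$, so every $E\in|D+A|_{\Q}$ satisfies $v(E)>0$, giving $\cen_X(v)\subseteq\Supp(E)$ and so $P\in\B(D+A)\subseteq\B_{-}(D)$.

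For $(1)\Rightarrow(2)$, I would argue by contrapositive and follow the sketch in the introduction. Assume $\tau(X,\D,m\|D\|)_{P}=\sO_{X,P}$ for every $m\ge 1$. Fix an ample Cartier divisor $A_{0}$ such that $A_{0}-(K_X+\D)$ is ample (possible since the ample cone is open), an ample free divisor $H$, and an integer $d>\dim X$, and set $A:=A_{0}+dH$, an ample Cartier divisor. For each positive integer $m$ such that $mD$ is integral Cartier, applying Proposition \ref{grob}(2) with $L=mD+A_{0}$ and $\lambda=m$ shows that $\tau(X,\D,m\|D\|)\otimes\sO_{X}(mD+A)$ is globally generated; since the test ideal is trivial at $P$, this yields a global section of $\sO_{X}(mD+A)$ non-vanishing at $P$, whence $P\not\in\B(mD+A)=\B(D+A/m)$. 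To conclude, for any ample $\R$-Cartier $A''$ with $D+A''$ a $\Q$-Cartier $\Q$-divisor (so $A''$ is itself $\Q$-Cartier), pick $m$ large so that $A''-A/m$ is ample; summing a divisor in $|D+A/m|_{\Q}$ avoiding $P$ with a divisor in $|A''-A/m|_{\Q}$ avoiding $P$ (which exists, as an ample $\Q$-Cartier $\Q$-divisor has empty stable base locus) produces an element of $|D+A''|_{\Q}$ avoiding $P$, showing $P\not\in\B(D+A'')$ and therefore $P\not\in\B_{-}(D)$.

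The hardest part will be this final bookkeeping step in $(1)\Rightarrow(2)$ — promoting $P\not\in\B(D+A/m)$ for the specific sequence $A/m\to 0$ to $P\not\in\B(D+A'')$ for every allowable ample $A''$ in the union defining $\B_{-}(D)$ — although it is fairly standard. The main theorem of the paper enters the argument only via Corollary \ref{jump locus} (which relies on Corollary \ref{main''}(3)), while Proposition \ref{grob} is used as a Musta\c{t}\v{a}-style black-box uniform global generation result.
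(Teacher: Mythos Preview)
Your proposal is correct and follows essentially the same approach as the paper: the paper proves $(3)\Rightarrow(1)$ (declared ``obvious''), $(2)\Rightarrow(3)$ via Corollary \ref{jump locus}, and $(1)\Rightarrow(2)$ by contrapositive using Proposition \ref{grob}, exactly as you outline. The only differences are expository: you spell out the continuity argument for $(3)\Rightarrow(1)$ and the bookkeeping for $\B_-(D)=\bigcup_m \B(D+A/m)$ in $(1)\Rightarrow(2)$, whereas the paper leaves the former as ``obvious'' and the latter implicit (it is \cite[Proposition 2.2]{Mus}, cited elsewhere in the paper).
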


\begin{proof}
	The proof is essentially the same as \cite[Proposition 2.8]{ELMNP} (cf. \cite[Theorem 6.2]{Mus} and \cite[Section 4]{CdB}).
        The implication (3) $\Rightarrow$ (1) is obvious.
Let $\fa_\bullet :=\fa_\bullet(D)$ be as in Remark \ref{||D||}.
If the assertion in (2) holds, then we have $P \in \Sigma_\D (\fa_\bullet)$ since $\tau(X, \D)_P=\sO_{X,P}$. 
By Corollary \ref{jump locus}, we have $P \in \Gamma(\fa_\bullet)$, which shows the implication (2) $\Rightarrow$ (3).
        For the implication (1) $\Rightarrow$ (2), we assume $P \notin \bigcup_{m \ge 1} Z( \tau(X, \D, m||D||))$.
        By Proposition \ref{grob}, there exists an ample divisor $A$ on $X$ such that $\tau(X, \D, m \cdot ||D||) \otimes \sO_X(mD+A)$ is globally generated for all $m>0$.
        Therefore, $\sO_X(mD+A)$ is globally generated at $P$ for all $m>0$, which implies $P \notin \B_{-}(D)$.
        
\end{proof}

\begin{cor}\label{Nakayama Q}
Let $X$ be a normal projective variety over an $F$-finite field,
$D$ be a big $\Q$-Cartier $\Q$-divisor on $X$ and $P$ be a point of $X$.
Assume that $X$ is strongly $F$-regular at $P$.
Then the following are equivalent:
\begin{enumerate}
\item $P \in \B_-(D)$.
\item $P \in \bigcup_{m \ge 1} Z( \tau(X, m||D||))$.
\item There exists a divisorial valuation $v$ such that $\overline{\{ P \}} \subseteq \cen_X(v)$ and $v(||D||)>0$.
\end{enumerate}
\end{cor}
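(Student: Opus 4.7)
The strategy is to deduce the corollary from the theorem immediately preceding it, reducing to the log $\Q$-Gorenstein case by producing a suitable boundary divisor on $X$.

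For the equivalence $(2) \Leftrightarrow (3)$, I would argue directly from Corollary \ref{jump locus}. Since $X$ is strongly $F$-regular at $P$, we have $\tau(X)_P = \sO_{X,P}$, so the condition $P \in \bigcup_{m \ge 1} Z(\tau(X, m||D||))$ is exactly $P \in \Sigma(\fa_\bullet(D))$. Corollary \ref{jump locus} identifies $\Sigma(\fa_\bullet(D))$ with $\NNA(\fa_\bullet(D))$, and since $v(\fa_\bullet(D)) = v(||D||)$ for every divisorial valuation $v$ by Remark \ref{||D||}, this is exactly $(3)$.

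For the equivalence $(1) \Leftrightarrow (3)$, I would invoke the preceding theorem after producing a log $\Q$-Gorenstein boundary. Using Proposition \ref{non gor test} at the local ring $\sO_{X,P}$, the equality $\tau(X)_P = \sum_{\D'} \tau(X, \D')_P = \sO_{X,P}$ in a local ring forces some summand to equal the unit ideal; this yields an effective $\Q$-Weil divisor $\D'$ on $X$ with $K_X + \D'$ being $\Q$-Cartier and $(X, \D')$ strongly $F$-regular at $P$. Applying the preceding theorem to the triple $(X, \D', D)$ then gives $(1) \Leftrightarrow P \in \bigcup_{m \ge 1} Z(\tau(X, \D', m||D||))$, and the latter condition is equivalent to $(3)$ by Corollary \ref{jump locus} applied to $(X, \D')$.

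The main obstacle is the production of the global boundary $\D'$: Proposition \ref{non gor test} is stated for affine schemes, while the preceding theorem requires $K_X + \D'$ to be $\Q$-Cartier on the whole projective $X$ in order for Proposition \ref{grob} to apply globally. One therefore needs a globalization step that promotes the local choice of $\D'$ near $P$ given by Proposition \ref{non gor test} to a global effective $\Q$-Weil divisor with the required Cartier property. Once this step is carried out, the rest of the argument is a clean assembly of the preceding theorem together with Corollary \ref{jump locus}.
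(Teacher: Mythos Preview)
Your approach is essentially identical to the paper's: produce, via Proposition \ref{non gor test}, an effective $\Q$-Weil divisor $\D$ with $K_X+\D$ $\Q$-Cartier and $(X,\D)$ strongly $F$-regular at $P$, then invoke the preceding theorem for $(1)\Leftrightarrow(3)$ and Corollary \ref{jump locus} for $(2)\Leftrightarrow(3)$. The globalization issue you flag---passing from the affine statement of Proposition \ref{non gor test} to a global $\D$ on the projective $X$---is passed over in silence in the paper as well, so your proposal matches the paper's proof both in content and in what is left implicit.
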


\begin{proof}
	By Proposition \ref{non gor test}, there exists 
        an effective $\Q$-Weil divisor $\D$ on $X$ such that $K_X+\D$ is a $\Q$-Cartier $\Q$-divisor
        and $(X, \D)$ is strongly $F$-regular at $P$.
        Hence (1) and (3) are equivalent.
        By Corollary \ref{jump locus}, (2) and (3) are equivalent.
\end{proof}

\begin{cor}[\textup{\cite[Corollary 4.7]{CdB}}]\label{Nak general}
Let $X$ be a normal projective variety over an $F$-finite field and $D$ be an $\R$-Cartier $\R$-divisor on $X$.
Then we have 
\[
\NNef(D) \setminus Z( \tau(X)) = \B_{-}(D) \setminus Z( \tau(X))
\]
\end{cor}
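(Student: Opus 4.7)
The plan is to bootstrap from Corollary \ref{Nakayama Q}, which handles the big $\Q$-Cartier $\Q$-divisor case, to the general $\R$-Cartier $\R$-divisor setting by perturbing $D$ with small ample $\R$-divisors. If $D$ is not pseudo-effective, then by convention $\NNef(D) = X = \B_{-}(D)$, so the identity is trivial. Henceforth I assume $D$ is pseudo-effective and fix a point $P \in X \setminus Z(\tau(X))$, so that $X$ is strongly $F$-regular at $P$.

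For the forward inclusion, suppose $P \in \NNef(D)$, so $v_{\num}(D)>0$ for some divisorial valuation $v$ with $P \in \cen_X(v)$. Using the supremum defining $v_{\num}$ on the non-big pseudo-effective locus --- or continuity of $v_{\num}$ on the big cone (Proposition \ref{vnum basic}(2)) when $D$ is big --- together with a small rational perturbation, I can choose an ample $\R$-Cartier $\R$-divisor $A$ such that $D+A$ is a big $\Q$-Cartier $\Q$-divisor with $v_{\num}(D+A)>0$, i.e.~$v(||D+A||)>0$ by Proposition \ref{vnum basic}(1). Corollary \ref{Nakayama Q} then yields $P \in \B_{-}(D+A)$, so $P \in \B((D+A)+B)$ for some ample $B$ with $(D+A)+B$ a $\Q$-Cartier $\Q$-divisor; since $A+B$ is ample and $D+(A+B)$ is a $\Q$-Cartier $\Q$-divisor, this shows $P \in \B_{-}(D)$.

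For the reverse inclusion, suppose $P \in \B_{-}(D)$, so $P \in \B(D+A_0)$ for some ample $A_0$ with $D+A_0$ a $\Q$-Cartier $\Q$-divisor. Fix any ample $\Q$-Cartier $\Q$-divisor $H$ and a rational $\epsilon>0$ small enough that $A_0' \coloneqq A_0 - \epsilon H$ remains ample. Then $D+A_0' = (D+A_0)-\epsilon H$ is a big $\Q$-Cartier $\Q$-divisor, and the decomposition $D+A_0=(D+A_0')+\epsilon H$ combined with $P \in \B(D+A_0)$ shows $P \in \B_{-}(D+A_0')$. Applying Corollary \ref{Nakayama Q} to $D+A_0'$ at $P$ produces a divisorial valuation $v$ with $P \in \cen_X(v)$ and $v_{\num}(D+A_0')>0$. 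Since $v_{\num}(D) \geq v_{\num}(D+A_0')$ --- via the sup-definition if $D$ is non-big pseudo-effective, or via subadditivity (Proposition \ref{vnum basic}(3)) using $v_{\num}(A_0')=0$ for the ample divisor $A_0'$ if $D$ is big --- we obtain $P \in \NNef(D)$.

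The main obstacle is technical rather than conceptual: every auxiliary divisor on which Corollary \ref{Nakayama Q} is invoked must be a $\Q$-Cartier $\Q$-divisor, and the positivity of $v_{\num}$ must be preserved throughout. Both tasks are handled by perturbing within the $\Q$-Cartier-admissible affine translate of the ample cone, using continuity of $v_{\num}$ on the big cone and the vanishing $v_{\num}(A)=0$ on ample classes.
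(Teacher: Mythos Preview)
Your proof is correct and follows essentially the same approach as the paper: reduce to the big $\Q$-Cartier $\Q$-divisor case (Corollary~\ref{Nakayama Q}) by perturbing $D$ with small ample divisors. The only difference is presentational: the paper packages the reduction globally by choosing a single sequence $\{A_n\}$ of ample $\R$-divisors with $D+A_n$ $\Q$-Cartier and $A_n\to 0$, then invokes $\B_{-}(D)=\bigcup_n \B_{-}(D+A_n)$ (\cite[Proposition~2.2]{Mus}) and $\NNef(D)=\bigcup_n \NNef(D+A_n)$ to reduce both sides simultaneously, whereas you unpack each inclusion pointwise with an ad hoc ample perturbation.
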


\begin{proof}
	When $D$ is not pseudo-effective, $\NNef(D)=\B_{-}(D)=X$.
        Assume that $D$ is pseudo-effective.
	Let $\{A_n\}_{n \in \N}$ be a sequence of ample $\R$-Cartier $\R$-divisors on $X$ such that $D+ A_n$ is a $\Q$-Cartier $\Q$-divisor for every $n$ and $\lim_{n \to \infty}A_n= 0$ in $\rN^1(X)_\R$.
        
        Then we have $\B_{-}(D)= \bigcup_n \B_-(D+A_n)$ (\cite[Proposition 2.2]{Mus}) 
        and $\NNef(D)= \bigcup_n \NNef(D+A_n)$.        
        Hence we can reduce to the case when $D$ is a big $\Q$-Cartier $\Q$-divisor and this is Corollary \ref{Nakayama Q}.
\end{proof}

\begin{cor}[Theorem \ref{Nak}]\label{Nakayama}
Let $X$ be a normal projective variety over an $F$-finite field.
Assume that $Z(\tau(X))$ has dimension at most 0.
Then, for every $\R$-Cartier $\R$-divisor $D$ on $X$, we have
\[
\NNef(D)=\B_{-}(D).
\]
\end{cor}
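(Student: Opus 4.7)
The strategy is to combine Corollary \ref{Nak general}---which already gives $\NNef(D) \setminus Z(\tau(X)) = \B_-(D) \setminus Z(\tau(X))$---with a specialization argument made possible by the hypothesis $\dim Z(\tau(X)) \le 0$, which forces $Z(\tau(X))$ to be a finite set of closed points. Note that both $\NNef(D) = \bigcup_v \cen_X(v)$ and $\B_-(D) = \bigcup_A \B(D+A)$ are unions of closed subsets of $X$, hence are stable under specialization. We may assume that $D$ is pseudo-effective, otherwise both loci equal $X$ by convention.

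For the inclusion $\NNef(D) \subseteq \B_-(D)$, given $P \in \NNef(D)$ with witness divisorial valuation $v$, I would use either the definition $v_{\num}(D) = \sup_A v_{\num}(D+A)$ in the non-big case or the continuity statement following Proposition \ref{vnum basic} in the big case, to produce an ample $A$ with $D+A$ a $\Q$-Cartier $\Q$-divisor and $v_{\num}(D+A) > 0$. Since $D+A$ is big, Proposition \ref{vnum basic}(1) yields $v(||D+A||) > 0$, so every effective $E \sim_{\Q} D+A$ contains $\cen_X(v) \ni P$, giving $P \in \B(D+A) \subseteq \B_-(D)$.

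For the reverse inclusion $\B_-(D) \subseteq \NNef(D)$, Corollary \ref{Nak general} reduces the question to showing that every closed point $P \in \B_-(D) \cap Z(\tau(X))$ lies in $\NNef(D)$. I would choose an ample $A$ with $D+A$ a $\Q$-Cartier $\Q$-divisor and $P \in \B(D+A)$, and let $W$ be an irreducible component of the closed set $\B(D+A)$ containing $P$. When $\dim W \ge 1$, the generic point $\eta$ of $W$ is non-closed, so $\eta \notin Z(\tau(X))$ by the dimension hypothesis; applying Corollary \ref{Nak general} to $\eta \in \B_-(D) \setminus Z(\tau(X))$ gives $\eta \in \NNef(D)$, and stability of $\NNef(D)$ under specialization then forces $P \in \NNef(D)$.

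The main obstacle is the remaining case $\dim W = 0$, in which $\{P\}$ is an isolated component of $\B(D+A)$ and the specialization device provides no non-closed witness. My first move would be to shrink $A$ inside the ample cone: if for some ample $A' \le A$ with $D+A'$ rational the point $P$ falls inside a positive-dimensional component of $\B(D+A')$, we reduce to the previous case. Otherwise $P$ is a genuinely isolated point of $\B_-(D)$, and one must exhibit a divisorial valuation $v$ with $\cen_X(v) = \{P\}$ and $v_{\num}(D) > 0$; the natural candidate is $v = \ord_F$ for the exceptional divisor $F$ of a suitable (possibly iterated) blowup at $P$. The delicate step is proving that the asymptotic order of vanishing $v(||D+A||)$ at such a persistent isolated base component is strictly positive, at which point $v_{\num}(D) \ge v_{\num}(D+A) > 0$ gives $P \in \NNef(D)$.
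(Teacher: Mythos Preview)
Your overall architecture matches the paper's: use Corollary~\ref{Nak general} to reduce $\B_-(D)\subseteq\NNef(D)$ to the finitely many closed points of $Z(\tau(X))$, and then argue by specialization from a nearby non-closed point of some $\B(D+A)$. The gap is precisely the case you flag as the ``main obstacle'': when $\{P\}$ is an isolated component of $\B(D+A)$ you offer only a sketch (shrink $A$, or blow up at $P$ and hope the exceptional valuation has positive asymptotic order), and neither move is justified. In particular, proving that $\ord_F(||D+A||)>0$ at an isolated base point is not elementary; it is essentially equivalent to the statement you are trying to avoid.

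The missing input, which the paper invokes directly, is that \emph{stable base loci have no isolated points}: for any $\Q$-Cartier $\Q$-divisor $E$ on a projective variety, every irreducible component of $\B(E)$ has positive dimension (\cite[Proposition~1.1]{ELMNP2}). With this in hand your case $\dim W=0$ is vacuous, and the specialization argument you already wrote for $\dim W\ge 1$ finishes the proof. This is exactly how the paper proceeds: write $\B_-(D)=\bigcup_n \B(D+A_n)$, note that each $\B(D+A_n)$ has no isolated points, and combine with Corollary~\ref{Nak general}.
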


\begin{proof}
The proof is similar to the proof of \cite[Corollary 4.9]{CdB}.
	The inclusion $\NNef(D) \subseteq \B_{-}(D)$ is obvious.
Let $\{A_n\}_{n \in \N}$ be a sequence of $\R$-Cartier $\R$-divisors as in the proof of Corollary \ref{Nak general}.
Then we have $\B_{-}(D)= \bigcup_n \B(D+A_n)$ by \cite[Proposition 2.2]{Mus}. 
Since every $\B(D+A_n)$ has no isolated points (see \cite[Proposition 1.1]{ELMNP2}), we have the inclusion $\B_{-}(D) \subseteq \NNef(D)$ by Corollary \ref{Nak general}.
\end{proof}



\begin{thebibliography}{ELMNP06}

\bibitem[BBP13]{BBP}
S.~Boucksom, A.~Broustet, and G.~Pacienza, 
Uniruledness of stable base loci of adjoint linear systems via Mori Theory, 
Math. Z. \textbf{275} (2013), no. 1-2, 499--507.

\bibitem[BST15]{BST}
M.~Blickle, K.~Schwede, and K.~Tucker, 
F-singularities via alterations, 
Amer. J. Math. \textbf{137} (2015), no. 1, 61--109. 

\bibitem[BSTZ10]{BSTZ}
M.~Blickle, K.~Schwede, S.~Takagi and W.~Zhang, 
Discreteness and rationality of $F$-jumping numbers on singular varieties, 
Math. Ann. \textbf{347} (2010), no. 4, 917--949. 

\bibitem[CdB13]{CdB}
S.~Cacciola and L.~Di Biagio, 
Asymptotic base loci on singular varieties, 
Math. Z. \textbf{275} (2013), no. 1-2, 151--166.

\bibitem[CL14]{CL}
S.~Cacciola and A.~F.~Lopez, 
Nakamaye's theorem on log canonical pairs,
Ann. Inst. Fourier (Grenoble) \textbf{64} (2014), no. 6, 2283--2298.


\bibitem[ELMNP06]{ELMNP}
L.~Ein, R.~Lazarsfeld, M.~Musta\c{t}\v{a}, M.~Nakamaye and M.~Popa, 
Asymptotic invariants of base loci, 
Ann. Inst. Fourier (Grenoble) \textbf{56} (2006), no. 6,  1701--1734.

\bibitem[ELMNP09]{ELMNP2}
L.~Ein, R.~Lazarsfeld, M.~Musta\c{t}\v{a}, M.~Nakamaye and M.~Popa, 
Restricted volumes and base loci of linear series, 
Amer. J. Math. \textbf{131}(3) (2009), 607-–651.

\bibitem[Ful98]{Ful}
W.~Fulton, 
Intersection theory, second edition, 
Ergeb. Math. Grenzgeb. 3. Folge. A Series of Modern Surveys in Mathematics, 2, 
Springer-Verlag, Berlin, 1998.

\bibitem[Gab04]{Gab}
O.~Gabber, 
Notes on some t-structures, 
Geometric aspects of Dwork theory. Vol. II, Walter de Gruyter GmbH $\&$ Co. KG, Berlin, 2004, 711--734.

\bibitem[Har05]{Hara}
N.~Hara, 
A characteristic $p$ analog of multiplier ideals and applications, 
Comm. Algebra \textbf{33} (2005), no. 10, 3375--3388.

\bibitem[HT04]{HT}
N.~Hara and S.~Takagi,
On a generalization of test ideals, 
Nagoya Math. J. \textbf{175} (2004), 59--74.

\bibitem[KM98]{KM}
J.~Koll\'ar, S.~Mori, 
Birational geometry of algebraic varieties, 
Cambridge Tracts in Mathematics, 134. 
Cambridge University Press, Cambridge, 1998.

\bibitem[KS15]{KS}
K.~Kurano, K.~Shimomoto,
An elementary proof of Cohen-Gabber theorem in the equal characteristic $p>0$ case,
http://arxiv.org/abs/1510.03573 (2015).

\bibitem[Kun69]{Kun}
E.~Kunz, 
Characterization of regular local rings for characteristic p, 
Amer. J. Math. \textbf{91} (1969), 772--784.

\bibitem[Laz04]{Laz}
R.~Lazarsfeld,
Positivity in Algebraic Geometry II, 
Ergeb. Math. Grenzgeb. 3. Folge, A Series of Modern Surveys in Mathematics, 49, 
Springer-Verlag, Berlin, 2004. 

\bibitem[Li15]{Li}
C.~Li,
Minimizing normalized volumes of valuations,\\
https://arxiv.org/abs/1511.08164 (2015).

\bibitem[Mat89]{Mat}
H.~Matsumura, 
Commutative Ring Theory, 
Translated from the Japanese by M. Reid, Second edition, 
Cambridge Studies in Advanced Mathematics, 8, 
Cambridge University Press, Cambridge, 1989.


\bibitem[Mus13]{Mus}
M.~Musta\c{t}\v{a}
The non-nef locus in positive characteristic,
A celebration of algebraic geometry, 535--551, 
Clay Math. Proc. \textbf{18}, Amer. Math. Soc. Providence, RI, 2013.

\bibitem[Nak04]{Nak}
N.~Nakayama, 
Zariski-decomposition and abundance, 
MSJ Memoirs 14, 
Mathematical Society of Japan, Tokyo, 2004.

\bibitem[Ree89]{Ree}
D.~Rees, 
Izumi's theorem, 
Commutative algebra (Berkeley, CA, 1987), 407--416. 
Math. Sci. Res. Inst. Publ., 15. Springer, New York 1989.


\bibitem[Sch10]{Sch1}
K.~Schwede, 
Centers of $F$-purity,
Math. Z. \textbf{265} (2010), no. 3, 687--714. 

\bibitem[Sch11a]{Sch2}
K.~Schwede, 
Test ideals in non-$\Q$-Gorenstein rings, 
Trans. Amer. Math. Soc. \textbf{363} (2011), no. 11, 5925--5941.

\bibitem[Sch11b]{Sch3}
K.~Schwede, 
A note on discreteness of $F$-jumping numbers,
Proc. Amer. Math. Soc. \textbf{139} (2011), no.11, 3895--3901.

\bibitem[ST14a]{ST1}
K.~Schwede and K.~Tucker, 
On the behavior of test ideals under finite morphisms,
J. Algebraic Geom. \textbf{23} (2014), no. 3, 399--443.

\bibitem[ST14b]{ST2}
K.~Schwede and K.~Tucker, 
Test ideals of non-principal ideals: Computations, Jumping Numbers, Alterations and Division Theorems,
J. Math. Pures. Appl. \textbf{102} (2014), no. 05, 891--929.

\bibitem[STZ12]{STZ}
K.~Schwede, K.~Tucker, and W.~Zhang, 
Test ideals via a single alteration and discreteness and rationality of $F$-jumping numbers, 
Math. Res. Lett. \textbf{19} (2012), no. 01, 191--197.

\bibitem[Tak04]{Tak}
S.~Takagi,
An interpretation of multiplier ideals via tight closure, 
J. Algebraic Geom. \textbf{13} (2004), no. 2, 393--415.

\bibitem[TY08]{TY}
S.~Takagi and K.~Yoshida, 
Generalized test ideals and symbolic powers, 
Michigan Math. J. \textbf{57} (2008), 711--724.


\end{thebibliography}
\end{document}